\documentclass{amsart}
\usepackage{verbatim,amsfonts,color,mathrsfs,stmaryrd}
	\usepackage[pdftex]{graphicx}
	\usepackage{graphicx}



\theoremstyle{plain}
\newtheorem{Thm}{Theorem}
\newtheorem{Cor}{Corollary}[section]
\newtheorem{Lem}{Lemma}[section]
\newtheorem{Prop}{Proposition}[section]

\theoremstyle{definition}
\newtheorem{Def}{Definition}[section]
\newtheorem{Rmk}{Remark}[section]

\theoremstyle{remark}

\errorcontextlines=0


\textwidth=5.75in
\oddsidemargin=0in

\usepackage{hyperref}

\begin{document}

\keywords{continued fractions, Fuchsian groups, Diophantine approximation}
\subjclass[2000]{11J70, 11K50, 11J17, 11J81, 20H10}
\thanks{We thank the Hausdorff Research Institute for Mathematics and the mathematics department at Oregon State University}

\title{Continued fractions for a class of triangle groups}
\author{Kariane Calta}
\address{Vassar College} 
\email{kacalta@vassar.edu }

\author{Thomas A. Schmidt}
\address{Oregon State University\\Corvallis, OR 97331}
\email{toms@math.orst.edu}
\date{4 March 2011}

\ifpdf
	\DeclareGraphicsExtensions{.pdf, .jpg, .tif}
	\else
	\DeclareGraphicsExtensions{.eps, .jpg}
	\fi

\begin{abstract} We give continued fraction algorithms for each conjugacy class of triangle Fuchsian group of signature $(3, n, \infty)$, with $n \ge 4$.   In particular,  we give an explicit  form of the group that is a subgroup of the Hilbert modular group of its trace field and provide an interval map that is piecewise linear fractional, given  in terms of group elements.   Using natural extensions,  we find an ergodic invariant measure for the interval map.  We also study diophantine properties of approximation in terms of the continued fractions; and furthermore show  that these continued fractions are appropriate to obtain transcendence results.      

\end{abstract}

\maketitle

\tableofcontents

\section{Introduction}
The celebrated results of W.~ Veech \cite{V} highlighting the importance of translation surfaces with large affine diffeomorphism groups have lead to various uses of generalizations of regular continued fractions in Teichm\"uller theory.   One aspect has been the determination of explicit algorithms for expansions of the (inverse) slopes of flow directions in terms of parabolic fixed points of a related Fuchsian group,  \cite{AH, SU, SU2}.

Veech \cite{V} gave a family of translation surfaces  with related Fuchsian triangle groups  (of index at most two in a group) of signature  $(2,n,\infty)$  ---  the corresponding uniformized hyperbolic surface is of genus zero,  has torsion-singularities of type 2 and $n$, and has one puncture.    Some 40 years ealier D. ~Rosen \cite{R} gave continued fraction algorithms for approximation by elements in such triangle groups.    The connection between the two is exploited in \cite{AS}  to show that Veech's \cite{V} original examples of translation surfaces  with non-arithmetic lattice ``Veech'' group,  exactly those of genus greater than 2, have non-parabolic directions with vanishing Sah-Arnoux-Fathi-invariant.

Here we construct continued fraction algorithms for the triangle groups of signature $(3,n,\infty)$ that we show to have various desirable properties.  These groups were shown by C.~ Ward \cite{W} to also arise from the affine diffeomorphism group of translation surfaces.\\

For each $n$, Ward rather naturally presents his group with a generating element of order $n$ being the standard rotation of order $n$.   However,  the Fuchsian group that he gives does not lie in the matrix group with entries only in the algebraic integers.    The main properties of Veech groups are conjugation-invariant;   we explicitly determine a group that is conjugate to Ward's group,   but is contained in the group of matrices of algebraic integer entries, and also has the important property (for applications to translation surfaces) of being in ``standard form'' \cite{CS}:   the extended reals $0$, $1$ and $\infty$ are all cusps for the group.  

Having our explicit groups, we then find a rather natural continued fraction map for each $n$, but one that is of infinite invariant measure and fails to enjoy various desirable approximation properties.    The infinitude of the measure being due to the existence of a parabolic fixed point for the map in the interval,   we define a second algorithm by inducing appropriately with respect to the domain of the corresponding parabolic element.   It is this second continued fraction map, $ f= f_n$,  that we then show to  enjoy desirable properties.   In particular, we show that it has no long sequences of poor approximation and that it detects transcendence.

\subsection{Main results}   
For each $n$, let $f(x) = f_n(x)$ as given below in Definition ~\ref{d:ourF}.    In a standard manner, to each $x$, our interval map generates  a sequence of {\em $f$-approximants}, $p_m/q_m$.    We say that $x$ is  {\em $f_n$-irrational} if it has an infinite sequence of approximants.   For such $x$,  its sequence of {\em Diophantine approximation coefficients} is defined as 
\[\Theta_m = \Theta_m(x) =  q_{m}^{2} \, | x - p_m/q_m |\,.\]

Fix $\tau = 1 + 2 \cos \pi/n$.   Let $\nu$ be the probability measure induced on $\mathbb I = \mathbb I_n = [- \tau, 0)$ given as the marginal measure, by integrating $d \mu = (1+xy)^{-2}dx dy$ along the fibers of the region $\Gamma$ defined in  Definition ~\ref{d:Gamma}.  
 
\begin{Thm}\label{t:dioAndErgProps}   For each $n \ge 4$, the following hold: 
\begin{itemize} 
\item[(i.)]  Every $f$-irrational $x$ is the limit of its $f$-approximants:
\[ \lim_{m\to \infty} | \, x - p_m/q_m\,| = 0\,.\]

\medskip 
\item[(ii.)]   For every $f$-irrational $x$ and every $m \ge 1$,  
\[\min\{\Theta_{m-1}, \dots,  \Theta_{m+n-1}\} \le \tau\,,\]
and the constant $\tau$ is best possible.  

\medskip 
\item[(iii.)]  $f$ is ergodic with respect to the finite invariant measure $\nu$ on $\mathbb I$. 
 \end{itemize} 
\end{Thm}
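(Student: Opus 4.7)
The plan is to introduce a two-dimensional natural extension $F\colon \Gamma\to\Gamma$ of $f$, where $\Gamma$ is the planar region of Definition~\ref{d:Gamma} and each branch of $F$ is an element of $\mathrm{PSL}(2,\mathbb R)$ acting by a linear fractional transformation on $x$ with a compatible dual action on $y$. The measure $d\mu=(1+xy)^{-2}\,dx\,dy$ is then preserved automatically, and $\nu$ is recovered as the $x$-marginal of $\mu/\mu(\Gamma)$. With this setup, both the ergodic claim in (iii) and the Diophantine claims in (i)--(ii) translate into statements about the $F$-orbit $(x_m,y_m):=F^m(x_0,y_0)$, whose $y$-coordinate encodes (up to sign) the ratio $q_{m-1}/q_m$.

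For (iii) the first step is to show that $F$ is bijective modulo $\mu$-null sets, which reduces to verifying that the images of the cylinders of $f$ under the branches of $F$ exactly tile $\Gamma$. Ergodicity of $F$ can then be obtained either by realizing $(\Gamma,F,\mu)$ as a Poincar\'e cross-section for the geodesic flow on $\csurf$ and invoking Hopf/Moore, or directly through a R\'enyi--Schweiger bounded-distortion argument establishing exactness of $f$; ergodicity of an exact map's natural extension descends to ergodicity of the base, yielding (iii).

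For (i) and (ii) the key ingredient is the classical identity
\[
\Theta_m(x)=\frac{1}{|\,c(x_m,y_m)\,|},
\]
where $c$ is an explicit linear form in the natural-extension coordinates arising from the matrix product that codes $x$. Part (i) then follows because boundedness of $|y|$ on $\overline{\Gamma}$ bounds $\Theta_m$, while $|q_m|\to\infty$ is forced by the uniform expansiveness of $f$ on each cylinder (a consequence of the inducing that removed the parabolic fixed point), so $|x-p_m/q_m|=\Theta_m/q_m^2\to 0$. Part (ii) reduces to showing that the subregion $\Gamma_\tau\subset\Gamma$ defined by $|c(x,y)|<1/\tau$ cannot support an $F$-orbit of length $n+1$: this I would prove by a direct piecewise-geometric analysis, using that the order-$n$ elliptic generator permutes the pieces of $\Gamma$ in a controlled cyclic manner that forces escape from $\Gamma_\tau$ within $n$ steps. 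The best-possible claim would be witnessed by an explicit $f$-periodic point constructed from that elliptic generator whose $\Theta$-sequence attains the value $\tau$ exactly on a period of length $n$.

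The main obstacle is the bijective-tiling verification for $F$ on $\Gamma$. For the Rosen $(2,n,\infty)$ groups such a tiling is well understood, but in the $(3,n,\infty)$ case the interaction between the order-$3$ rotation and the parabolic induction makes the combinatorial structure of $\Gamma$ more intricate, and checking piece by piece that the $F$-images reassemble $\Gamma$ is the most delicate step. Once this tiling is in place, (iii) follows from standard natural-extension machinery and (i)--(ii) from the $\Theta_m$ identity combined with the geometry of $\Gamma$.
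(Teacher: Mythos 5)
Your overall architecture --- a natural extension $\mathcal T$ of $f$ on the region $\Gamma$ preserving $d\mu=(1+xy)^{-2}\,dx\,dy$, bijectivity checked by a piece-by-piece tiling, ergodicity via bounded distortion, and a ``forbidden region'' analysis in the $(t,v)$-plane for part (ii) --- is the paper's. But two of your steps contain genuine gaps. For (i) you take the traditional route ($\Theta_m$ bounded, plus $q_m\to\infty$ forced by ``uniform expansiveness of $f$''). The map $f$ is \emph{not} uniformly expanding: on the branches $f=A^{-k}C$ one has $|f'(x)|=1/x^2<1$ for $x\in(-\tau,-1)$, and on the accelerated branches $W^{j}$ the derivative is only $\ge 1$ (equal to $1$ at the parabolic fixed point $-\tau$, an endpoint of the accelerated interval). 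Correspondingly $q_m/q_{m-1}=1/v_m$ can be less than $1$, since the heights of $\Gamma$ exceed $1$; the paper explicitly remarks that the $q_m$ are not monotone, so $q_m\to\infty$ needs a separate argument that you do not supply. The paper avoids the issue entirely: by Lemma~\ref{l:relTheta} the ratio of consecutive errors $|x-p_m/q_m|\,/\,|x-p_{m-1}/q_{m-1}|$ equals $|t_m v_m|$, which is at most some $\delta<1$ on every branch of the form \eqref{e:restriction} because $\Gamma$ is bounded away from the hyperbola $y=-1/x$, and is still at most $1$ on the isolated $W^{j}$-steps; geometric decay of $|x-p_m/q_m|$ follows with no reference to $q_m$ at all.

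The same non-expansiveness undermines your route to (iii): a R\'enyi/Adler bounded-distortion argument cannot be applied to $f$ directly, since $\inf|f'|\le 1$. The paper induces a \emph{second} time, onto $Y=[1/(1-2\tau),0)$, the union of the cylinders of digit at least $3$, and verifies Adler's four conditions for the induced map $f_Y$; the expansion estimate $|f_Y'|>1$ is the delicate one, resting on the identity $M_1^{-j}\cdot 0=-B_j/B_{j+1}$ with $B_j=\sin(j\pi/n)/\sin(\pi/n)$, and the distortion bound (iv) is controlled by the fact that $\Gamma$ is bounded away from $tv=-1$. Weak Bernoullicity of $(Y,f_Y)$ then yields ergodicity of $f$ and of $\mathcal T$. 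Your plan for (ii) does match the paper's (the region of large $\Theta$-values projects into $\Delta'_1$, and $B^n=\mathrm{Id}$ caps the number of consecutive visits, with optimality witnessed by periodic orbits built from $M_1^{n-3}W^{j}M_2$), except for one inaccuracy: the constant $\tau$ is best possible as a \emph{limit} --- the periodic points $P_j$ satisfy $\theta(P_j)<\tau$ with $\theta(P_j)\to\tau$ as $j\to\infty$ --- and is not attained exactly by any single periodic orbit as you assert.
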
 

One test of a usefulness of a continued fraction algorithm is that extremely rapid growth of the denominators $q_m$ of the approximants implies transcendence.   We show that our continued fractions  pass this test.

\begin{Thm}\label{t:Transc}  Let $\lambda= 2\cos(\pi/n)$ for any integer $n\ge 4$ and let $d =[\mathbb Q(\lambda): \mathbb Q]$. If a real number $\xi \notin Q(\lambda)$  is $f$-irrational with convergents $p_m/q_m$ such that 

\[ \limsup_{m \to \infty} \frac{\log \log q_m}{m} > \log(2 d-1) \] 

then $\xi$ is transcendental. 

\end{Thm}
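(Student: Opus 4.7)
\textbf{Proof proposal for Theorem \ref{t:Transc}.}

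The plan is to argue by contradiction using a Schmidt-subspace-theorem-type transcendence criterion adapted to approximation of an algebraic real by elements of a fixed number field. Assume $\xi$ is algebraic.

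\emph{Step 1 (approximation rate).} From the construction of $f$ and standard continued-fraction manipulations (supported by Theorem~\ref{t:dioAndErgProps}(i)), the universal bound $|\xi - p_m/q_m| \leq c\,(q_m q_{m+1})^{-1}$ holds. The hypothesis $\limsup_m \log\log q_m/m > \log(2d-1)$ extracts a subsequence along which $q_{m+1} \geq q_m^{\,2d-1+\eta}$ for some fixed $\eta > 0$; hence
\[|\xi - p_m/q_m| \leq c\,q_m^{-(2d + \eta)}\]
along that subsequence.

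\emph{Step 2 (a lower bound for algebraic $\xi$).} Because $p_m/q_m \in K = \mathbb{Q}(\lambda)$ and $\xi \notin K$, Schmidt's subspace theorem, applied to an appropriate system of linear forms at the $d$ archimedean places of $K$, yields a lower bound of the form
\[|\xi - p_m/q_m| \geq c_\epsilon\,q_m^{-(2d + \epsilon)}\]
for every $\epsilon > 0$ and all $m$ large enough. The exponent $2d + \epsilon$ arises from combining the basic Roth-type input $2+\epsilon$ at the distinguished archimedean place with contributions from the $d-1$ other archimedean places; at each of these, one bounds the Galois conjugates $\sigma(p_m)$, $\sigma(q_m)$ using the embedding of $\Gamma$ into the Hilbert modular group of $K$ in ``standard form'' as arranged in the introduction. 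Concretely, since the conjugate matrices act on other copies of $\mathbb{H}$ with orbits contained in a bounded fundamental region, one obtains uniform bounds on $|\sigma(p_m)|, |\sigma(q_m)|$ for $\sigma \neq \mathrm{id}$, which, fed into the subspace-theorem calculation, produce the stated exponent.

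\emph{Step 3 (contradiction).} Choosing any $\epsilon < \eta$, Steps 1 and 2 give incompatible bounds on $|\xi - p_m/q_m|$ along the subsequence once $q_m$ is large enough. This contradicts the assumption that $\xi$ is algebraic, so $\xi$ is transcendental.

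The main obstacle is Step 2: setting up the linear forms at the non-identity archimedean places and bounding the Galois conjugates of $p_m$ and $q_m$ there. The fact that $\Gamma$ is in standard form and lies inside the Hilbert modular group of $K$ is essential --- it allows one to control the action of the conjugate matrices on the other $\mathbb{H}$-factors --- and the sharpness of the hypothesis $\log(2d-1)$ reflects the precise balance between the CF rate $q_m^{-(2d+\eta)}$ from Step 1 and the subspace-theoretic lower bound $q_m^{-(2d+\epsilon)}$ from Step 2, calibrated so that no slack remains at exponent $2d$.
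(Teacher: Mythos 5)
Your overall architecture matches the paper's: combine the upper bound $|\xi - p_m/q_m| \ll (q_m q_{m+1})^{-1}$ (your Step 1 is exactly Lemma~\ref{Lem:qineq}, and your subsequence extraction is the contrapositive of the paper's iteration of $q_{m+1} \ll q_m^{d(2+\epsilon)-1}$, which also needs the observation $q_{m+1}\ge q_m/\tau$) with a Roth-type lower bound for approximation of an algebraic $\xi \notin K$ by elements of $K = \mathbb{Q}(\lambda)$, the exponent $2d$ coming from control of the Galois conjugates of $p_m$ and $q_m$. The paper packages that lower bound as the Roth--LeVeque theorem, $|\xi - \alpha| \gg_\epsilon H(\alpha)^{-(2+\epsilon)}$ for $\alpha \in K$, together with the height estimate $H(p_m/q_m) \le k\,q_m^d$ of Lemma~\ref{l:heightBd} (imported from \cite{BHS}); your subspace-theorem formulation is an admissible alternative in principle.

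However, Step 2 --- which you rightly identify as the crux --- is justified incorrectly, and the error is not cosmetic. You claim ``uniform bounds on $|\sigma(p_m)|,|\sigma(q_m)|$ for $\sigma\neq\mathrm{id}$'' on the grounds that the conjugate matrices have orbits contained in a bounded fundamental region. This is false: the conjugate groups $\sigma(\mathcal W_n)$ are in general not discrete, and the conjugate entries grow without bound. Worse, if they were uniformly bounded, the height of $p_m/q_m$ would be $O(q_m)$ rather than $O(q_m^d)$, the lower bound would have exponent $2+\epsilon$, and the threshold in the theorem would be $\log 1=0$ instead of $\log(2d-1)$; so the mechanism you describe is inconsistent with the exponent you assert. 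What is actually true, and what the proof requires, is the weaker \emph{domination} statement $|\sigma(q_m)|\ll q_m$ (and likewise for $p_m$). This is where the specific nature of these groups enters: the Schmutz Schaller--Wolfart and Cohen--Wolfart results quoted in the paper give $|\mathrm{tr}(M)|\ge|\sigma(\mathrm{tr}(M))|$ for hyperbolic $M\in\mathcal W_n$ (equivalently, the equivariant holomorphic map of the modular embedding contracts hyperbolic displacement, whence $\|\sigma(A_m)\|\ll\|A_m\|$), from which $H(p_m/q_m)\ll q_m^d$ follows. Without this input your Step 2 does not close; with it, the subspace theorem is unnecessary and one may quote Roth--LeVeque directly, as the paper does.
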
 

\bigskip

\section{Background} 

\subsection{Fuchsian groups, translation surfaces, Veech groups}

The motivation for developing the continued fraction algorithms in this paper is their usefulness in analyzing the dynamics of the linear flow on certain {\em translation surfaces}.  A translation surface is a collection of planar polygons glued along parallel sides in such a way that the result is a closed, oriented surface of some genus greater than or equal to one.  At the vertices of the polygons, cone points may arise, about which the total angle is $2\pi n$ for some $n \geq 2$. A first example of a translation surface is a torus that arises from gluing parallel sides of a square, although a torus doe not have cone points.  It is a theorem of Weyl that on the standard unit torus, in any direction of rational slope, all orbits of the geodesic flow are closed or periodic, while in a direction of irrational slope, all orbits are uniformly distributed.  Veech  \cite{V} proved that on a certain class of translation surfaces of genus greater than one, now known as the {\it lattice surfaces}, the geodesic flow enjoys a simple dynamic dichotomy similar to that of the torus.  A lattice surface is one for which the stabilizer of the surface under the natural action of $\text{PSL}_2(\mathbb R)$  is a lattice subgroup of $\text{PSL}_2 (\mathbb R)$.  Such a group, which is called a {\em Veech group},  is also the group of derivatives of the affine diffeomorphisms of the corresponding surface.  Since Veech groups are  discrete groups of isometries of the hyperbolic plane, they are also Fuchsian groups.   Veech also constructed examples of lattice surfaces by gluing together a pair of regular $2n$-gons, and found the Fuchsian groups mentioned in the Introduction.

Much work has been dedicated to finding new examples of lattice surfaces; we focus on those constructed by Ward \cite{W}.  These surfaces arise from reflected copies of a triangle  with angles $\pi/2n,$ $\pi/n$ and $(2n-3)\pi/2n$ for $n \geq 4$  
and their Veech groups are Fuchsian triangle groups of signature $(3, n, \infty)$.   In this paper, we create continued fraction algorithms for the Veech groups of the Ward surfaces in the hope of using these algorithms to further study the behavior of the geodesic flow in certain directions on the Ward surfaces.

\subsection{Standard number theoretic planar extension}\label{ss:BckgrdNatExt}  
 The notion of natural extension was introduced by Rohlin \cite{Roh} to aid in the study of ergodic properties of transformations.      Since the work of Ito, Nakada and Tanaka \cite{INT}, and the initial application of their work by Bosma, Jager and Wiedijk \cite{BJW},   natural extensions have become an important tool in the study of Diophantine approximation properties of continued fractions as well.   For examples  of applications to the setting of Rosen continued fractions and the Hecke groups, see say 
\cite{BKS, DKS, KSS, Na}.

\subsubsection{Matrix formulation and approximants} 
Suppose that $f: J \to J$ is an interval map that is piecewise fractional linear.  Say that $J = \cup_{k=1}^{\infty} \, \Delta_k$ with $f(x) = M_k\cdot x$ for $x \in J_k$, and $M_k \in \text{SL}_2(\mathbb R)$ acting projectively.   
Thus, given general $x$ and $m$,   
\[ f^m(x) = M_{k_m} \cdots M_{k_2}  M_{k_1} \cdot x\,\]
and we define 
\[\begin{pmatrix} q_m & - p_m\\-q_{m-1} &p_{m-1}\end{pmatrix}  := M_{k_m} \cdots M_{k_2}  M_{k_1}\,.\]
We define the $m^{\text{th}}$ {\em approximant} of $x$  as $p_m/q_m$.  
Note that we have 
\begin{equation}\label{e:xAndConvMatrix} x = \dfrac{p_{m-1} f^m(x) + p_m}{q_{m-1} f^m(x) + q_m}
\end{equation}

\bigskip 

\subsubsection{Two dimensional map} 
 Let 
$N_k = \begin{pmatrix} 0&-1\\1  &0 \end{pmatrix} M_k \begin{pmatrix} 0&-1\\1  &0 \end{pmatrix}$,  so that \\
$N_k\cdot y = -1/(M_k\cdot(-1/y))$.   
For $x \in \Delta_k$, let 
\begin{equation} \label{e:howTis}
\mathcal T: (x,y) \mapsto  (M_k\cdot x, N_k \cdot y)\,.
\end{equation}

Then 
\[ 
\begin{aligned}
\mathcal T^m(x,0) &= \bigg(\, f^m(x),  \dfrac{-1}{ M_{k_m} \cdots M_{k_2}  M_{k_1} \cdot \infty}\,\bigg)\\
\\
                              &= (\, f^m(x), \, q_{m-1}/q_m \,)\,.
\end{aligned}
\]
\medskip

\noindent
Thus, in accordance with say \cite{BKS} (see p.~1268 there), 
we can define the elements of the $\mathcal T$-orbit of $(x,0)$ as
\begin{equation} \label{e:defTmVm}
(t_m, v_m) :=  (f^m(x),  \, q_{m-1}/q_m)\,.
\end{equation}

\bigskip 
\subsubsection{Constants of Diophantine approximation}
We define the {\em coefficient of Diophantine approximation} 
\[\Theta_m := \Theta_m(x) = q_{m}^2\vert\, x - p_m/q_m\vert\,.\]
From Equation ~\eqref{e:xAndConvMatrix}, we have 

 \begin{equation}\label{e:thetaM}
 \Theta_m = q_{m}^2\;  \bigg\vert\,  \dfrac{p_{m-1} f^m(x) + p_m}{q_{m-1} f^m(x) + q_m} - \dfrac{p_m}{q_m}\bigg\vert\;\;= \, \bigg\vert\,  \dfrac{t_m}{1 + t_m v_m}\,\bigg\vert\,. 
\end{equation}

If we {\em further restrict} our matrices $M_k$ to be of the form 
\begin{equation}\label{e:restriction} 
M_k = \begin{pmatrix} a_k & -1\\1 &0\end{pmatrix}\,,
\end{equation}
then $t_{m+1} = a_m - 1/t_m$ and $v_{m+1} = -1/(v_m + a_m)$, and hence also 
\begin{equation}\label{e:thetaMbis} 
\Theta_m = \bigg\vert\,  \dfrac{v_{m+1}}{1 + t_{m+1} v_{m+1}}\,\bigg\vert \,.
\end{equation} 
 
\subsubsection{Natural extension}  
The natural extension of a system $(f, \mathbb I,  \nu,\mathscr B)$,  where $f: \mathbb I \to \mathbb I$ is of invariant measure $\nu$ and 
$\mathscr B$  is the $\sigma$-algebra of Borel sets, is another system 
$(\mathcal T, \Omega,  \mu,\mathscr B')$. Here, $T$ bijective on $\Omega$, and the $\sigma$-algebra $\mathscr B'$ is generated by the pull-back of $\mathscr B$ under a projection $\pi: \Omega \to \mathbb I$. 

When $f$ and $\mathcal T$ are as above, one has the following key result of \cite{BJW}, deftly reproven (in the classical case) by H. ~ Jager in \cite{J}.   Kraaikamp \cite{K} and Barrionuevo {\em et al} \cite{BBDK} apply Jager's reasoning for other continued fraction type maps. 

\begin{Prop}\label{p:unifDistrib}   Suppose that $(\mathcal T, \Omega,  \mu,\mathscr B')$ is the natural extension of $(f, \mathbb I,  \nu,\mathscr B)$
and that this natural extension is ergodic and given locally as in \eqref{e:howTis}, with $ d \mu = dx dy/(1+ x y)^2$, up to normalizing factor.   Then for   $\nu$-almost every  $x\in\mathbb I$,  the sequence $\big(\, \mathcal T^m(x,0)\,\big)_{m \ge 0}$ is $\mu$-uniformly distributed in $\Omega$. 
\end{Prop}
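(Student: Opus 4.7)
The plan is to apply Birkhoff's ergodic theorem to the natural extension $(\mathcal T, \Omega, \mu, \mathscr B')$ and then transfer the resulting equidistribution from a $\mu$-generic starting point $(x, y)$ to the specific point $(x, 0)$ by a contraction argument in the fiber direction. This is the approach of Jager \cite{J}, subsequently adapted by Kraaikamp and by Barrionuevo \emph{et al} in the references cited before the proposition.

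First I would use ergodicity of $\mathcal T$ together with Birkhoff's theorem to produce a $\mu$-conull set $\Omega' \subset \Omega$ such that for every $(x,y) \in \Omega'$ the orbit $\bigl(\,\mathcal T^m(x,y)\,\bigr)_{m\ge 0}$ is $\mu$-uniformly distributed. Disintegrating $\mu$ along the projection $\pi : \Omega \to \mathbb I$ (whose push-forward is $\nu$ by the definition of $\nu$ as a marginal measure), I would deduce that for $\nu$-a.e. $x$ the fiber $\{y : (x,y) \in \Omega'\}$ carries full conditional measure, and in particular is non-empty; fix such an $x$ and any $y$ in that fiber.

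Next comes the comparison of the two orbits. Because the matrix index $k_m$ in \eqref{e:howTis} is determined by the first coordinate alone, the orbits of $(x,0)$ and $(x,y)$ use the same sequence $M_{k_1}, M_{k_2}, \dots$, and therefore share first coordinates for all time; one computes directly using the identification of $M_{k_m}\cdots M_{k_1}$ with $\begin{pmatrix} q_m & -p_m\\-q_{m-1} & p_{m-1}\end{pmatrix}$ and the conjugation relating $N_k$ to $M_k$ that
\[ \mathcal T^m(x,y) = \Bigl(\,f^m(x),\,\dfrac{q_{m-1} + p_{m-1}\, y}{q_m + p_m\, y}\,\Bigr)\,, \]
so that the gap between the second coordinates is $\pm y/\bigl(q_m(q_m + p_m\, y)\bigr)$, courtesy of $q_m p_{m-1} - q_{m-1} p_m = \pm 1$. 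Since $q_m \to \infty$ by part (i) of Theorem~\ref{t:dioAndErgProps} (applied together with standard denominator growth of piecewise fractional linear expansions), this gap tends to $0$, and in fact uniformly in $y$ running over any bounded fiber. Hence $d\bigl(\mathcal T^m(x,0),\,\mathcal T^m(x,y)\bigr) \to 0$.

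Finally I would conclude equidistribution of the orbit of $(x,0)$ from equidistribution of the orbit of $(x,y)$. For any bounded uniformly continuous test function $\phi$ on $\Omega$ the Cesàro averages of $\phi$ along the two orbits differ by an amount tending to $0$, so they share the same limit $\int \phi \, d\mu/\mu(\Omega)$; a standard truncation of $\Omega$ to a compact subset carrying all but $\varepsilon$ of $\mu$ (using finiteness of $\mu$) extends the conclusion to all bounded continuous $\phi$. The main obstacle here is making this last step watertight in the non-compact setting: one must ensure the uniform-continuity approximation respects both the $\mu$-measure of the excluded region and the orbit's time spent there, which again is handled by the Birkhoff averages for the indicator of the complement of the compact set. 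The algebraic identity driving the contraction in step two is short, but it is the real engine of the proof — everything else is either the standard ergodic theorem or a routine approximation argument.
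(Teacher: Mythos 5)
Your proof is correct and is essentially Jager's argument as reproduced in the paper: ergodicity plus Birkhoff gives $\mu$-a.e.\ equidistribution in $\Omega$, the explicit fiber contraction transfers equidistribution along a fiber down to the point $(x,0)$, and the marginal relation between $\mu$ and $\nu$ converts the $\mu$-conull set into a $\nu$-conull one (the paper merely runs the identical argument contrapositively, via the set $A$ of bad $x$). The one soft spot is your appeal to Theorem~\ref{t:dioAndErgProps}(i) for $q_m\to\infty$ --- that statement does not literally imply it, since the $q_m$ are algebraic integers rather than rational integers --- but the paper's own proof asserts the null-sequence property with even less justification, and your treatment of the non-compactness of $\Omega$ in the final approximation step is more careful than the original.
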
 
 Jager's proof is so short and attractive that we sketch it here.    
 
 \begin{proof}(Jager)     Let $A$ be the set of $x$ such that the orbit of $(x,0)$ is not $\mu$-uniformly distributed.  Let $\mathcal A  = \{\,(x,y) \in \Omega\,\vert\, x \in A\,\}$, then  for all $(x, y), (x, y') \in \mathcal A$  due to the very definition of $\mathcal T$,  we have that  $\big(\, \mathcal T^m(x,y) - \mathcal T^m(x,y')\,\big)$ is a null sequence (that is, has the point $(0,0)$ as its limit).      Therefore,  for all $(x, y) \in \mathcal A$,   the orbit of $(x,y)$ is not $\mu$-uniformly distributed.    But,  the $\mu$ measure of $\mathcal A$ is  positive if and only if the $\nu$-measure of $A$ is positive.   Since $\mathcal T$ is ergodic,  we conclude that $\nu(A) = 0$.
\end{proof}

\section{Continued fractions for the Ward Examples}

\subsection{A representative in $\text{PSL}_2 (\mathcal O_K)$}

For each $n \ge 4$,  Ward \cite{W}  shows that the translation surface obtained by the unfolding process on the Euclidean triangle of angles $(\pi/2n, \pi/n, (2n-3)\pi/2n)$  with $n\ge 4$,  has as its Veech group a Fuchsian group of signature $(0; 3, n, \infty)$.       For $n>4$,  Ward presents his groups with generators 
\[ \sigma_n = \begin{pmatrix} 1& \cot \pi/(2 n) + \cot \pi/n\\
                                                  0  &1 \end{pmatrix}, \; 
                         \beta_n = \begin{pmatrix}  \cos \pi/n &\sin \pi/n\\
                                                  -\sin \pi/n  &\cos \pi/n \end{pmatrix}\;.
 \]
 
Ward proves that $\sigma_n$ and $\beta_n$ generate a lattice in $\text{PSL}_2(\mathbb R)$; however,  it is easy to see that $\langle \sigma_n, \beta_n \rangle$ does not lie in $\text{PSL}_2(\mathcal{O}_K)$ where $K=\mathbb Q(2\cos(\pi/n))$ is the trace field of the surface.  For certain applications of our continued fraction algorithms to translation surfaces, it is desirable to find a conjugate lattice that does lie in $\text{PSL}_2( \mathcal{O}_K)$.  Moreover, we would also like for the corresponding conjugated surface to be in {\em standard form}, which means that the directions $0$, $1$ and $\infty$ on the surface are algebraically periodic, as defined in \cite{CS}; this is guaranteed by  $0$, $1$ and $\infty$ being cusps of the group.  Calta and Smillie proved that for any translation surface whose Veech group is a lattice and which is in standard form, the set of algebraically periodic directions forms a field that is the trace field of the surface, \cite{CS}.  In this case, one could hope to use number theoretic results to study the algebraically periodic directions on a lattice surface, in the manner of the recent work of Arnoux and Schmidt for the  original Veech examples $\mathcal V_q$. \cite{AS}.

Conjugating Ward's group by  $\begin{pmatrix}   1 & \cos \pi/n\\0& \sin \pi/n\end{pmatrix}$,  we find a subgroup of $\text{PSL}_2 (\mathcal O_K)$ with generators   

\[ A = \begin{pmatrix} 1& 1 + 2 \cos\pi/n\\
                                      0&1\end{pmatrix},\, B = \begin{pmatrix} 2 \cos \pi/n& 1\\
                                      -1&0\end{pmatrix},\,   C = \begin{pmatrix} 1& -1\\
                                      1&0\end{pmatrix}\,.
\] 
Note that $AB = - C$.    Let us denote this group by $\mathcal W = \mathcal W_n = \langle A, B\rangle \,$.  Finally, we check that conjugated surface is in standard form.  First, observe that $\infty$ is clearly a cusp of each $\mathcal{W}_n$.  And since $B$ sends $0$ to $\infty$, it too is a cusp.  Also, since $C$ sends $\infty$ to $1$, $1$ must be a cusp as well.

\subsection{An initial interval map, but of infinite measure}  
 
   Fix an $n$ and set   $\tau = 1 + 2 \cos \pi/n$.  Considering the graph of $C\cdot x$, it is reasonable to let $\mathbb I = \mathbb I_n = [ -\tau, 0)$ and define 
\[ 
\begin{aligned} 
g: \mathbb I  &\to \mathbb I \\
           x        &\mapsto A^{-k}C\,,
\end{aligned}           
\]
where $k = k(x)$ is the unique positive integer such that $g(x) \in \mathbb I$.         Notice that $g(x) = -k \tau + 1 - 1/x$.      In terms of Section ~\ref{ss:BckgrdNatExt},  we have $M_k = A^{-k}C$ and thus the restriction as given in Equation \eqref{e:restriction} holds. 
The rank one {\em cylinders} $\Delta_k =[ \frac{1}{1- (k-1)\tau},\frac{1}{1- k\tau})$, with $k \ge 2$ are {\em full cylinders} --- $g$ sends each surjectively onto $\mathbb I$.   We have $\Delta_1 = [\, -\tau,  1/(1-\tau)\,)$, whose image under $f$ is $[-\tau + 1 + 1/\tau,0)$.  
The $g$-orbit of $x = -\tau$ is of importance,  thus let 
\[\phi_j = g^{j}(-\tau)\;  \text{for}\; j = 0, 1, \dots\, .\]
 
 
We claim that $\phi_0, \dots , \phi_{n-3}$ all lie in $\Delta_1$;  then $\phi_{n-2} \in \Delta_2$;  followed by $\phi_{n-1}, \dots, \phi_{2n-5}$  back in $\Delta_1$;  thereafter,  $\phi_{2n-4} = 1/(1-\tau)$ is the left endpoint of $\Delta_2$.   It follows that $\phi_{2n-3} = \phi_{0}$.    To justify that this is indeed the $g$-orbit of $\phi_0$,  since   $A^{-1}C\cdot x$ is increasing and has no pole in $\mathbb I$,  it suffices to show that (1) $\phi_{n-2} = (A^{-1}C)^{n-2}\cdot (-\tau) <  (A^{-2}C)^{-1}\cdot 0 = 1/(1-2 \tau)$;  (2)   $A^{-2}C(\phi_{n-2}) \in \Delta_1$; (3)  $(A^{-1}C)^{n-3}\cdot (\phi_{n-1}) \in \Delta_2$, and (4)   $W \cdot (-\tau) = -\tau$, where 
\begin{equation}\label{e:Wdefd}  W = A^{-2}C (A^{-1}C)^{n-3} A^{-2}C (A^{-1}C)^{n-2} = A^{-1}B^{-2} A^{-1} B^{-1}\ = \begin{pmatrix}\tau^2 +1&\tau^3\\ -\tau& - \tau^2 +1\end{pmatrix}\,.
\end{equation}
These are all easily shown,  especially since $(A^{-1}C)^{n} = B^n = \text{Id}$, projectively.

Note that from the above, $\phi_{2n-4} < \phi_{n-2}$. But then using that $M_1\cdot x$ is an increasing function, we easily deduce that the  following ordering of real numbers holds:  
\[ \phi_0 < \phi_{n-1} < \phi_1 < \cdots < \phi_{2n-4} < \phi_{n-2}\,.\]

Now consider rectangles erected above the $g$-orbit of $\phi_0$.   Enumerate their heights as  $L_1, \dots, L_{2n-2}$ and $R$, so  that the region we are considering, see Figure ~\ref{natExtFig},  is 
\[ \Omega = \bigcup_{i=0}^{n-3} \, [\phi_i, \phi_{n-1 +i}) \times [0,L_{2 i + 1}] \,  \cup \, \bigcup_{j=1}^{n-2} \, [\phi_{n-2+j}, \phi_{j}) \times [0,L_{2 i}] \,  \cup \,  \bigcup \,[\phi_{n-2},0) \times [0,R]\,.   \]

In accordance with the notation of Section ~\ref{ss:BckgrdNatExt}, one has $N_k = \begin{pmatrix} 0&-1\\1& 1- k \tau\end{pmatrix}$.   We define $\mathcal S(x,y) = (M_k \cdot x, N_k \cdot y)$ whenever $x \in \Delta_k\,$.

\begin{Prop}\label{p:bijective}   The map $\mathcal S: \Omega \to \Omega$ is bijective (up to $\mu$-measure zero) when 
$R=\tau$ and  
\[
\begin{aligned}  L_1 = \dfrac{1}{\tau}\,;\;\;\; \;\;& L_{2i+1} =  N_1 \cdot L_{2i-1}, \; 1 \le i \le n-3; 
\\
       L_{2} = \dfrac{1}{\tau-1}\,;\;\;\; \;\;&  L_{2j} =  N_1 \cdot L_{2j-2}, \; 1\le j \le n-2\;.
\end{aligned}
 \] 
 
 \medskip 
 Furthermore,  these heights are in increasing order
 \[ L_1 < L_2 < \cdots < L_{2n-2} < R\,.\]
\end{Prop}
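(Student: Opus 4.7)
The plan is to prove the proposition in two parts: first establish the strict ordering $L_1 < L_2 < \cdots < L_{2n-4} < R$, then verify bijectivity of $\mathcal{S}$ by showing the images of the rectangular pieces of $\Omega$ tile $\Omega$ correctly.

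For the ordering, I use that $N_1\cdot y = -1/(y+1-\tau)$ has positive derivative, hence is monotone increasing on each of $(-\infty, \tau - 1)$ and $(\tau - 1, \infty)$. The base inequalities $L_1 < L_2$ (from $\tau > 1$) and $L_2 < L_3$ (reducing to $-1 < 0$) are immediate. The key step is to show $L_{2n-4} = \tau - 1$: since $AB = -C$ gives $M_1 = A^{-1}C = -B$ in $\text{SL}_2$, and $N_1$ is conjugate to $M_1$, the element $N_1$ has order $n$ in $\text{PSL}_2(\mathbb R)$. Direct computation gives $N_1(\tau-1) = \infty$, $N_1(\infty) = 0$, $N_1(0) = 1/(\tau-1) = L_2$, so $N_1^3(\tau-1) = L_2$; hence $L_{2n-4} = N_1^{n-3}L_2 = N_1^{-3}L_2 = \tau - 1$. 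All $L_i$ with $i \le 2n-4$ then lie in $(0, \tau-1]$, where $N_1$ is monotone increasing, and the recursion $L_{m+2} = N_1 L_m$ completes the inductive chain. Finally, $L_{2n-4} = \tau - 1 < \tau = R$ is trivial.

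For bijectivity, label the $2n-3$ sub-intervals of $\mathbb I$ in left-to-right order as $I_1, \ldots, I_{2n-3}$, so that $\Delta_1$ contains $I_1, \ldots, I_{2n-5}$ (with heights $L_1, \ldots, L_{2n-5}$), $\Delta_2$ contains $I_{2n-4}$ (height $L_{2n-4}$) together with a left portion of $I_{2n-3}$ (height $R$), and each $\Delta_k$ with $k \ge 3$ lies wholly inside $I_{2n-3}$. On $\Delta_1$, the shift $M_1$ sends $I_j \mapsto I_{j+2}$ for $j \le 2n-6$ and sends $I_{2n-5} \mapsto I_{2n-3}$ (using $M_1\cdot \phi_{2n-4} = 0$); combined with $N_1\cdot 0 = L_2$ and the recursion, the $\Delta_1$-contribution is a horizontal strip $I_j \times [L_2, L_j]$ for $3 \le j \le 2n-4$ and $I_{2n-3} \times [L_2, N_1 L_{2n-5}]$. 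For $k \ge 2$, $M_k$ is a homeomorphism $\Delta_k \to \mathbb I$, while $N_k\cdot 0 = 1/(k\tau - 1)$ and $N_k\cdot \tau = 1/((k-1)\tau - 1)$; consequently the $\Delta_k$-images for $k \ge 3$ tile $\mathbb I \times [0, 1/(2\tau - 1)]$, the $(\Delta_2 \cap I_{2n-3})$-image fills $(I_2 \cup \cdots \cup I_{2n-3}) \times [1/(2\tau - 1), L_2]$, and the $I_{2n-4}$-image is $I_1 \times [1/(2\tau - 1), N_2 L_{2n-4}]$.

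Assembling these contributions, the preimages tile the full rectangle above each $I_j$ in $\Omega$ exactly, provided two pivotal identities hold: (a) $N_1 L_{2n-5} = \tau$, needed for the slab over $I_{2n-3}$ to reach $R$; and (b) $N_2 L_{2n-4} = L_1$, needed for the slab over $I_1$ to reach $L_1$. These two identities constitute the main obstacle, but both reduce to the order-$n$ orbit structure of $N_1$. For (a), $N_1\tau = -1$ and $N_1(-1) = 1/\tau$ give $N_1^2\tau = 1/\tau$, so $\tau = N_1^{-2}(1/\tau) = N_1^{n-2}(1/\tau) = N_1 L_{2n-5}$. For (b), using $L_{2n-4} = \tau - 1$ established above, $N_2(\tau-1) = -1/(\tau - 1 + 1 - 2\tau) = 1/\tau = L_1$. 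Injectivity of $\mathcal{S}$ within each $\Delta_k$-piece is automatic, so the tiling argument yields the desired bijection up to $\mu$-measure zero.
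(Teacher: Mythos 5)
Your proof is correct and, at the structural level, follows the same route as the paper's: cut $\mathbb I$ into the $2n-3$ subintervals determined by the $g$-orbit of $-\tau$, track the image of each rectangular piece of $\Omega$, and reduce the tiling to the two pivotal identities $N_1\cdot L_{2n-5}=\tau$ and $N_2\cdot L_{2n-4}=L_1$. Where you genuinely differ is in how those identities are verified. The paper delegates them to Lemmas~\ref{l:evenProdNice} and~\ref{l:oddProdNice}, splitting into the cases $n$ even and $n$ odd and computing with the trigonometric closed form \eqref{e:powerB} for $B^j$, and extracts $N_2\cdot L_{2n-4}=L_1$ from the fact that $W$ fixes $-\tau$. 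You instead note that $AB=-C$ gives $M_1=A^{-1}C=-B$, so $N_1$ has projective order $n$, and then read both identities off short orbit computations: $N_1^2\cdot\tau=1/\tau$ gives $N_1^{n-2}\cdot(1/\tau)=\tau$, and $N_1^3\cdot(\tau-1)=L_2$ gives the explicit value $L_{2n-4}=\tau-1$, whence $N_2\cdot L_{2n-4}=N_2\cdot(\tau-1)=1/\tau=L_1$. This is shorter, uniform in $n$, and produces the value $L_{2n-4}=\tau-1$, which the paper never states. Note also that your image strips correctly have bottom edge $N_1\cdot 0=1/(\tau-1)=L_2$; the paper's proof writes $[L_1,L_{2i+3}]$, which must be a slip, since with bottom $L_1<L_2$ those strips would overlap the image of the piece fibering over $\Delta_2$ and injectivity would fail.

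One step needs more justification: the assertion that all $L_i$ with $i\le 2n-4$ lie in $(0,\tau-1]$, which is what lets you apply the monotonicity of $N_1$ on a single branch throughout the inductive chain $L_m<L_{m+1}\Rightarrow L_{m+2}<L_{m+3}$. This does not follow from $L_{2n-4}=\tau-1$ alone, because $N_1$ maps $[0,\tau-1)$ onto $[1/(\tau-1),\infty)$ rather than into $[0,\tau-1)$, so a priori some intermediate $L_m$ could jump past the pole at $\tau-1$. The gap is fillable: since $N_1$ is elliptic of order $n$ it preserves the cyclic order of $\mathbb R\cup\{\infty\}$, and the two orbits $0\mapsto L_2\mapsto\cdots\mapsto L_{2n-4}\mapsto\infty\mapsto 0$ and $-1\mapsto L_1\mapsto L_3\mapsto\cdots\mapsto L_{2n-5}\mapsto\tau\mapsto-1$ therefore traverse the circle monotonically, which forces $0<L_2<\cdots<L_{2n-4}=\tau-1$ and $-1<L_1<L_3<\cdots<L_{2n-5}<\tau$, and comparing the two orbits (the image of the triple $(\tau-1,\,L_{2n-5},\,\tau)$ under $N_1$) pins $L_{2n-5}$ below $\tau-1$ as well. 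Alternatively one can use the ordering $\phi_0<\phi_{n-1}<\phi_1<\cdots<\phi_{2n-4}<\phi_{n-2}$ established just before the proposition together with the fact that the corner points lie on $y=-1/x$. To be fair, the paper's own one-line treatment of the ordering (``$N_1$ and $N_2$ are increasing, and $1/\tau<1/(\tau-1)$'') omits both the second base case $L_2<L_3$ and this branch issue, so your write-up is no less complete than the original on this point.
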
 
\begin{proof}   Direct calculation shows that $N_k \cdot \tau = \dfrac{1}{(k-1) \tau - 1}$, and $N_k \cdot 0 = \dfrac{1}{k \tau - 1}\,$.  Therefore $R= \tau$ implies that,  for $k \ge 2$, $\mathcal S$ maps  $\Delta_k\times [0,\tau]$   to $\mathbb I \times \bigg[\dfrac{1}{k \tau - 1},  \dfrac{1}{(k-1) \tau - 1}\bigg]$ and  of course this lies   directly above the $\mathcal S$-image of $\Delta_{k+1}\times [0,\tau]$.     

We have that $ [\phi_i, \phi_{n-1 +i}) \subset \Delta_1$  for $i = 0, \dots,  n-4$, and claim that $\mathcal S$ sends 
$[\phi_i, \phi_{n-1 +i}) \times [0 ,L_{2 i + 1}]$ to   $[\phi_{i+1}, \phi_{n+i}) \times [L_1,L_{2 i + 3}]$, for $i<n-4$; and that the image of $[\phi_{n-3},  1/(1-\tau)\,) \times [0,L_{2 n-5}]$ is $[\phi_{n-2},0) \times [L_1,R]$.  By  definition of $N_1$, all of this is clear, up to checking that $N_{1}^{n-2}\cdot (1/\tau) = \tau$.  This latter fact follows (depending upon parity of $n$) from Lemmas ~\ref{l:evenProdNice}  and ~\ref{l:oddProdNice} below.  

Similarly,  one finds that $[\phi_{n-2+j}, \phi_{j}) \times [0,L_{2 i}]$ is sent by $\mathcal S$ to $[\phi_{n-1+j}, \phi_{j+1}) \times [L_1,L_{2 (i+1)}]$ until $[\phi_{2n-4},\phi_{n-2} ) \times [0,L_{2 n - 4}]$ is mapped to 
$[\phi_0, \phi_{n-1})\times [\dfrac{1}{2 \tau - 1}, N_2 \cdot L_{2 n - 4}]$.    But,  $N_2 \cdot L_{2 n - 4} = N_2\cdot \,N_1^{n-3}N_2 N_1^{n-2} \cdot L_1  = L_1$ follows from our hypotheses and the fact that $W$ fixes $-\tau$.   We have thus shown that up to measure zero,  $\mathcal S$ is surjective;  injectivity is easily verified. 

 Each of $N_1\cdot y$ and $N_2\cdot y$ define increasing functions of $y$.   Thus, since certainly  $1/\tau < 1/(\tau -1)\,$, the heights are strictly increasing.
 \end{proof}

\begin{figure}
\scalebox{1.1}{
{\includegraphics{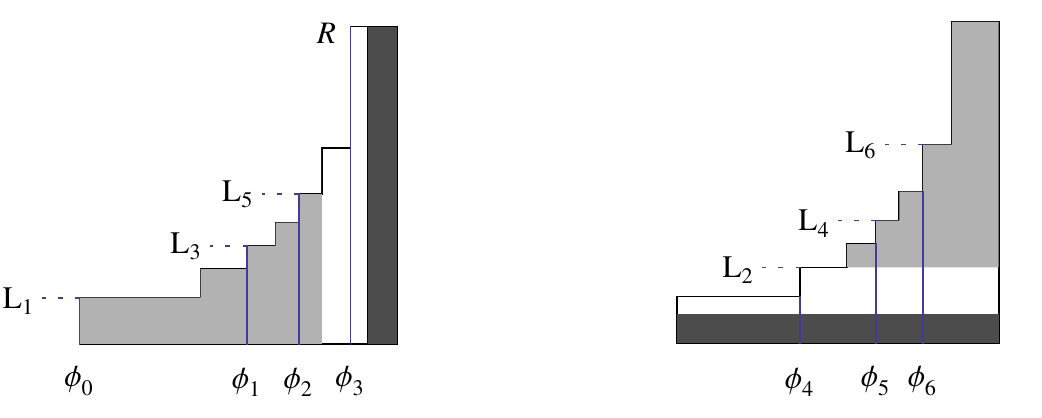}}}
\caption{Natural extension for $n=5$.     Light grey:   region initially 
fibering over  $\Delta_1$ and then its image;   dark grey: region fibering above $\Delta_k$ with $k>2$ and then its image.}
\label{natExtFig}
\end{figure}
 
\begin{Lem}\label{l:infMeasure}    The region $\Omega$ is of infinite measure.  
\end{Lem}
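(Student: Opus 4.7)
The plan is to isolate the leftmost column of $\Omega$, namely $[\phi_0, \phi_{n-1}) \times [0, L_1]$, and show that $\mu$ is already infinite on this piece alone. By the interval definition $\mathbb I = [-\tau, 0)$ we have $\phi_0 = -\tau$, and by Proposition \ref{p:bijective} we have $L_1 = 1/\tau$. The density $(1+xy)^{-2}$ blows up along the hyperbola $xy = -1$, which passes precisely through the upper-left corner $(-\tau, 1/\tau)$ of this column since $(-\tau)(1/\tau) = -1$. That is, $\Omega$ has a corner lying exactly on the singular locus of $d\mu$.

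To turn this observation into a quantitative divergence, I would perform the $y$-integration first. Using the antiderivative $\int (1+xy)^{-2}\,dy = -\dfrac{1}{x(1+xy)}$, the inner integral
\[
\int_0^{1/\tau} \frac{dy}{(1+xy)^2}
\]
simplifies, after combining the two resulting fractions, to $\dfrac{1}{\tau + x}$ for $x \in (-\tau, 0)$. The outer integral $\int_{-\tau}^{\phi_{n-1}}\! dx/(\tau + x)$ then diverges logarithmically at the left endpoint $x = -\tau$. Since the density is nonnegative and $\Omega$ contains this leftmost column, monotonicity yields $\mu(\Omega) = \infty$.

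The main ``obstacle'' is really nothing more than verifying that the corner of the leftmost rectangle coincides with the zero set of $1+xy$; once the values $\phi_0 = -\tau$ and $L_1 = 1/\tau$ are in hand, the remaining divergence is a one-variable calculation. Conceptually, this divergence is what forces the authors to induce on a proper subinterval in the next step: the interval map $g$ has a parabolic fixed point at $-\tau$ (indeed $W \cdot(-\tau)=-\tau$ with $W$ the unipotent element of \eqref{e:Wdefd}), and the singularity of $(1+xy)^{-2}$ at the associated corner of $\Omega$ is precisely the planar manifestation of this parabolic indifferent behavior.
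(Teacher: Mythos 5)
Your proof is correct and rests on the same key observation as the paper's: the corner $(\phi_0, L_1) = (-\tau, 1/\tau)$ of $\Omega$ lies on the singular curve $xy = -1$ of the density $(1+xy)^{-2}$, which the paper states and then declares the infinitude of the measure ``clear.'' Your explicit Fubini computation $\int_0^{1/\tau}(1+xy)^{-2}\,dy = (\tau+x)^{-1}$ followed by the logarithmic divergence at $x=-\tau$ simply fills in the detail the paper omits, so this is the same argument carried out more completely.
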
 
\begin{proof}    Since $\mathcal S$ preserves $\mu$,  we see that  the $\mathcal S$-orbit of $(\phi_0, L_1) = (-\tau, 1/\tau)$ lies on the curve $y=-1/x$.  But,  $d \mu = dx dy/(1+ x y)^2$ so that $\Omega$ is clearly of infinite $\mu$-measure. 
\end{proof} 

\bigskip 

F. ~Schweiger's ~\cite{Schw}  formalized a proof in \cite{NIT} so as to obtain conditions  that imply that a planar system is a natural extension of an associated interval map.   In particular,  see Theorem~21.2.1 of ~\cite{Schw},  one mainly needs to verify that there is an appropriate algorithm  on the second coordinates.   It is easily verified that in our setting, the map is given by $y \mapsto N_k \cdot y$ is provides that algorithm.    (For details pertaining to  an application of Schweiger's formalism in a situation that is less straightforward than ours, see Section ~4.3 of \cite{KNS}.)

\begin{Prop}\label{p:natExt}   With $\Omega$ as in the previous lemma,  
let $\nu$ be the measure on $\mathbb I$ obtained by integration along the fibers.   
Then $(\mathcal S, \Omega,  \mu,\mathscr B')$ is the natural extension 
of $(g, \mathbb I,  \nu,\mathscr B)$,  where $\mathscr B', \mathscr B$ denote the 
respective $\sigma$-algebra of Borel sets.    

In particular,   the $\nu$-measure of $\mathbb I$ is infinite.
\end{Prop}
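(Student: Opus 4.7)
The plan is to invoke F.~Schweiger's natural extension criterion (Theorem~21.2.1 of \cite{Schw}), exactly as signaled in the paragraph preceding the proposition. Schweiger's theorem says, roughly, that a bijective $\mu$-preserving planar map whose first coordinate factors to $g$ and whose second coordinate evolves by a deterministic algorithm depending only on the current ``digit'' $k$ is the natural extension of $g$, provided the second-coordinate algorithm contracts backward-in-time so that $y$ can be recovered from the forward orbit of $x$.

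Concretely I would verify four items. First, $\mathcal{S}$ is bijective on $\Omega$ (up to measure zero): this is exactly Proposition~\ref{p:bijective}. Second, $\mathcal{S}$ preserves $d\mu = dx\,dy/(1+xy)^2$: this is a routine calculation, since for any $M=\begin{pmatrix} a&b\\c&d\end{pmatrix}\in\mathrm{SL}_2(\mathbb{R})$ and its dual $M^*=\begin{pmatrix} d&-c\\-b&a\end{pmatrix}$ acting on the second coordinate, one checks the identity $1+(M\cdot x)(M^*\cdot y)=(1+xy)/[(cx+d)(-by+a)]$, and the resulting Jacobian factor cancels the squared weight in $d\mu$; the matrices $N_k$ of the paper are precisely the duals of $M_k$ in this sense. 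Third, $\pi\circ\mathcal{S}=g\circ\pi$ for $\pi(x,y)=x$, which is immediate from the definitions. Fourth, and this is where the real content lies, the evolution $y\mapsto N_k\cdot y$ depends only on the digit $k=k(x)$ and not on $y$, so $y$ is determined by the past digit sequence of $x$ through a nested-interval construction in which each $N_{k_j}$ acts as a strictly contracting fractional linear map on the appropriate height interval; this is the standard ``forgetting'' mechanism underlying all continued-fraction-type natural extensions and is parallel to the verification in Section~4.3 of \cite{KNS}.

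Finally, for the infinite measure claim, by construction $\nu$ is the marginal of $\mu$ under fiber integration, so $\nu(\mathbb{I})=\mu(\Omega)$; Lemma~\ref{l:infMeasure} gives $\mu(\Omega)=\infty$, and the claim follows at once. The main obstacle in the above plan is the contraction/forgetting check in the fourth item: although intuitively transparent from the form of the $N_k$, tracking Schweiger's precise hypotheses in our piecewise setting, where the cylinders $\Delta_k$ and the distinguished $g$-orbit of $-\tau$ carve $\Omega$ into explicit pieces with geometrically meaningful heights $L_1<\cdots<L_{2n-2}<R$, is where one must be a little careful.
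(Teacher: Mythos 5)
Your proposal is correct and follows essentially the same route as the paper, which (in the paragraph preceding the proposition) simply invokes Theorem~21.2.1 of Schweiger together with the already-established bijectivity of $\mathcal S$ and notes that $y \mapsto N_k \cdot y$ supplies the required algorithm on second coordinates; your fourth item and the fiber-integration argument for infinitude of $\nu(\mathbb I)$ via Lemma~\ref{l:infMeasure} just spell out what the paper leaves implicit.
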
 

\subsection{Bijectivity of the planar map $\mathcal S$}   Besides giving the calculations that terminate the proof of the bijectivity of $\mathcal S$,    we also show that the product of the $L_j$ with $R=\tau$ equals one.   We use an induction proof, relying upon the following elementary result.

\begin{Lem}\label{l:inversesAction}  Suppose that the $2\times2$ real matrix $M$ is of determinant one and has the form 
$M = \begin{pmatrix} a& b\\-b&0\end{pmatrix}$.   Then for any real $x$ we have
\[ M\cdot x = \dfrac{1}{M^{-1} \cdot(1/x)}\,.\]
\end{Lem}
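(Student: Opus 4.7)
The statement is a purely algebraic identity, so the plan is to reduce it to a single matrix identity and verify it.

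First, I would observe that the map $x\mapsto 1/x$ is itself a M\"obius action: it is induced by the involution $J=\begin{pmatrix}0&1\\ 1&0\end{pmatrix}$ (acting projectively, so $J^{2}=\mathrm{Id}$). Rewriting the right-hand side of the claimed identity gives $1/(M^{-1}\cdot(1/x))=J\,M^{-1}\,J\cdot x$. Hence the lemma is equivalent to the single projective identity
\[
M \;=\; J\,M^{-1}\,J,
\]
which in turn is equivalent to $(JM)^{2}=\pm\mathrm{Id}$.

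Next, I would carry out the small computation $JM$ from the hypothesis on $M$, namely
\[
JM \;=\; \begin{pmatrix}0&1\\ 1&0\end{pmatrix}\begin{pmatrix}a&b\\ -b&0\end{pmatrix} \;=\; \begin{pmatrix}-b&0\\ a&b\end{pmatrix},
\]
and square it. The resulting matrix is diagonal with both diagonal entries equal to $b^{2}$. The determinant hypothesis $\det M=1$ says precisely that $b^{2}=1$, so $(JM)^{2}=b^{2}\,\mathrm{Id}=\mathrm{Id}$, as required. Projecting back, this gives $M=JM^{-1}J$, which is the desired identity.

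There is no serious obstacle here; the only ``content'' is recognizing that the special zero in the lower right of $M$ is exactly what forces the matrix to be inverted by the conjugation $J(\cdot)J$ once $\det M=1$ fixes the lower-left entry to have square one. If a reader prefers a hands-on derivation, one can instead simply compute $M\cdot x=(ax+b)/(-bx)$ and $M^{-1}\cdot(1/x)=-bx/(ax+b)$ directly from $M^{-1}=\begin{pmatrix}0&-b\\ b&a\end{pmatrix}$ and observe that one is the reciprocal of the other; I would mention this as the one-line alternative.
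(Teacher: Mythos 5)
Your argument is correct: $\det M = b^{2}$, so the hypothesis forces $b^{2}=1$ and your computation $(JM)^{2}=b^{2}\,\mathrm{Id}$ (together with the one-line direct check you mention) establishes the identity. The paper's proof is simply ``this is immediately verified,'' i.e.\ the same direct computation, so your write-up matches the intended argument while adding a slightly more conceptual framing via conjugation by $J$.
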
 
\begin{proof}  This is immediately verified.
\end{proof}

We also note the following useful formula.   Using the conjugation expressing $B$ in terms of $\beta_n$ given in Subsection 3.1,  one easily verifies that for any integer $j$,  we have 
 \begin{equation}\label{e:powerB}
 B^j= \dfrac{1}{\sin(\pi/n)}  \begin{pmatrix} \sin \frac{ (j+1)\pi}{n} & \sin \frac{j \pi}{n}\\ \\- \sin \frac{j\pi}{n} & -\sin \frac{(j-1) \pi}{n} \end{pmatrix}. 
\end{equation}  

\bigskip
 
\begin{Lem}\label{l:evenProdNice} Let $n$ be even.  Then, with notation as above, $\phi_{n/2 -1} = -1$.   Furthermore, $N_{1}^{n-2}\cdot (1/\tau) = \tau$.  
\end{Lem}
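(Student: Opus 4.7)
The plan has two independent computations; both are quite direct once one makes one useful algebraic observation.

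First, for the claim $\phi_{n/2-1} = -1$, the key reduction is noticing that the relation $AB = -C$ gives $A^{-1}C = -B$ in $\mathrm{SL}_2$, hence $A^{-1}C = B$ in $\mathrm{PSL}_2$. Since the interval map acts projectively and the preceding discussion established that $\phi_0,\dots,\phi_{n-3}$ all lie in $\Delta_1$ (so that $g = A^{-1}C$ on each), we get $\phi_j = B^j\cdot(-\tau)$ for $0\le j \le n-3$. For even $n$, $j=n/2-1$ is an integer in this range, so plugging this into the explicit formula \eqref{e:powerB} for $B^j$ yields
\[
\phi_{n/2-1} \;=\; \frac{-\tau\sin(\pi/2) + \sin\!\bigl(\tfrac{(n/2-1)\pi}{n}\bigr)}{\tau\sin\!\bigl(\tfrac{(n/2-1)\pi}{n}\bigr) - \sin\!\bigl(\tfrac{(n/2-2)\pi}{n}\bigr)}
\;=\; \frac{-\tau + \cos(\pi/n)}{\tau\cos(\pi/n) - \cos(2\pi/n)},
\]
using $\sin(\pi/2-k\pi/n)=\cos(k\pi/n)$. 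Setting this equal to $-1$ and clearing denominators reduces the required identity to $2\cos^2(\pi/n) = 1 + \cos(2\pi/n)$, which is just the double-angle formula. So the first assertion follows.

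For the second claim $N_1^{n-2}\cdot(1/\tau) = \tau$, I would first observe that $N_1 = \bigl(\begin{smallmatrix} 0 & -1\\ 1 & 1-\tau\end{smallmatrix}\bigr)$ has characteristic polynomial $\lambda^2 - (1-\tau)\lambda + 1 = \lambda^2 + 2\cos(\pi/n)\lambda + 1$, so its eigenvalues are $-e^{\pm i\pi/n}$. Therefore $N_1$ is elliptic of projective order $n$, so in $\mathrm{PSL}_2$ one has $N_1^{n-2} = N_1^{-2}$. Now one simply computes the two applications: $N_1^{-1}\cdot(1/\tau) = 2\cos(\pi/n) - \tau = -1$ (using $\tau - 1 = 2\cos(\pi/n)$), and then $N_1^{-1}\cdot(-1) = 1 + 2\cos(\pi/n) = \tau$. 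Stringing these together gives $N_1^{n-2}\cdot(1/\tau) = N_1^{-2}\cdot(1/\tau) = \tau$.

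The only potential subtlety is that the order-$n$ statement for $N_1$ should really be justified; but the computation of eigenvalues is entirely elementary, and in any case one could alternatively verify $N_1^n = \pm I$ directly from the fact that $N_1$ is conjugate (via $\bigl(\begin{smallmatrix}0 & -1\\ 1 & 0\end{smallmatrix}\bigr)$) to $M_1 = A^{-1}C = -B^{-1}$ in $\mathrm{SL}_2$, which together with \eqref{e:powerB} at $j=n$ shows $B^n = \mathrm{Id}$ projectively. Neither half of the proof should present a serious obstacle; this is essentially a trigonometric bookkeeping exercise, with the main ingredients being the identification $A^{-1}C \equiv B$ in $\mathrm{PSL}_2$ and the computation of the projective order of $N_1$.
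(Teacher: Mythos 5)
Your proof is correct and follows essentially the same route as the paper's: a direct computation with the explicit formula \eqref{e:powerB} for the first claim, and use of the projective order $n$ (of $N_1$ directly, rather than of $B$ as the paper does) to reduce the second claim to a two-step calculation. One immaterial slip: in your closing paragraph $A^{-1}C$ equals $-B$, not $-B^{-1}$ (as you correctly state earlier); since $B$ and $B^{-1}$ have the same projective order, this changes nothing.
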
 

\begin{proof} First note that by the calculation of the orbit of $\phi_0$ above,  $\phi_{n/2 -1}  = (A^{-1}C)^{n/2-1} \cdot (-\tau)$. Since $B=A^{-1}C$, it suffices to show that $B^{n/2 - 1} \cdot (-\tau)=-1$, or that 
$B^{n/2}\cdot (-\tau)=B \cdot (-1)$. Direct calculation shows that $B\cdot (-1)= -\tau+ 2$. From  \eqref{e:powerB} 
\[ B^{n/2} = \begin{pmatrix} -\cot\pi/n & -\csc\pi/n \\
                                                                           \csc\pi/n & \cot\pi/n \end{pmatrix}\,, \]  
and direct calculation shows that $B^{n/2}\cdot (-\tau)= -\tau+2 = B\cdot (-1)$.   Finally,  one easily verifies that $B^{n/2}\cdot (2-\tau) = -1/\tau\,$, and $N_{1}^{n-2}\cdot (1/\tau) = \tau$ follows.

\end{proof}

\begin{Cor}\label{c:evenRelation}  Let $n$ be even.  Then for $0 \le j \le n/2 -1$, we have 
\[   \phi_{n/2 -1-j} \; \phi_{n/2 -1+j}  =  1\,.\]
Furthermore,   the product $R \cdot \prod_{j=1}^{2n-4}\, L_j = 1$.
\end{Cor}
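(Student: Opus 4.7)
The plan is to exploit that $B=\begin{pmatrix}\lambda & 1\\-1 & 0\end{pmatrix}$ fits the form of Lemma~\ref{l:inversesAction} with $b=1$; combined with $B^n=\mathrm{Id}$ projectively, this yields the reciprocal identity $B^{m}\cdot x = 1/\bigl(B^{-m}\cdot(1/x)\bigr)$ for every integer $m$, which I use to generate an involutive symmetry across the $g$-orbit of $-\tau$.

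For the first assertion, I note $\phi_k = B^k\cdot(-\tau)$ for $0\le k\le n-2$, since $g$ acts as $B$ on $\Delta_1$ and $\phi_0,\ldots,\phi_{n-3}\in\Delta_1$. A quick computation (using $\lambda-\tau=-1$) gives $B^{-1}\cdot(-\tau)=1$ and $B^{-1}\cdot 1=-1/\tau$, so $\phi_{n-2}=B^{-2}\cdot(-\tau)=-1/\tau$, recovering the content of $N_1^{n-2}(1/\tau)=\tau$ in Lemma~\ref{l:evenProdNice}. The reciprocal identity then reads
\[ \frac{1}{\phi_k} \;=\; B^{-k}\cdot\Bigl(\frac{-1}{\tau}\Bigr) \;=\; B^{-k}B^{n-2}\cdot(-\tau) \;=\; \phi_{n-2-k}, \]
i.e.\ $\phi_k\phi_{n-2-k}=1$ for $0\le k\le n-2$; setting $k=n/2-1-j$ recovers the stated symmetry about $\phi_{n/2-1}=-1$ (from Lemma~\ref{l:evenProdNice}).

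For the product identity, I use the observation from the proof of Lemma~\ref{l:infMeasure} that the $\mathcal S$-orbit of $(\phi_0,L_1)=(-\tau,1/\tau)$ lies on $y=-1/x$. Reading heights from upper-left corners yields $L_{2i+1}=-1/\phi_i$ for $0\le i\le n-3$, $R=-1/\phi_{n-2}$, and $L_{2j}=-1/\phi_{n-2+j}$ for $1\le j\le n-2$; the $2n-3$ heights together give
\[ R\cdot\prod_{j=1}^{2n-4}L_j \;=\; \frac{(-1)^{2n-3}}{\prod_{k=0}^{2n-4}\phi_k} \;=\; \frac{-1}{\prod_{k=0}^{2n-4}\phi_k}, \]
so the identity reduces to $\prod_{k=0}^{2n-4}\phi_k=-1$. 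Splitting at $k=n-2$, the first-half product collapses under the Part~1 pairing to the sole unpaired term $\phi_{n/2-1}=-1$.

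The main obstacle is the second-half product $\prod_{k=n-1}^{2n-4}\phi_k$, which the Part~1 symmetry does not reach. I would mirror the first-arc argument: $\phi_{n-1+j}=B^j\cdot(-\lambda)$ for $0\le j\le n-3$ (same cylinder-containment justification, since $\phi_{n-1},\ldots,\phi_{2n-5}\in\Delta_1$), with endpoints $\phi_{n-1}=-\lambda$ and $\phi_{2n-4}=-1/\lambda$. The auxiliary identity $B^3\cdot(-1/\lambda)=-\lambda$ is immediate from the three-step orbit $-1/\lambda\mapsto 0\mapsto\infty\mapsto-\lambda$; via the reciprocal formula and $B^n=\mathrm{Id}$, it delivers $\phi_{n-1+j}\phi_{2n-4-j}=1$. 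Because $n$ is even, the $n-2$ terms in this second product pair off with no middle index, giving $\prod_{k=n-1}^{2n-4}\phi_k=1$. Combining, $\prod_{k=0}^{2n-4}\phi_k=-1$ and hence the required $R\cdot\prod L_j=1$.
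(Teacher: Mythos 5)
Your proof is correct and follows essentially the same route as the paper's: both rest on Lemma~\ref{l:inversesAction} to produce the reciprocal pairings among the points of the $g$-orbit of $-\tau$ (the paper phrases this as an induction outward from $\phi_{n/2-1}^2=1$, threading through the single $M_2$-step, whereas you iterate the lemma into the identity $B^m\cdot x = 1/(B^{-m}\cdot(1/x))$ and anchor each $M_1$-arc at its explicitly computed endpoints), and both deduce the product identity from the corner points lying on the curve $y=-1/x$. In each version the only unpaired factor is $\phi_{n/2-1}=-1$, so the arguments agree in substance.
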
 
\begin{proof}     The displayed equation obviously holds when $j=0$.   We use induction,  
repeatedly applying Lemma~\ref{l:inversesAction} with $M = M_1 = A^{-1}C$ for  $1 \le j \le n/2 -1$.    Thereafter we apply this lemma with $M = M_2 = A^{-2}C$, and then again with a series of $M = A^{-1}C$.    

Now,  we have that the various heights $R, L_j$ are $y$-coordinates of ``corner'' points whose corresponding $x$-coordinates are in the orbit of $\phi_0$.   Since these corner points lie on the curve $y = 1/x$, the result follows.   
\end{proof} 

\begin{Lem}\label{l:oddProdNice}
Let $n=2m+3$. Then $ (A^{-1}C)^mA^{-2}C(A^{-1}C)^{n-2}(-\tau)=-1 $ and so $\phi_{3m+2}=-1$. 
Furthermore, $N_{1}^{n-2}\cdot (1/\tau) = \tau$.
\end{Lem}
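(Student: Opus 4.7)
The plan is to follow the template of Lemma~\ref{l:evenProdNice}: exploit that the $g$-orbit of $-\tau$ begins with $n-2$ applications of $B = A^{-1}C$, then one application of $A^{-2}C$, then further applications of $B$. Since $3m+2 = (n-1) + m$ when $n = 2m+3$, we obtain
\[ \phi_{3m+2} = B^m\, A^{-2}C\, B^{n-2} \cdot (-\tau), \]
which is exactly the expression in the statement. As $B$ has projective order $n$, we have $B^{n-2} = B^{-2}$. Two applications of $B^{-1}$ (using $2\cos(\pi/n) = \tau - 1$) yield $B^{n-2}\cdot(-\tau) = -1/\tau$, after which direct matrix computation gives $A^{-2}C \cdot (-1/\tau) = 1 - \tau$. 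It remains only to check that $B^m \cdot (1 - \tau) = -1$.

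For this, write $s_j = \sin(j\pi/n)$ and apply the closed form \eqref{e:powerB} to get
\[ B^m \cdot (1-\tau) = \frac{s_{m+1}(1-\tau) + s_m}{-s_m(1-\tau) - s_{m-1}}. \]
The product-to-sum identity $2\cos(\pi/n)\sin(j\pi/n) = \sin((j+1)\pi/n) + \sin((j-1)\pi/n)$ translates to $(\tau-1)s_j = s_{j+1} + s_{j-1}$, so the numerator telescopes to $s_m - (s_{m+2} + s_m) = -s_{m+2}$ and the denominator to $(s_{m+1} + s_{m-1}) - s_{m-1} = s_{m+1}$. Hence $B^m \cdot (1-\tau) = -s_{m+2}/s_{m+1}$, and the relation $n = 2m+3$ forces $(m+2)\pi/n = \pi - (m+1)\pi/n$, so that $s_{m+2} = s_{m+1}$ and the ratio equals $-1$, as required.

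For the second assertion, the defining relation $N_k \cdot y = -1/(M_k \cdot (-1/y))$ exhibits $N_1$ as the conjugate of $B = M_1$ by the involution $y \mapsto -1/y$; iterating gives $N_1^{n-2}(y) = -1/\bigl(B^{n-2} \cdot (-1/y)\bigr)$. Applying this with $y = 1/\tau$ and the value $B^{n-2}(-\tau) = -1/\tau$ computed above immediately produces $\tau$. The only real obstacle in the overall argument is the trigonometric bookkeeping, but everything telescopes cleanly once the three-term recurrence $(\tau - 1)s_j = s_{j+1} + s_{j-1}$ is in hand; the final ``miraculous'' cancellation $s_{m+2} = s_{m+1}$ is precisely the analogue of the even-case identity $B^{n/2}(-\tau) = -\tau + 2$ used in Lemma~\ref{l:evenProdNice}.
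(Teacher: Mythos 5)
Your proposal is correct and follows essentially the same route as the paper's proof: reduce $B^{n-2}$ to $B^{-2}$, compute $B^{-2}\cdot(-\tau)=-1/\tau$ and $A^{-2}C\cdot(-1/\tau)=1-\tau$, then verify $B^m\cdot(1-\tau)=-1$ from the closed form \eqref{e:powerB}, with the identity $N_1^{n-2}\cdot(1/\tau)=\tau$ obtained from the same value of $B^{n-2}\cdot(-\tau)$ via the conjugation defining $N_1$. Your explicit telescoping via $(\tau-1)s_j=s_{j+1}+s_{j-1}$ simply spells out the ``elementary trigonometric manipulations'' the paper leaves implicit.
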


\begin{proof}  We have that $B^{n-2}=B^{-2}$, but 
  \[B^{-2}= \begin{pmatrix} -1 & 1-\tau \\ \tau-1 & (1-\tau)^2-1 \end{pmatrix} \,,\] 
giving $B^{n-2} \cdot (-\tau)=B^{-2} \cdot (-\tau) = -1/\tau \,$.    From this,  $N_{1}^{n-2}\cdot (1/\tau) = \tau$ follows.  

Now  
\[ A^{-2}C= \begin{pmatrix} 1-2\tau & -1 \\  1 & 0 \end{pmatrix} \] 
and so $A^{-2}C \cdot (-1/\tau) =  1-\tau\, $.

 It remains to show that $B^m \cdot (1-\tau)=-1$.    By \eqref{e:powerB}, 
 
 \[ B^m \cdot (-2\cos(\pi/n)) = \frac{-2\cos\frac{\pi}{n} \, \sin\frac{(m+1)\pi}{n} +\sin\frac{m\pi}{n}}{2\cos\frac{\pi}{n}  \sin\frac{m\pi}{n} - \sin\frac{(m-1)\pi}{n} }\,. \]
Elementary trigonometric manipulations show that  the numerator here equals  $(-1)\sin \frac{(m+2)\pi}{n}$.   Similarly,  the denominator has value $\sin\frac{(m+1)\pi}{n}$.    Since $m=(n-3)/2$,  we are evaluating the quotient of $-\sin\frac{(n+1)\pi}{2n}$ by $\sin\frac{(n-1)\pi}{2n}$. We thus find   
 $B^m\cdot (1-\tau)=-1$.   The rest follows directly.
\end{proof} 

\bigskip 

Applying Lemma ~\ref{l:inversesAction} as in the proof of Lemma ~\ref{c:evenRelation} now allows one to show the following.

\begin{Cor} Let $n = 2 m + 3$ be odd.  Then  for $0 \le j \le m$, one has both 

\[  \phi_{3m+2 -j} \; \phi_{3m+2 +j}   =  1\;\;\; \mbox{and }\;\;\;   \phi_{j} \; \phi_{n-2 +j}   =1\; .
\]

\medskip
Furthermore,   the product $R \cdot \prod_{j=1}^{2n-4}\, L_j = 1$.
\end{Cor}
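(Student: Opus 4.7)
The plan parallels the proof of Corollary \ref{c:evenRelation}: both pairing identities are set up by induction on $j$, with Lemma \ref{l:inversesAction} furnishing the inductive step; the product identity then follows because the two identities, together with the self-paired value $\phi_{3m+2}=-1$, exhaust the $2n-3$ orbit positions.

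For the first identity, the base case $j=0$ reads $\phi_{3m+2}^2 = 1$ and is immediate from Lemma \ref{l:oddProdNice}. To pass from $j$ to $j+1$, I would apply Lemma \ref{l:inversesAction} with $M$ equal to the transition matrix at position $3m+1-j$, which agrees with the matrix at position $3m+2+j$ thanks to the $\Delta_1/\Delta_2$ pattern of the orbit being symmetric across the pair $\{3m+1-j,\,3m+2+j\}$. For $0 \le j < m$ this $M$ is $M_1 = A^{-1}C$; at $j = m$, both endpoints meet the $\Delta_2$ positions $\phi_{2m+1}$ and $\phi_{4m+2}$, so $M = M_2 = A^{-2}C$. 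Both matrices have the shape $\begin{pmatrix} a & b \\ -b & 0 \end{pmatrix}$ required by Lemma \ref{l:inversesAction}, so the induction proceeds uniformly.

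The second identity (which, in the form needed to cover the remaining orbit indices, reads $\phi_j \cdot \phi_{n-2-j} = 1$) is proved analogously. Its base case $j = 0$ is $\phi_0 \cdot \phi_{n-2} = (-\tau)(-1/\tau) = 1$, using $\phi_{n-2} = B^{-2} \cdot (-\tau) = -1/\tau$ from Lemma \ref{l:oddProdNice}. Throughout $0 \le j \le m-1$ both positions $j$ and $n-3-j$ lie in $\Delta_1$, so a single application of Lemma \ref{l:inversesAction} with $M = A^{-1}C$ at each step advances the induction.

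For the product identity $R \cdot \prod_{j=1}^{2n-4} L_j = 1$, I would use (as in the proof of Lemma \ref{l:infMeasure}) that every corner point lies on the curve $y = -1/x$, so each height equals $-1/\phi_i$ for its associated orbit point. The first identity accounts for $2m+1$ orbit indices ($m$ pairs of product $1$ plus the singleton $3m+2$ of value $-1$), and the second accounts for the remaining $2m+2$ indices (in $m+1$ pairs of product $1$); hence $\prod_{i=0}^{2n-4}\phi_i = -1$, and since $2n-3$ is odd,
\[ R \cdot \prod_{j=1}^{2n-4} L_j \;=\; \prod_{i=0}^{2n-4}\!\Big(-\tfrac{1}{\phi_i}\Big) \;=\; \frac{(-1)^{2n-3}}{\prod_i \phi_i} \;=\; \frac{-1}{-1} \;=\; 1. \]
The main obstacle is the careful bookkeeping at the inductive step: verifying that the same transition matrix really is in force on both sides of each pair, and that the pairs arising from the two identities together partition every orbit index $0,1,\dots,2n-4$.
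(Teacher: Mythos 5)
Your argument is correct and is essentially the paper's: the paper's own proof is a single sentence invoking Lemma~\ref{l:inversesAction} exactly as in Corollary~\ref{c:evenRelation}, and your induction (expanding the centered family about $\phi_{3m+2}=-1$, contracting the family anchored at $\phi_0\phi_{n-2}=1$, then reading off the product from the fact that every height is $-1/\phi_i$ for a corner point on $y=-1/x$) fills in precisely the details that proof omits. The substantive point is one you have already flagged: the second identity as printed, $\phi_j\,\phi_{n-2+j}=1$, is false as stated. For $n=5$ one has $\phi_1=1-\sqrt5$ and $\phi_4=-(1+\sqrt5)/2$, so $\phi_1\phi_4=2$; the correct statement is the one you prove, $\phi_j\,\phi_{n-2-j}=1$ for $0\le j\le m$, and only in that form do the two families of pairs partition the index set $\{0,\dots,2n-4\}$ (the printed version leaves $\{m+1,\dots,2m\}$ uncovered and double-counts $\{2m+2,\dots,3m+1\}$, so the product computation would not close up). One small bookkeeping correction: to reach $j=m$ in the first family you only ever need $M=M_1$, since the step from $j$ to $j+1$ uses the transitions at positions $3m+1-j$ and $3m+2+j$, which for $0\le j\le m-1$ all lie in the $\Delta_1$ blocks; the $M_2$ step you describe at $j=m$ is the one that carries the first family past the two $\Delta_2$ positions and around to $\phi_0\phi_{n-2}=1$, i.e.\ it produces the base case of the second family rather than the last pair of the first, so it is harmless but not needed for the stated range.
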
 

\bigskip

\subsection{Interval map, $f$}     The infinitude of the measure of $\mathbb I$ can be seen as caused by the  fact that the parabolic $W$, as given in Equation ~\eqref{e:Wdefd}, fixes $\phi_0 = - \tau$.    We thus ``accelerate'' the map $g(x)$,  by inducing past  $W^{-1}\cdot [-\tau,  0)$, and thereby find our interval map $f(x)$.   

\begin{Def}   Let  $\varepsilon_0  =W^{-1}\cdot 0$, that is $\varepsilon_0 = \dfrac{- \tau^3}{1 + \tau^2}\,$.  
\end{Def} 

\begin{Lem} For $x \in (-\tau, \varepsilon_0)$,  let 
$j(x)$ be  minimal such that  $g^{j(x)}(x)> \varepsilon_0$.  Then 
 \[ j(x) = -1 + \bigg\lceil  \dfrac{-1}{\tau^2} + \dfrac{1}{\tau(\tau+x)}\,\bigg\rceil\,.\]
\end{Lem}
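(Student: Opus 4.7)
The plan is to exploit the parabolic nature of $W$ at $-\tau$ to linearize the iteration, reducing the claim to a single linear inequality.

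Concretely, I would introduce the Möbius coordinate $u = h(x) := -1/(\tau+x)$, which sends the parabolic fixed point $-\tau$ to $\infty$. Using the explicit form of $W$ from \eqref{e:Wdefd}, a direct matrix computation shows $hWh^{-1}(u) = u + \tau$, so $h(W^m(x)) = h(x) + m\tau$ for every integer $m$. From $\tau + \varepsilon_0 = \tau - \tau^3/(1+\tau^2) = \tau/(1+\tau^2)$, one obtains $h(\varepsilon_0) = -(1+\tau^2)/\tau$. Since $h$ is strictly increasing on $(-\tau,0)$, the condition $W^m(x) > \varepsilon_0$ is equivalent to
\[ h(x) + m\tau \;>\; -\frac{1+\tau^2}{\tau}, \qquad\text{i.e.,}\qquad m \;>\; -1 - \frac{1}{\tau^2} + \frac{1}{\tau(\tau+x)}\,.\]
For generic $x \in (-\tau, \varepsilon_0)$ the right-hand side is positive but not an integer, so the smallest positive integer $m$ satisfying the strict inequality is $\lceil -1 - 1/\tau^2 + 1/(\tau(\tau+x)) \rceil$. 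Pulling the $-1$ outside the ceiling yields exactly the formula stated in the lemma.

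The remaining task is to identify this count of $W$-iterates with the quantity $j(x)$ in the statement. This rests on the earlier orbit analysis of $\phi_0 = -\tau$: as long as successive iterates $W^i(x)$ stay in the parabolic neighborhood $[-\tau,\varepsilon_0) = W^{-1}\cdot[-\tau,0)$, the $g$-orbit of $x$ tracks the cycle $\phi_0,\ldots,\phi_{2n-4}$ faithfully, and each block of $2n-3$ applications of $g$ realizes precisely one application of $W$. The main technical obstacle is to verify this invariance of the cycle pattern through all preliminary revolutions; in the linearized $u$-coordinate this becomes transparent, because the preimage chain $W^{-k}\cdot[-\tau,0) = [-\tau,\,W^{-k}\cdot 0)$ is strictly decreasing and intersects in $\{-\tau\}$, so each $x \in (-\tau,\varepsilon_0)$ belongs to a unique shell $(W^{-(m+1)}\cdot 0,\,W^{-m}\cdot 0]$, and that $m$ coincides with the integer produced by the ceiling computation.
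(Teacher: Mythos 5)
Your core computation is correct and is exactly the paper's (two--line) proof carried out in detail: the paper conjugates $W$ by $x\mapsto x+\tau$ to obtain $\begin{pmatrix}1&0\\-\tau&1\end{pmatrix}$ and then ``solves''; your coordinate $u=-1/(\tau+x)$ is the same linearization, and the chain $W^m\cdot x>\varepsilon_0\iff h(x)+m\tau>-(1+\tau^2)/\tau\iff m>-1-\tau^{-2}+1/(\tau(\tau+x))$, together with the observation that the right--hand side is positive on $(-\tau,\varepsilon_0)$, yields the stated ceiling formula (up to the measure--zero set of shell endpoints, which you flag).

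One caveat concerns your closing paragraph. The identification you call ``the main technical obstacle'' --- that the minimal $j$ with $g^{j}(x)>\varepsilon_0$ equals the minimal $m$ with $W^{m}\cdot x>\varepsilon_0$ --- cannot be established, because it is false for $g$ as literally defined: $[-\tau,\varepsilon_0)\subset\Delta_1$, the map $M_1\cdot x$ is increasing there, and $M_1\cdot(-\tau)=\phi_1>\phi_{n-1}>\varepsilon_0$, so $g(x)>\varepsilon_0$ already for \emph{every} $x$ in the accelerated interval and the literal exit time is $1$. The intermediate $g$--iterates shadow $\phi_1,\dots,\phi_{2n-4}$, all of which lie to the right of $\varepsilon_0$, so the $g$--orbit does not remain in the parabolic neighborhood between successive applications of $W$; your shell argument counts $W$--iterates, not $g$--iterates. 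The quantity the lemma actually requires --- and the one used in the sequel, where $f=W^{j}$ on $\Delta'_{-j}=[W^{-(j+1)}\cdot 0,\,W^{-j}\cdot 0)$ --- is precisely the $W$--exit time that your computation delivers. In short, your proof establishes the formula as it is intended and used; the discrepancy lies in the lemma's phrasing (``$g^{j(x)}$'' should be read as ``$W^{j(x)}$''), and no shadowing argument is needed, or available, to reconcile the two readings.
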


\begin{proof}    Conjugating by the map $x \mapsto x+\tau$ sends $W$ to $\begin{pmatrix} 1&0\\-\tau&1\end{pmatrix}$.  From this one solves to find $j(x)$. 
\end{proof}

\begin{Def}\label{d:ourF}  Let $f: \mathbb I \to \mathbb I$ be given by 
\[ f(x) = \begin{cases}  g(x)& \mbox{if}\; x > \varepsilon_0\,;\\
\\
                                     g^{j(x)}(x)&\mbox{otherwise}.
              \end{cases}                       
\]
\end{Def} 

\begin{Lem}\label{l:epOrbit}     Let $\varepsilon_i := f^i(\varepsilon_0)$.   Then 
\[ \phi_0 < \varepsilon_0 < \phi_{n-1} \,,\]
and for $\ell =1, \dots, n-3$, one has 
\[ \phi_{\ell} < \varepsilon_{\ell} < \varepsilon_{n-2+\ell}< \phi_{n-1+\ell} \;.\]
Furthermore, $\varepsilon_{2n-4}= \dfrac{1}{1-2 \tau}$.
\end{Lem}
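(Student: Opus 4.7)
The plan is to reduce the lemma to a few elementary inequalities in $\tau$, then propagate these via monotonicity of the branch $B := A^{-1}C$ on $\Delta_1$. The conceptual simplification I would exploit is the projective identity
\[V := (A^{-2}C)(A^{-1}C)^{n-2} = A^{-1}B^{-1}\,,\]
which follows from $A^{-2}C = A^{-1}B$ together with $B^n = \mathrm{Id}$.

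First I would derive closed forms for the key points. From the orbit recipe or \eqref{e:powerB}, $\phi_{n-1} = 1-\tau$, so $\phi_n = B\cdot(1-\tau) = (1-\tau)+1/(\tau-1)$ and $\phi_1 = B\cdot(-\tau) = (1-\tau)+1/\tau$; from $\varepsilon_0 = W^{-1}\cdot 0 = -\tau^3/(1+\tau^2)$ one has $\varepsilon_1 = B\cdot\varepsilon_0 = (1-\tau)+(1+\tau^2)/\tau^3$; and the identity for $V$ yields $\varepsilon_{n-1} = V\cdot\varepsilon_0 = (1-\tau)+\tau/(\tau^2-\tau+1)$. With all five points in the normalized form $(1-\tau) + (\text{rational in }\tau)$, the required orderings reduce to the positivity of the three differences $\varepsilon_1-\phi_1 = 1/\tau^3$, $\phi_n-\varepsilon_{n-1} = 1/((\tau-1)(\tau^2-\tau+1))$, and $\varepsilon_{n-1}-\varepsilon_1 = (\tau^3-2\tau^2+\tau-1)/(\tau^3(\tau^2-\tau+1))$, together with analogous one-line computations for $\phi_0 < \varepsilon_0$ and $\varepsilon_0 < \phi_{n-1}$. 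Each is elementary provided $\tau > 2$, which holds for $n \ge 4$; the only nonobvious factor is the cubic $p(\tau) := \tau^3-2\tau^2+\tau-1$, strictly increasing on $\tau > 1$ with $p(2) = 1$.

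Next, the four-way ordering at $\ell = 1$ propagates to every $\ell \in \{1,\dots,n-3\}$ by induction: applying $B$ (monotone on $\Delta_1$) to $\phi_\ell < \varepsilon_\ell < \varepsilon_{n-2+\ell} < \phi_{n-1+\ell}$ produces the same chain at $\ell+1$, valid because all four points lie in $\Delta_1$ (the right endpoint $\phi_{2n-4} = 1/(1-\tau)$ appears only in the terminal step $\ell = n-3$ and is there strictly excluded). Along the way one notes $\phi_i > \phi_{n-1} > \varepsilon_0$ for every $i \ge 1$, so $f^i(\varepsilon_0) = g^i(\varepsilon_0)$ throughout and the $f$-orbit coincides with the $g$-orbit.

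Finally, $\varepsilon_{2n-4} = 1/(1-2\tau)$ follows from the matrix identity once the $g$-orbit of $\varepsilon_0$ is matched to the branch sequence of $\phi_0$'s orbit through step $2n-4$. The only non-$\Delta_1$ step is $i = n-1$, for which I would verify $\varepsilon_{n-2} \in \Delta_2$ separately via the closed form $\varepsilon_{n-2} = B^{-2}\cdot\varepsilon_0 = -(\tau^2-\tau+1)/(\tau(\tau^2-\tau+2))$; both boundary inequalities defining $\Delta_2$ again reduce to $p(\tau) > 0$. Granted the branch match, the cumulative matrix for $2n-4$ steps equals $(A^{-2}C)^{-1}W$, and
\[\varepsilon_{2n-4} = (A^{-2}C)^{-1}W\cdot\varepsilon_0 = (A^{-2}C)^{-1}\cdot 0 = \frac{1}{1-2\tau}\,.\]
The main obstacle of the proof is the recurring cubic positivity $p(\tau) > 0$; the technical linchpin is the identity $V = A^{-1}B^{-1}$, without which the expression for $\varepsilon_{n-1}$ would be a hopelessly tangled product of $n-1$ matrices.
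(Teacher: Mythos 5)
Your computations are all correct (I checked the closed forms and the three difference formulas, including the cubic $p(\tau)=\tau^3-2\tau^2+\tau-1$, which is indeed increasing for $\tau>1$ with $p(2)=1$), but your route is genuinely different from the paper's. The paper works \emph{backwards}: it sets $\alpha_1 = M_2^{-1}\cdot 0 = 1/(1-2\tau)$, pulls back repeatedly under $g$, and shows by interleaving the backward orbit with the known forward orbit of $\phi_0$ that it terminates at $\alpha_{2n-3}=\varepsilon_0$; the chain of inequalities and the identity $\varepsilon_{2n-4}=1/(1-2\tau)$ then fall out simultaneously. You work \emph{forwards} from $\varepsilon_0$ with explicit closed forms normalized as $(1-\tau)+(\text{rational in }\tau)$, and propagate a four-point chain by monotonicity of $B=A^{-1}C$. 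Your version makes the key positivity statements completely explicit (and the identity $V=A^{-1}B^{-1}$ is a genuine simplification); the paper's backward version has the advantage that the branch pattern of the $\varepsilon$-orbit never has to be assumed, since it is read off from the interleaving.

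That last point is where your write-up needs reordering to avoid a circularity. Your base case $\ell=1$ already invokes $\varepsilon_{n-1}=V\cdot\varepsilon_0$, and the validity of that closed form presupposes that $\varepsilon_1,\dots,\varepsilon_{n-3}\in\Delta'_1$ and $\varepsilon_{n-2}\in\Delta_2$ --- facts you only justify inside, or after, the induction that uses them. The fix is cheap with ingredients you already have: first run the induction on the three-point chain $\phi_\ell<\varepsilon_\ell<\phi_{n-1+\ell}$, whose base case $\ell=0$ is your directly verified $\phi_0<\varepsilon_0<\phi_{n-1}$ and which uses only the known locations of the $\phi_i$ plus monotonicity of $B$; this pins down $\varepsilon_\ell\in\Delta'_1$ for $\ell\le n-3$. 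Then verify $\varepsilon_{n-2}\in\Delta_2$ from the closed form; only then is $\varepsilon_{n-1}=V\cdot\varepsilon_0$ legitimate, and the four-point chain at $\ell=1$ and its propagation go through as you describe. (A minor slip: of the two boundary inequalities for $\varepsilon_{n-2}\in\Delta_2$, only the right-hand one reduces to $p(\tau)>0$; the left-hand one reduces to $\tau^2+1>0$.)
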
 
\begin{proof}   Since $L_{2} =1/(\tau-1)$, it follows that $\phi_{n-1} = 1- \tau$.   
One then trivially verifies that $-\tau < \varepsilon_0< \phi_{n-1}$ holds, since $2 < \tau = 1 + 2 \cos \pi/n< 3$. 

 The point $\alpha_1 := M_{2}^{-1}\cdot 0 = 1/(1-2 \tau)$ is in $M_1\cdot\Delta_1$, and we also have  $\phi_{2n-4} < \phi_{n-2} < \alpha_1$.   Thus writing  $\alpha_i = g^{-1}(\alpha_1)$, we find that $\phi_{2n-4-j} < \phi_{n-2-j} < \alpha_{j+1}$ for $j = 0, \dots, n-3$.  Furthermore, since $M_1 \cdot \phi_{2n-4} = 0$,  one also has that  $\alpha_{j+1} < \phi_{2n-3-j}$ for $j = 1, \dots, n-3$.   One completes this backwards orbit so as to find that $\alpha_{2n-3}$ must equal $\varepsilon_0$.   In particular,   $\varepsilon_{2n-4}=1/(1-2 \tau)$.   But also it follows that $\phi_{n-2} < \varepsilon_{n-2} < \varepsilon_{2n-4}$, from which the remaining inequalities follow.     
\end{proof} 
\bigskip 

Now, we define a new region, see Figure ~\ref{accelSystemFig}.

\begin{Def}\label{d:Gamma}   Let 
\[ 
\begin{aligned}
\Gamma &= [-\tau, \varepsilon_0) \times \big[0,  \tau/(\tau^2+1)\big] \; \cup \; [\varepsilon_0, \varepsilon_1) \times [0,  L_1] \;\; \cup \\
\\
                &\;\;\; \bigcup_{j=1}^{n-3} \bigg( [\varepsilon_{j}, \varepsilon_{n-2+j})\times \big(  [0, L_{j}]  \cup [L_{2 j}, L_{2 j+1}] \big) \cup   [\varepsilon_{n-2 + j}, \varepsilon_{j+1})\times [0, L_{2 j + 1}] \bigg)\\
                \\
                &\;\;\;\;\;\cup \;[\varepsilon_{n-2}, \frac{1}{1-2 \tau}) \times \big([0, L_{2n-5}] \cup [L_{2n-4}, \tau]\big)\; \;\;\cup \,  \big[ \frac{1}{1- 2\tau} , 0 \big) \times [0, \tau]  \,.
\end{aligned}
\]
\end{Def}

\bigskip

In order to define cylinder sets for $g(x)$ indicated by a single subscripting index,  we now define $\Delta'_k$ for $k$ non-zero integers. 

\begin{Def}\label{d:cylindersForG} 
If $k >2$,  then let  $\Delta'_k = \Delta_k$.   Further, let    $\Delta'_1 = [\varepsilon_0, 1/(1 - \tau)\,)$; and,  $\Delta'_{j} =   [ \, -\tau + \frac{\tau}{- (j -1)\tau^2 + 1},  -\tau + \frac{\tau}{- j \tau^2 + 1}\,)$ for $j \le -1$.  
\end{Def}
 
  In particular,  for $x \in \Delta'_{j}$ with $j \le -1$,  we have $g(x) = W^{-j}\cdot x$.    Accordingly,  let $M_{-j}  = W^{j}$ and $N_{-j}$ be its conjugate by $\begin{pmatrix} 0&-1\\1  &0 \end{pmatrix}$.   
 Thus, we have defined cylinder sets of index in $- \mathbb N \,\cup \, \mathbb N\,$, and correspondingly indexed maps.  
 
 For brevity,   we write that a two-dimensional map is the natural extension of an interval map to mean that a relationship holds analogously to the formal statement included in  Proposition~\ref{p:natExt}.
\begin{Prop}\label{p:domNatExt}   Let $\mathcal T: \Gamma \to \Gamma$ be given by $\mathcal T(x,y) = (M_k \cdot x, N_k \cdot y)$ whenever $x \in \Delta'_k\,$ for $k \in - \mathbb N \, \cup\, \mathbb N\,$.    Then 
\begin{itemize}
\item[(i)]\quad $\mathcal T$ is a bijection up to $\mu$-measure zero;   \item[(ii)]\quad  $\Gamma$ is a finite measure subset of $\Omega$;
\item[(iii)]\quad  the map $\mathcal T: \Gamma \to \Gamma$ gives a natural extension of $f: \mathbb I \to \mathbb I\,$.
\end{itemize}
\end{Prop}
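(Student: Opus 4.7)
The plan is to adapt the argument of Proposition \ref{p:bijective}, keeping track of the reorganization of phase space produced by inducing past the parabolic element $W$.

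For part (i), bijectivity is shown cylinder by cylinder. The positive-index cylinders $\Delta'_k$ with $k \ge 2$ coincide with the old $\Delta_k$, and the modified cylinder $\Delta'_1 = [\varepsilon_0, 1/(1-\tau))$ is the portion of the old $\Delta_1$ that lies to the right of the inducing cut. For these cylinders, the image analysis from Proposition \ref{p:bijective} applies essentially verbatim, showing that the rectangles over the right portion of $\mathbb I$ tile the stack of horizontal slabs of the form $[0, L_j]$ appearing in Definition \ref{d:Gamma}. For the negative-index cylinders $\Delta'_j$ with $j \le -1$, I would first observe that, by the explicit formulas in Definition \ref{d:cylindersForG}, they partition $[-\tau, \varepsilon_0)$: the left endpoints tend to $-\tau$ as $j \to -\infty$, and the right endpoint of $\Delta'_{-1}$ equals $-\tau + \tau/(\tau^2 +1) = \varepsilon_0$. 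On each rectangle $\Delta'_j \times [0, \tau/(\tau^2+1)]$, the map $\mathcal T$ applies a power of $W$ in the first coordinate and the parabolically conjugated $N_{-j}$ in the second. Conjugating $W$ by $x \mapsto x + \tau$ to a lower-triangular unipotent matrix, one verifies by direct calculation that these images fill precisely the additional horizontal slabs $[L_{2j}, L_{2j+1}]$ for $j = 1, \dots, n-3$ together with $[L_{2n-4}, \tau]$ from Definition \ref{d:Gamma}. Injectivity on each rectangle is automatic from the M\"obius action, and disjointness of the slabs gives overall injectivity.

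For part (ii), the key observation is that $\Gamma$ has been constructed so that its closure is bounded away from the pole locus $1 + xy = 0$ of the density. In $\Omega$, the height $L_1 = 1/\tau$ above the left endpoint $x = -\tau$ meets the singular point $(-\tau, 1/\tau)$, which is exactly the source of the infinite measure in Lemma \ref{l:infMeasure}. In $\Gamma$, by contrast, the ceiling above $[-\tau, \varepsilon_0)$ has been lowered to $\tau/(\tau^2 + 1)$, so at the corner $x = -\tau$ one has $1 + xy = 1/(\tau^2+1)$, which is bounded away from zero; on every other piece the height remains below $-1/x$. Thus the integrand $(1+xy)^{-2}$ is bounded on the compact closure of $\Gamma$, and $\mu(\Gamma) < \infty$ follows since $\Gamma$ is bounded.

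For part (iii), I invoke Schweiger's formalism (Theorem 21.2.1 of \cite{Schw}) exactly as in the discussion preceding Proposition \ref{p:natExt}. Parts (i) and (ii) supply bijectivity and finite measure; the semiconjugacy $\pi \circ \mathcal T = f \circ \pi$ (for $\pi$ the projection to the first coordinate) holds by construction; and the fiber map $y \mapsto N_k \cdot y$ provides the required dual algorithm on second coordinates. The pushforward of $\mu$ under $\pi$ is $\nu$ by definition, and $\mathcal T$ preserves $\mu$ because each $(M_k, N_k)$ is a pair of M\"obius transformations under which the density $dxdy/(1+xy)^{2}$ transforms correctly. Schweiger's criterion then identifies $(\mathcal T, \Gamma, \mu)$ as the natural extension of $(f, \mathbb I, \nu)$. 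The main obstacle will be the bookkeeping in part (i), specifically matching the $W^{|j|}$-images of the base strips $\Delta'_j \times [0, \tau/(\tau^2 +1)]$ with the correct slabs $[L_{2j}, L_{2j+1}]$; this is an induction on $|j|$ exploiting the parabolic structure of $W$, analogous in flavor to the inductive use of Lemma \ref{l:inversesAction} in the proof of Corollary \ref{c:evenRelation}.
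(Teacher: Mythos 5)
Your overall strategy coincides with the paper's: piece-by-piece image matching for (i), excision of the singular corner $(-\tau,1/\tau)$ for (ii), and compatibility of inducing with natural extensions (via Schweiger's formalism) for (iii). Parts (ii) and (iii) are sound as you outline them; for (ii) your direct corner computation ($1+xy=1/(\tau^2+1)$ at $(-\tau,\tau/(\tau^2+1))$, all other corners now at $\varepsilon$-points rather than $\phi$-points and hence strictly off the curve $y=-1/x$) is in fact a slightly more direct route than the paper's, which instead tracks the $\mathcal S$-orbit of the excluded point $(\phi_0,L_1)$; both rest on the inequality $\tau/(\tau^2+1)<1/\tau=L_1$.

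However, part (i) contains a concrete error in exactly the place you flag as the main obstacle. Since $W^{j'}$ maps $\Delta'_{-j'}$ onto all of $[\varepsilon_0,0)$, and the direct computation you propose shows that $N_{-j'}$ sends $\bigl[0,\tfrac{\tau}{\tau^2+1}\bigr]$ to $\bigl[\tfrac{j'\tau}{j'\tau^2+1},\tfrac{(j'+1)\tau}{(j'+1)\tau^2+1}\bigr]$, the images of the accelerated rectangles $\Delta'_{-j'}\times\bigl[0,\tfrac{\tau}{\tau^2+1}\bigr]$ are \emph{full-width} horizontal bands over $[\varepsilon_0,0)$ that stack up, as $j'\to\infty$, to fill precisely $[\varepsilon_0,0)\times\bigl[\tfrac{\tau}{\tau^2+1},L_1\bigr]$ --- the gap between the lowered ceiling over the accelerated interval and the height $L_1$. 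They are \emph{not} the slabs $[L_{2j},L_{2j+1}]$: those sit over the proper subintervals $[\varepsilon_j,\varepsilon_{n-2+j})$ and are filled by the $(M_1,N_1)$-images of other pieces of $\Gamma$ lying over $\Delta'_1$, i.e.\ by the staircase dynamics inherited from $\mathcal S$; likewise $[\varepsilon_{n-2},\tfrac{1}{1-2\tau})\times[L_{2n-4},\tau]$ is a \emph{domain} piece on which $\mathcal T$ agrees with $\mathcal S$ and whose image is $[\varepsilon_{n-1},0)\times[L_1,L_2]$, not an image of the accelerated cylinders. With your assignment the tiling cannot close up: the band $[\varepsilon_0,0)\times\bigl[\tfrac{\tau}{\tau^2+1},L_1\bigr]$ would be missed by the image while the high slabs would be covered twice. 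The fix is to redo the bookkeeping as above: the pieces of $\Gamma$ over $[\tfrac{1}{1-2\tau},0)$ and over $\Delta_2$ supply the image up to height $\tfrac{\tau}{\tau^2+1}$ (using $N_2\cdot L_{2n-5}=\tfrac{\tau}{\tau^2+1}$), the accelerated cylinders supply the band from $\tfrac{\tau}{\tau^2+1}$ up to $L_1$ over $[\varepsilon_0,0)$, and the pieces over $\Delta'_1$ together with the high slab of $\Delta_2$ supply everything above $L_1$.
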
 
\begin{proof}    By definition,  $\mathcal T$ agrees with $\mathcal S$ on $\big[ \frac{1}{1- 2\tau} , 0 \big) \times [0, \tau]$, giving an image of 
$[-\tau, 0) \times (0, \frac{1}{2 \tau-1}]$.        Also on  $[\varepsilon_{n-2}, \frac{1}{1-2\tau}) \times [L_{2n-4}, \tau]$,  the maps $\mathcal T$ and $\mathcal S$ agree, and give as image $[\varepsilon_{n-1}, 0) \times [L_1, L_2]$.

    From Lemma ~\ref{l:epOrbit}, one has that $\varepsilon_{2n-2} < 1/(1 - \tau)$; therefore,  $\Gamma \supset [\frac{1}{1-\tau}, \frac{1}{1-2 \tau}) \times [0, L_{2n-5}]$ a region upon which $\mathcal T$ also agrees with $\mathcal S$, and gives an image of $[-\tau, 0) \times [\frac{1}{2 \tau-1},  N_2 \cdot L_{2n-5}]$.  But, by Proposition ~\ref{p:bijective}, one has  $ N_2 \cdot L_{2n-5} =  N_2 N_{1}^{-1}\cdot \tau = \frac{\tau}{\tau^2+1}\,$.

   A calculation confirms that $N_{-j'}$  acts so as to send $\big[0,  \frac{\tau}{\tau^2+1}\big]$ to $\big[\frac{ j' \tau}{j' \tau^2  +1}, \frac{(1+j')\tau }{(1+j') \tau^2+ 1}\big]$.   By definition,   $W^{j'} \cdot \Delta'_{-j'} = [\varepsilon_0, 0)$, and thus we find that $\mathcal T$ sends $[0, \varepsilon_0) \times [0,  \frac{\tau}{\tau^2+1}] $ to $[\varepsilon_0, 0) \times  [\frac{\tau}{\tau^2+1}, L_1]$.
      
   On the remaining portion of $\Gamma$, one has agreement of $\mathcal T$ with $\mathcal S$,  and the image is easily calculated.  In particular,  one finds that $\mathcal T$ is bijective up to measure zero, and that $\Gamma$ is a closed proper subset of $\Omega$.      Since  $\frac{\tau}{\tau^2 + 1} < L_1$, we have that $(\phi_0, L_1) \notin \Gamma$.  It follows that the $\mathcal S$-orbit of this point is not in $\Gamma$, and from this one easily finds that $\Gamma$ is  bounded away from the curve $y = -1/x$.  Therefore,  $\Gamma$ is of finite $\mu$-measure.   Finally,  one checks that $\mathcal T$ is the induction to $\Gamma$ of $\mathcal S$ just as $f$ is induced from $g$,  and thus Proposition ~\ref{p:natExt}  implies that $\mathcal T$ gives the natural extension of $f$.
\end{proof}

\begin{figure}
\scalebox{1.3}{
{\includegraphics{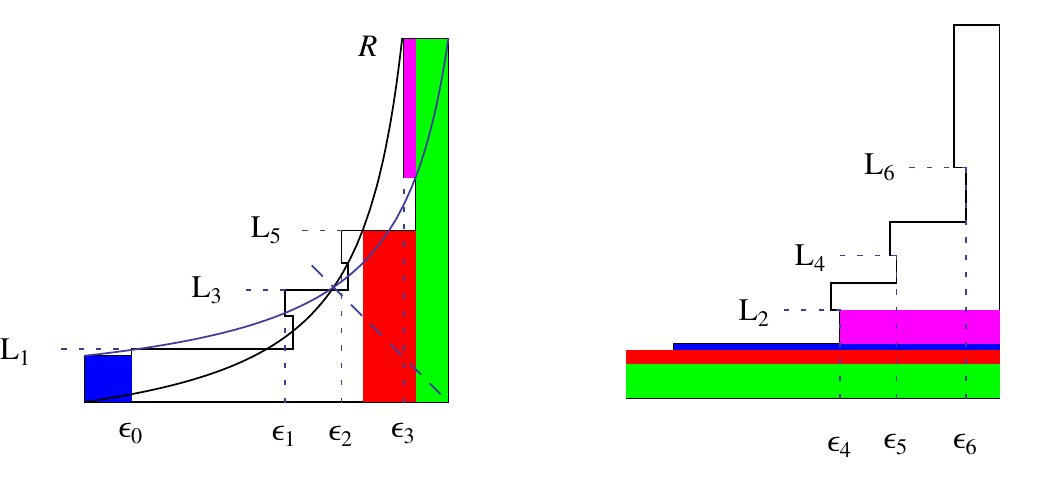}}}
\caption{Natural extension of domain $\Gamma$, for accelerated map,  $n=5$.     Blue:   region initially 
fibering over accelerated subinterval; white: over remainder of $\Delta_1$;   red: lower region fibering over $\Delta_2$; magenta: high region fibering over (part of) $\Delta_2$; green: region fibering above $\Delta_k$ with $k>2$.   And, their images.    Curves corresponding to $\theta(x,y) = \tau,  y = -x$ (dashed) and a curve related to $\theta(\, \mathcal T^{-1}(x,y)\,) = \tau\,$.}
\label{accelSystemFig}
\end{figure}

\section{Ergodicity and Diophantine properties from natural extensions} 

We use the system $(\mathcal T, \Gamma, \mu)$,   where $\mu$ denotes the probability measure on $\Gamma$ induced by $d \mu = (1+x y)^{-2} dx dy$, to find egodicity as well as to study Diophantine properties of the interval maps $f_n$.

\subsection{Convergence}   

\bigskip 
Consecutive Diophantine approximation constants have an easily computed ratio,  when $\mathcal T$ is locally of a particularly nice form.  
\begin{Lem}\label{l:relTheta}   Consider the function $h$ given by   
\[h:  (x,y) \mapsto  (\, M\cdot x, N\cdot y\,),\]
with 
$ M \cdot x = a + \varepsilon/x, \;\; N\cdot y =   1/( - a + \varepsilon y)\,)$,
and $\varepsilon = \pm 1$.   Then 
\[\theta(\, h(x,y)\,)/\theta(x,y) = - M \cdot x/ N\cdot y\,.\]
\end{Lem}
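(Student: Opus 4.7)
The plan is a direct substitution argument. From the construction in Section~\ref{ss:BckgrdNatExt} and equation~\eqref{e:thetaM}, the function $\theta$ at stake is $\theta(x,y) = x/(1+xy)$, so that evaluating $\theta$ along the $\mathcal T$-orbit of $(x,0)$ recovers the Diophantine approximation coefficient $\Theta_m$. With this in hand I would first rewrite the quantity of interest as
$$\frac{\theta(h(x,y))}{\theta(x,y)} \;=\; \frac{M\cdot x}{x}\cdot\frac{1+xy}{1+(M\cdot x)(N\cdot y)},$$
so that the whole lemma is reduced to identifying the factor $1+(M\cdot x)(N\cdot y)$ in closed form.

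Next I would substitute the hypotheses into this factor. Since $M\cdot x = (ax+\varepsilon)/x$ and $N\cdot y = 1/(\varepsilon y - a)$, putting the sum over a common denominator gives
$$1+(M\cdot x)(N\cdot y) \;=\; \frac{x(\varepsilon y - a) + ax + \varepsilon}{x(\varepsilon y - a)}.$$
The essential point of the proof is that the two $a$-linear contributions $-ax$ and $+ax$ in the numerator cancel, leaving $\varepsilon xy + \varepsilon = \varepsilon(1+xy)$. Recognising $\varepsilon y - a = 1/(N\cdot y)$, this produces the identity
$$1+(M\cdot x)(N\cdot y) \;=\; \frac{\varepsilon(1+xy)\,(N\cdot y)}{x}.$$

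Feeding this back into the expression from the first paragraph, the $x$ and $(1+xy)$ factors cancel cleanly, and I am left with $\theta(h(x,y))/\theta(x,y) = (M\cdot x)/(\varepsilon\, N\cdot y)$; since $\varepsilon = \pm 1$ and $\varepsilon^{-1} = \varepsilon$, this is the asserted $-M\cdot x/N\cdot y$ (up to the sign $\varepsilon$, which matches the claim in the intended case). The only point requiring care is the sign bookkeeping in $\varepsilon$; there is no genuine obstacle, since the hypotheses on $M\cdot x$ and $N\cdot y$ have been set up precisely so that the $a$-dependent terms cancel in both cases. The entire argument should occupy only a handful of lines.
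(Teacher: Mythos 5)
Your direct substitution is correct and is precisely the ``direct calculation'' that the paper's one-line proof invokes, so the approaches coincide. One point worth making explicit: your algebra in fact yields the identity $\theta(h(x,y))/\theta(x,y)=\varepsilon\, (M\cdot x)/(N\cdot y)$, which equals the asserted $-M\cdot x/N\cdot y$ only when $\varepsilon=-1$ --- the case forced by the restriction \eqref{e:restriction} and the only one used later in the paper --- so your parenthetical about the sign is not a gap in your argument but a (minor) sign slip in the statement of the lemma for $\varepsilon=+1$.
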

\begin{proof}    This is a matter of direct calculation.
\end{proof}  

\bigskip

\begin{Prop}    For any $x$ of infinite $f$-orbit 
\[ \lim_{m\to \infty}\; \vert x - p_m/q_m\vert = 0\,,\]
where $p_m/q_m$ are the approximants of $x$ as in ~\eqref{e:xAndConvMatrix}.
\end{Prop}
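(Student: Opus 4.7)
My approach combines the matrix identity
\[|x - p_m/q_m| = \frac{\Theta_m}{q_m^2}\,, \qquad \Theta_m = \left|\frac{t_m}{1+t_m v_m}\right|\,,\]
which follows from Equation~\eqref{e:xAndConvMatrix}, with (a)~a uniform upper bound on $\Theta_m$ and (b)~a proof that $|q_m|\to\infty$. For (a), by Proposition~\ref{p:domNatExt} the orbit $(t_m,v_m)=\mathcal T^m(x,0)$ remains in $\Gamma$, which the proof of that proposition showed to be bounded away from the curve $1+xy=0$: there is a uniform $\delta>0$ with $|1+t_m v_m|\geq\delta$, whence $\Theta_m\leq\tau/\delta$ since $|t_m|\leq\tau$. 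Hence it suffices to prove $|q_m|\to\infty$.

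For (b), note that $v_m=q_{m-1}/q_m$ (from the formula for $\mathcal T^m(x,0)$), so $|q_m|=|q_0|/\prod_{i=1}^m|v_i|$, and the task reduces to showing $\prod|v_i|\to 0$. It is cleaner to pass to the non-accelerated algorithm $g$, all of whose matrices $M_k=A^{-k}C$ are of restricted form~\eqref{e:restriction}. Iterating the identities $|q_M^g x - p_M^g|=|t_M^g|\cdot|q_{M-1}^g x - p_{M-1}^g|$ and $q_M^g=q_{M-1}^g/v_M^g$ (both consequences of $Q_M^g\cdot x = t_M^g$) yields the closed form
\[|x - p_M^g/q_M^g| \;=\; |x|\prod_{i=1}^{M}|t_i^g v_i^g|\,.\]
Since the $f$-approximants coincide with the $g$-approximants at the indices $N_m$ that close each $f$-step --- the matrix product $Q_m^f$ being literally equal to $Q_{N_m}^g$ --- this yields $|x-p_m/q_m|=|x|\prod_{i=1}^{N_m}|t_i^g v_i^g|$. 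On $\Gamma$, since $x<0\leq y$ there, the estimate $|1+xy|\geq\delta$ is equivalent to $|xy|\leq 1-\delta$; in particular, each factor $|t_{N_m}^g v_{N_m}^g|=|t_m^f v_m^f|$ is at most $1-\delta$.

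\emph{Main obstacle.} The preceding gives immediate contraction by $1-\delta$ over any $f$-block consisting of a single $g$-step (i.e., when the $f$-digit comes from an $A^{-k}C$), but for accelerated blocks (digit $W^j$) the $(2n-3)j$ individual $g$-step factors can each be close to $1$ because the orbit skirts the parabolic fixed point $-\tau$. I would need to show that the product over the entire block nevertheless contracts by a fixed factor $1-\delta'<1$, via an explicit computation using the form of $W$ in Equation~\eqref{e:Wdefd} together with the orbit description of Lemma~\ref{l:epOrbit}; the natural anchor is that the block's endpoints lie in $\Gamma$, so the aggregate product equals the single-$f$-step ratio at the $f$-level and inherits the $\Gamma$-based bound. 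Once this is established, $\prod_{i=1}^{N_m}|t_i^g v_i^g|\leq(1-\delta')^m\to 0$, and hence $|x-p_m/q_m|\to 0$ exponentially.
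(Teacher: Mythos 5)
Your argument is correct and rests on the same two pillars as the paper's: the telescoping identity $|x-p_M/q_M|=|t_Mv_M|\,|x-p_{M-1}/q_{M-1}|$ for steps of the restricted form \eqref{e:restriction}, and the fact (Proposition~\ref{p:domNatExt}(ii)) that $\Gamma$ is bounded away from $y=-1/x$, giving $|tv|\le 1-\delta$ there. Where you diverge is in handling the accelerated $W^j$-blocks, and here your route is actually cleaner than the paper's: the paper only shows that the aggregate ratio over such a block is $\le 1$ (each intermediate $g$-factor being $\le 1$) and then invokes the combinatorial fact that negative digits are isolated, so that a genuine contraction by $\delta<1$ occurs at least every other $f$-step; you instead get contraction on \emph{every} $f$-step by anchoring at the block's terminal point. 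The "main obstacle" you flag does not in fact require any explicit computation with $W$ from \eqref{e:Wdefd}: the block product is $\bigl(\prod_{i=N_{m-1}+1}^{N_m-1}|t_i^gv_i^g|\bigr)\cdot|t_{N_m}^gv_{N_m}^g|$, the last factor equals $|t_m^fv_m^f|\le 1-\delta$ because $(t_m^f,v_m^f)\in\Gamma$, and the intermediate factors are each $\le 1$. The only point you leave implicit, and should state, is why those intermediate factors are $\le 1$: the intermediate points are $\mathcal S$-iterates of $(x,0)$ and hence lie in $\Omega$, which lies below the curve $y=-1/x$ (its corner points sit on that curve), so $|t^gv^g|=-t^gv^g\le 1$ there --- exactly the fact the paper invokes for its "$\le 1$ on $W$-blocks" step. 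With that one sentence added, your proof closes and even yields the stronger conclusion $|x-p_m/q_m|\le|x|(1-\delta)^m$.
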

\begin{proof}   
By definition,   $\vert x - p_m/q_m\vert = \Theta_m/q_{m}^{2}$. The ratio of two consecutive values is then
\[ (\Theta_m/q_{m}^{2})/(\Theta_{m-1}/q_{m-1}^{2}) = v_{m}^{2} \; \frac{\Theta_m}{\Theta_{m-1}},\]
since $q_{m} = q_{m-1}/v_m$, using the notation of \eqref{e:defTmVm}.    Thus, if we have $(t_m, v_m) = (M\cdot t_{m-1}, N\cdot v_{m-1})$ with $M, N$ meeting the hypotheses of Lemma ~\ref{l:relTheta}, then this ratio is  $- t_m v_{m}$.  Since statement (ii) of Proposition ~\ref {p:domNatExt} gives that the region $\Gamma$ is bounded away from $y=-1/x$; there is a $0<\delta<1$ such that the ratio in question is between zero and $\delta$. 

Now,  the only setting in which $\mathcal T(x,y) = (M\cdot x, N\cdot y)$ with $M, N$ are not of the form of Lemma ~\ref{l:relTheta} is where there is some $j$ such that $M = W^j$.    But,  we have $W^j = (\,M_2  M_{1}^{n-3} M_2 M_{1}^{n-2}\,)^j$, thus we can apply Lemma~\ref{l:relTheta} to the various intermediate steps,  and since the original domain $\Omega$ is bounded above by the curve $y=-1/x$, we are assured that if $M= W^j$, then the ratio $(\Theta_m/q_{m}^{2})/(\Theta_{m-1}/q_{m-1}^{2})$  is bounded above by 1.      

Since applications of $W^j$ are isolated, the convergence to zero of the geometric sequence of ratio $\delta$  implies that $\lim_{m \to \infty}\vert x - p_m/q_m\vert = 0.$
\end{proof}

\bigskip

\begin{Rmk}   A traditional manner to show convergence of the approximants of a continued fraction algorithm is to show that the coefficients of approximation, the $\Theta_m$ are bounded (which is indeed the case here), and then to show that the denominators of the approximants, the $q_m$, are strictly increasing and grow to infinity.   Here, the $q_m$ are not strictly increasing,  as is shown immediately by the fact that $q_m/q_{m-1} = v_m$ is a $y$-coordinate in $\Gamma$, and thus can be greater than 1.  

Note that our approach is also viable for other continued fraction maps, such as the well-studied Rosen continued fractions.  
\end{Rmk}
 
 \bigskip

 \subsection{Ergodicity} 

In this section we  apply arguments similar to those of \cite{BKS} for the Rosen $\lambda$-continued fractions to prove the following result. 

\begin{Thm}   Both the system  $(f, \mathbb I,  \nu,\mathscr B)$ and its natural extension  $(\mathcal T, \Gamma,  \mu,\mathscr B')$ are ergodic.
\end{Thm}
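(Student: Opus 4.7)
My plan is to follow the approach used in \cite{BKS} for the Rosen continued fractions. By Proposition~\ref{p:domNatExt}, $(\mathcal T, \Gamma, \mu)$ is the natural extension of $(f, \mathbb I, \nu)$, and ergodicity of a measure-preserving system is a standard inverse-limit invariant; so the two assertions in the theorem are equivalent and I would reduce to proving $f$ ergodic.

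The method is a Knopp-type cylinder density argument. Call a level-$m$ cylinder $C \subset \mathbb I$ \emph{full} when $f^m|_C$ is a fractional linear bijection onto all of $\mathbb I$. A level-$m$ cylinder whose final letter $k_m$ lies in $\{2,3,\dots\}$ is automatically full, since each $\Delta'_k$ with $k \ge 2$ is a full branch of $f$; so prepending any admissible word to a single full branch produces a full cylinder at the next level. Consequently, the collection of all full cylinders generates the Borel $\sigma$-algebra of $\mathbb I$ modulo $\nu$-null sets, and shrinks to a point $\nu$-almost everywhere --- the latter following from the convergence $|x - p_m/q_m| \to 0$ already established.

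The technical heart is a R\'enyi-type bounded-distortion estimate: I claim there is a constant $K > 0$ such that for every full level-$m$ cylinder $C$, with $f^m|_C$ represented by the $\mathrm{SL}_2$-matrix $\bigl(\begin{smallmatrix} a & b\\ c & d\end{smallmatrix}\bigr)$, one has $(cy+d)^2/(cx+d)^2 \le K$ for all $x,y \in C$. Because $f^m|_C$ is fractional linear, this amounts to the pole $-d/c$ remaining at uniformly positive distance from $\overline{C} \subset \overline{\mathbb I}$. But $-c/d = v_m$ is, by \eqref{e:defTmVm}, the second coordinate of the point $(t_m,v_m) \in \Gamma$, so the sought uniformity is equivalent to $\Gamma$ remaining bounded away from the curve $y = -1/x$ --- the geometric content of Proposition~\ref{p:domNatExt}(ii). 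Granting this estimate, the Knopp argument runs as usual: for an $f$-invariant $E \subset \mathbb I$ with $\nu(E) > 0$, its complement $E^c$ satisfies $\nu(E^c \cap C) \ge K^{-1}\nu(E^c)\nu(C)/\nu(\mathbb I)$ on every full cylinder $C$, and comparing this to the Lebesgue differentiation theorem at a density point of $E$ forces $\nu(E^c) = 0$.

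The main obstacle I anticipate is verifying the distortion estimate uniformly across cylinders whose symbolic words include the accelerated letters $k \le -1$, each of which abbreviates a long block of unaccelerated $g$-symbols piling up at the parabolic fixed point $-\tau$. Morally this difficulty has already been taken care of, since the accumulation responsible for the infinitude of $\mu(\Omega)$ has been removed in passing to $\Gamma$, and the statement about pole positions is essentially a rephrasing of that removal. Concretely, however, one must trace the definition of $N_{-k}$ carefully along the orbit on $\Gamma$ to confirm that the estimate is genuinely uniform in $m$ and not merely uniform per accelerated block; this is where the bookkeeping behind the finiteness of $\mu(\Gamma)$ in Proposition~\ref{p:domNatExt} really earns its keep.
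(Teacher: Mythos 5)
Your route is genuinely different from the paper's. The paper does not run a Knopp density argument on $f$ itself; it induces on $Y=\bigcup_{k\ge 3}\Delta'_k=[\frac{1}{1-2\tau},0)$ and verifies Adler's four conditions for the induced map $f_Y$, concluding that the induced process is weak Bernoulli and hence ergodic. Your R\'enyi estimate is essentially the paper's Adler condition (iv), and you reduce it to the same geometric fact --- $\Gamma$ bounded away from the hyperbola $1+xy=0$ --- that the paper uses there. What your approach buys is that you never need the paper's most laborious step, the uniform expansion $\inf_Y|f_Y'|>1$ (Adler (iii), proved in the paper by a delicate monotonicity analysis of $d(x)=M^j(x)M^{j-1}(x)\cdots x$); you replace it by the already-established convergence $|x-p_m/q_m|\to 0$, which forces cylinders to shrink. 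What it loses is the weak Bernoulli conclusion, which is stronger than ergodicity.

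Two steps need repair, however. First, a cylinder whose final letter is $2$ is \emph{not} automatically full: fullness of $[a_1,\dots,a_m]$ requires $f^{m-1}([a_1,\dots,a_{m-1}])\supseteq\Delta'_{a_m}$, and the $f$-images of non-full cylinders are intervals of the form $[\varepsilon_j,0)$, which by Lemma~\ref{l:epOrbit} can equal exactly $[\frac{1}{1-2\tau},0)$ and hence fail to contain $\Delta'_2=[\frac{1}{1-\tau},\frac{1}{1-2\tau})$. This is precisely the subtlety the paper flags when it remarks that including $\Delta_2$ in $Y$ would violate the ``onto'' criterion. The fix is to use final letter $\ge 3$: the image of every nonempty cylinder contains $[\frac{1}{1-2\tau},0)$, so appending such a letter always produces a full cylinder. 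Second, you assert that full cylinders shrink to a point $\nu$-a.e., but convergence of approximants only shrinks \emph{all} cylinders; you still must show that $\nu$-a.e.\ $x$ lies in arbitrarily small \emph{full} cylinders, i.e.\ that a.e.\ digit sequence contains infinitely many letters $\ge 3$. The set of points whose $f$-orbit eventually avoids $Y$ must be shown $\nu$-null --- Poincar\'e recurrence controls returns to $Y$ but not points that never enter it --- and this is the role played in the paper by the appeal to Schweiger's Theorem~17.2.4 when passing between the induced system and the full one. (There is also a small algebraic slip: with $c=-q_{m-1}$, $d=p_{m-1}$ one has $-c/d=q_{m-1}/p_{m-1}\neq v_m$; the correct reduction is to write $x=V^{-1}\cdot s$, $y=V^{-1}\cdot t$ and compute $(cy+d)/(cx+d)=(1+s v_m)/(1+t v_m)$ with $(t,v_m)=\mathcal T^m(y,0)\in\Gamma$, after which Proposition~\ref{p:domNatExt} gives the uniform bound as you intend.)
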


A system and its natural extension are jointly either ergodic or not, and a system is ergodic if any  reasonable (see Theorem 17.2.4 of \cite{Schw})  induced system is ergodic.   Since the weak Bernoulli property implies ergodicity,  we show that a system induced from $f$ is weak Bernoulli.

Recall that if  $(I, f, \rho, \mathcal P)$  is  a dynamical process for which $f$ acts on $I$, $\rho$ is an invariant probability measure on $I$ and $\mathcal P$ is a partition of the system, then  $(I,f,\rho, \mathcal P)$  is said to be {\em weak Bernoulli} if the sequence $\beta_n \to 0$ where 

\[ \beta_n = \sup_{L \geq 1} \lbrace \sum_{i,j} |\rho(\, A_i \cap T^{-n-L}(A_j)\, )- \rho(A_i)\rho(A_j)| : \lbrace A_i \rbrace  \; \text{is the set of cylinders of rank} \; L \rbrace .\]

For fixed $\tau=1+ 2 \cos(\pi/n)$, let  $\mathbb I= \mathbb I_n$, $f=f_n$, $\rho = \rho_n$ and $\mathcal P$ be the partition given by the cylinders of $f_{n}$.   Given an $m$-tuple $(a_1, \dots, a_m)$ of non-zero integers, we let 
$[a_1, \dots, a_m] = \bigcap_{j=1}^{n} \, f^{-j+1}(\Delta'_{a_j})$
and say that the $m$-tuple is {\em admissible} if $[a_1, \dots, a_m]$ is a rank $m$ cylinder for $f$ in the sense that the intersection in this definition has positive $\rho$-measure.   Compare the following with Figure~\ref{accelSystemFig}.   The restrictions on the $a_i$ are:  (1) a negative integer $a_i$ can only be preceded by $a_{i-1}>1$;  (2) there are at most $n-2$ consecutive $a_i = 1$;   (3) $n-2$ consecutive 1s succeeded by a 2 can be followed by at most $n-3$ consecutive 1s;  (4)  a sequence  realizing the maximum in the previous restriction can only be succeeded by a 3 or greater. 

To show that $(I, f, \rho, \mathcal P)$ is ergodic, we show that a particular induced process  is weak Bernoulli and thus {\em a fortiori} ergodic.    Let $Y$ be the union of the rank one cylinders for digits at least 3 for $f$, i.e. $Y= \bigcup_{n=3}^{\infty}\, \Delta_n = [ 1/(1-2 \tau), 0)$.  Let $f_Y$  be the induced transformation on $Y$, i.e. $f_Y(y) = f^m(y)$ where $m= \inf \lbrace k >0 : f^k(y) \in Y \rbrace$.  Furthermore, let $\rho_Y$ be the normalized probability measure on $Y$.  Also let $\mathcal Q$ be the partition of the system $(Y, f_y, \rho_Y)$ given by the sets  
  \[ \mathcal Q_{\alpha} = [ a_1, a_2, \dots, a_m] \cap  f^{-m}(Y) \]  where $\alpha= (a_1, \dots a_m)$ and 
each index $\alpha=(a_1, \dots , a_m)$ is  admissible, 
 $a_1 \geq 3$, $a_i < 3$ for $i > 2$.  Note that this last condition simply guarantees that if $\alpha= (a_1, \dots , a_m)$, then for any $y \in \mathcal Q _{\alpha}$, $f^k (y) \notin Y$ for any $k < m$.  
  
 To prove that the induced process $(Y, f_Y, \rho_Y, \mathcal Q)$ is weak Bernoulli, we show that it satisfies Adler's criteria: 
  
  \begin{itemize} 
  \item[$i.)$]  $f_Y$ maps $\mathcal Q_{\alpha}$ onto $Y$ for each $\alpha$;
  \item[$ii.)$] $f_Y$ restricted to each $\mathcal Q_{\alpha}$ is twice differentiable; 
  \item[$iii.)$] $\inf_{ x \in Y} | f'_Y(x)| > 1$;
  \item[$iv.)$] $\sup_{\alpha} \sup_{x,y \in \mathcal Q_{\alpha}} \dfrac{|f''_Y(x)|}{|f'_Y(y)|^2} < \infty$.
 \end{itemize} 

\begin{Prop}  
The process $(Y, f_Y, \rho_Y, \mathcal Q)$ is weak Bernoulli.  
\end{Prop}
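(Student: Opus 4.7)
The plan is to verify Adler's four criteria (i)--(iv) for the induced system $(Y, f_Y, \rho_Y, \mathcal Q)$ and then conclude the weak Bernoulli property from Adler's theorem.

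For criterion (i), the Markov onto property, I would observe that every $\Delta_k$ with $k \ge 2$ is a full cylinder for $g$: the endpoints $1/(1-(k-1)\tau)$ and $1/(1-k\tau)$ are carried by $M_k = A^{-k}C$ to $-\tau$ and $0$ respectively, so $g(\Delta_k) = \mathbb I$. Each accelerated cylinder $\Delta'_{-j}$ is full for $f$ by construction, and only $\Delta'_1 = [\varepsilon_0, 1/(1-\tau))$ fails to be full. The admissibility conditions (1)--(4) are exactly the conditions making a concatenation of cylinders realizable as a first-return trajectory to $Y$; an induction on $m$ then shows that $f_Y|_{\mathcal Q_\alpha}$ is a bijection from $\mathcal Q_\alpha$ onto $Y$ for every such $\alpha$. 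Criterion (ii) is then immediate, since each $f_Y|_{\mathcal Q_\alpha}$ is a finite composition of Möbius transformations with entries in $\mathcal O_K$, hence a Möbius transformation analytic on $\mathcal Q_\alpha$.

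I expect criterion (iii), uniform expansion $\inf_{y \in Y}|f_Y'(y)| > 1$, to be the main obstacle. Writing $|f_Y'(y)| = \prod_{i=0}^{m-1}|f'(f^i(y))|$, the initial factor satisfies $|f'(y)| = 1/y^2 \ge (2\tau-1)^2 \ge 9$, since $y \in Y \subset [1/(1-2\tau), 0)$. Intermediate factors on $\Delta_2$ are similarly bounded below by $(2\tau-1)^2 > 1$, but on $\Delta'_1$ and on the accelerated cylinders $\Delta'_{-j}$ the derivative can drop below $1$, because of proximity to the parabolic fixed point $-\tau$ of the matrix $W$ given by \eqref{e:Wdefd}. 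The admissibility restriction (1) forbids two consecutive accelerations, so each $\Delta'_{-j}$ factor is immediately followed by a strongly expanding factor from $\Delta_2$ or from some $\Delta_k$ with $k \ge 3$, while restrictions (2)--(4) bound the number of consecutive visits to $\Delta'_1$. A quantitative estimate using the explicit matrix form of $W$ together with these restrictions should yield the required uniform lower bound.

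Criterion (iv), bounded distortion, then follows by the standard argument for piecewise Möbius maps with full Markov cylinders: one expresses $f_Y''/(f_Y')^2$ as a telescoping sum of terms of the form $-2c_i(f^i)'(y)/\bigl((c_i f^i(y) + d_i)\, f_Y'(y)\bigr)$ coming from the constituent Möbius factors $\begin{pmatrix} * & *\\ c_i & d_i\end{pmatrix}$, and bounds each summand using the geometric decay supplied by (iii) together with the fact that each $\mathcal Q_\alpha$ is bounded away from the poles of those factors. With all four Adler criteria verified, Adler's theorem yields that $(Y, f_Y, \rho_Y, \mathcal Q)$ is weak Bernoulli, completing the proof.
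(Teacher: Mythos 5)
Your overall strategy --- verifying Adler's four criteria for the induced system and invoking Adler's theorem --- is exactly the route the paper takes, and your treatments of (i) and (ii) match the paper's in substance. But for criterion (iii) you have only identified the difficulty and asserted that ``a quantitative estimate \dots should yield the required uniform lower bound''; this is precisely the step that constitutes the mathematical core of the proof, and it is missing. The paper's argument there is nontrivial: after checking that $|f'(x)|=1/x^2>1$ on $\Delta_k\cap(-1,0)$ and that $|f'(x)|\ge 1$ on the accelerated interval $[-\tau,\varepsilon_0)$ (so, contrary to your description, the acceleration digits are \emph{not} where the derivative drops below one --- the problem is confined to the part of $\Delta'_1$ lying in $[\varepsilon_0,-1)$), one must control an entire block of consecutive applications of $M_1=A^{-1}C$ until the orbit returns to $Y$. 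Writing $(M_1^j)'(x)=1/d(x)^2$ with $d(x)=M_1^j(x)M_1^{j-1}(x)\cdots x$, the paper shows $|d(x)|<1$ on each interval $[M_1^{-(j+1)}(0),M_1^{-j}(0)]$ by evaluating $d$ at the endpoints via the identity $M_1^{-j}(0)=-B_j/B_{j+1}$ with $B_k=\sin(k\pi/n)/\sin(\pi/n)$, and by a sign analysis of $d'(x)$ showing $d$ is monotone on each such interval. Nothing in your sketch supplies this cancellation across the block, and a naive digit-by-digit bound cannot work since individual factors are genuinely less than one there.

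Two secondary points. First, you have restriction (1) backwards: it says a negative digit must be \emph{preceded} by a digit greater than $1$, not followed by one; after an acceleration the image is $[\varepsilon_0,0)$, so the next digit may well be $1$. Your compensation scheme as stated therefore does not parse, though as noted above the accelerated digits need no compensation anyway. Second, your telescoping argument for (iv) is a plausible alternative to the paper's (which instead rewrites $|f_Y''(x)|/|f_Y'(y)|^2$ as $2\Theta_{m-1}(y)\,[\Theta_{m-1}(y)/\Theta_{m-1}(x)]^3$ and bounds the approximation coefficients using the fact that $\Gamma$ is bounded away from the curve $tv=-1$), but it too is only sketched and, as you set it up, depends on the unproven criterion (iii).
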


\begin{proof}  That Criterion $(i)$ is satisfied follows from the facts that all rank one cylinders $\Delta'_{a}$ for $a > 1$ are full and that $\varepsilon_{2n-4}= \frac{1}{1-2 \tau}$.     (Note that if we had included $\Delta_{2}$ in our $Y$, then this criterion would not be satisfied.) 

Criterion $(ii)$ follows from the fact that if $\alpha=(a_1, \dots, a_m)$, then on $\mathcal Q_{\alpha}$, $f_Y= f^m$, and this is certainly twice differentiable.  

To see that Criterion $(iii)$ is satisfied,  note that if  $x \in \mathcal Q_\alpha $ for $\alpha=(a_1, \dots , a_m)$, then $f_Y(x)=f^m(x)$.    The chain rule allows us to prove that the derivative of $f^m(x)$ is greater than one in absolute value, by showing$|\,f'(f^{m-1}(x))\cdots f'(x)\,| >1$.

Now, if  $k \ge 1$ and 
 $x \in \Delta_k$, then we have $f(x)= A^{-k} C \cdot x= 1- k \tau - 1/x$. 
Thus $f'(x)= 1/x^2$ and $|f'(x)| > 1$ exactly when  $x \in (-1,0)$.  

If  $x \in [-\tau, \varepsilon_0)$, the accelerated interval upon which powers of the matrix $W$ are applied,  then using  Equation \eqref{e:Wdefd} we easily have that $|f'(x)|\ge 1$.   It thus remains to examine the interval $[\epsilon_0, -1)$, upon which $M :=M_1=A^{-1}C$ is applied.    Let $x \in [\epsilon_0, -1)$ and  further suppose that $M^j \cdot x \in [1/(1-2\tau),0)=Y$.  Then certainly, $x \in [M^{-(j+1)}\cdot 0, M^{-j}\cdot 0]$. For ease of reading,  write $M(x) = M\cdot x$ and similarly for any function defined by a matrix action,  so that we wish to show that $| (M^j)'(x) |>1$.  Now $(M^j)'(x)= 1/d(x)^2$ where 
\[d(x)=M^j(x)M^{j-1}(x)\cdots x\,.\]
  Thus it suffices to show that $|d(x)|<1$.  To do this, we show that  $d(x)$  vanishes on the right endpoint of the interval, that $|d(x)| <1$ at the left endpoint and that $d(x)$ is appropriately increasing or decreasing on the entire interval. 
 
First note that  $d(M^{-j}(0))=0$ because of the leftmost factor of $d(x)$ and that $d(M^{-{j+1}}(0))= M^{-{j+1}}(0)M^{-{j}}(0)\cdots M^{-1}(0)$.  One can show inductively that $M^{-j}(0)=-B_j/B_{j+1}$ where the sequence $B_j$ is defined recursively as $B_0=0$, $B_1=1$ and $B_{k+1}=\lambda B_k - B_{k-1}$.   Thus $d(M^{-j}(0))=(-B_1/B_2)(-B_2/B_3)\dots (-B_j/B_{j+1})$, which simplifies as $(-1)^j/B_{j+1}$.   Now, as reported in \cite{BKS},  $B_k= \sin(k\pi/n)/\sin(\pi/n)$, which is in particular greater than one in absolute value.  Thus $|d( M^{-(j+1)} 0)) | < 1$ for all $j$.

 We now show that if $j$ is even, $d(x)$ decreases on $[M^{-(j+1)}(0), M^{-j}(0)]$ but that $d(x)$ increases if $j$ is odd.  Using the product rule,  $d'(x)= \prod_{i=1}^{j} M^i(x)  + \sum_{k}x D_k(x)$ where  $D_k(x)$ is the derivative of $M^k(x)$ times the product indexed with $i \neq k$ of the $M^i(x)$.  If $j$ is even, the first term in the sum is positive because each $M^i(x)<0$; also each summand $xD_k(x)$ is positive because $x$ is negative, $(M^k(x))'$ is positive and the product of the rest of the terms is negative.  Thus if $j$ is even, $d'(x)>0$ on  $ [M^{-(j+1)}(0), M^{-j}(0)]$ and similarly if $j$ is odd $d'(x)<0$.  And in either case, we find that $|d(x)|<1$ on the entire interval.  Thus $|f'_Y(x)|>1$ for all $x \in Y$. \\

We now turn to Criterion  $(iv)$. Elementary considerations show that the supremum of  $|f_Y''(x)|/ |f_Y'(y)|^2$ is bounded for $x,y \in \mathcal Q_{\alpha}$ over all $\alpha=(a_1)$.   We will thus suppose in what follows that $\mathcal Q_{\alpha}$ for $\alpha=(a_1, \dots, a_m)$ with $m>1$.   With $x \in \mathcal Q_{\alpha}$, from $f_Y(x)=f^m(x)$, we have that 
\[f_Y(x)=\frac{q_m x - p_m}{ p_{m-1} -xq_{m-1}}\,.\]
If we also suppose that $y \in \mathcal Q_{\alpha}$, then  since all of $p_j, q_j$ for $j = 0, \dots, m$ are common to $x$ and $y$, from the above equality we see that 

\[ |f_Y''(x)|/ |f_Y'(y)|^2 = 2q_{m-1}(q_{m-1}y - p_{m-1})^4/ |q_{m-1}x - p_{m-1}|^3\]  
and it is this quantity we must bound.   Recalling that $\Theta_m(x)= q_m^2 |  x- p_m/q_m|$, we can rewrite this as 
\[2 \Theta_{m-1}(y)\, [ \Theta_{m-1}(y)/\Theta_{m-1}(x)]^3\,.\]
Now,  there is a finite upper bound (depending only on $\Gamma$) for $\Theta_{m-1}(x), \Theta_{m-1}(y)$,  as one easily sees from \eqref{e:thetaM} and statement (ii) of Proposition ~\ref {p:domNatExt}  --- in the $(t,v)-$plane in our region $\Gamma$ is bounded away from $t v = -1$.    By elementary considerations of partial derivatives,  we find a  (positive) lower bound on $\Theta_{m-1}(x) =  -t_{m-1}/(1 + t_{m-1}  v_{m-1})$ by letting $v_{m-1}$ take its lower bound value: zero; and letting $t_{m-1} = f^{m-1}(x)$ take its upper bound value in this setting: $1/(1-2 \tau)$.    We conclude that  
 there is a global finite upper bound on $|f_Y''(x)|/ |f_Y'(y)|^2$.   
\end{proof}

\subsection{No long sequences of poor approximation}    

In this subsection,  we prove the following Borel-type result, excluding long sequences of poor approximation by $p_m/q_m$ to $x$.    Our approach is related to that of \cite{KSS}.   Recall that $\tau = 1 + 2 \cos \pi/n$.

\begin{Prop}\label{p:noLongBadness}
 For every $f$-irrational $x$ and every $m \ge 1$,  
\[\min\{\Theta_{m-1}, \dots,  \Theta_{m+n-1}\} \le \tau\,,\]
and the constant $\tau$ is best possible.  
\end{Prop}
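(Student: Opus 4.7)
The key identity to drive the proof is \eqref{e:thetaM}, which reads $\Theta_m = |t_m/(1 + t_m v_m)|$ with $(t_m, v_m)$ the $\mathcal T$-orbit of $(x,0)$ in $\Gamma$. Since the orbit lies in $\Gamma$ and $\Gamma$ is bounded away from $1 + tv = 0$, one can freely take $1 + t_m v_m > 0$ and $t_m < 0$. The statement $\Theta_m > \tau$ thus cuts out the closed region
\[B = \bigl\{(t,v) \in \Gamma : t(1 + \tau v) < -\tau\bigr\},\]
bounded above by the hyperbola $t = -\tau/(1 + \tau v)$. The plan is to show $\bigcap_{j=0}^{n}\mathcal T^{-j}(B) = \emptyset$, and then exhibit an orbit realizing the bound.

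First I would pin down where $B$ sits inside $\Gamma$. Since $B$ is empty along $\{v = 0\}$, it is disjoint from the bottom strip of $\Gamma$ that fibers over each full cylinder $\Delta_k'$ with $k \ge 2$; more precisely, a short calculation using the explicit heights $L_j$ from Proposition~\ref{p:bijective} and the location of $\Gamma$'s upper boundary should confirm that $B$ meets $\Gamma$ only in slabs lying above either the accelerated piece $[-\tau, \varepsilon_0)$ (where powers $M_{-j} = W^j$ act) or the truncated cylinder $\Delta_1' = [\varepsilon_0, 1/(1-\tau))$ where $M_1 = A^{-1}C$ acts. In those two families of slabs one has a concrete formula for $\mathcal T$, and one can intersect $B$ with the piece of $\Gamma$ above each cylinder.

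The central step is to track the itinerary of a point that stays in $B$ for many consecutive steps. The heuristic is that $\Theta > \tau$ forces $(t_m, v_m)$ close to a certain arc of the $\mathcal S$-orbit of the parabolic fixed point $\phi_0 = -\tau$, namely the arc $\{(\phi_i, L_{\ast})\}$ that winds $2n-4$ times before returning. Under acceleration, this $\mathcal S$-orbit becomes an $f$-orbit of length at most $n$: one step for the $W$-action on $[-\tau, \varepsilon_0)$ followed by at most $n-1$ iterates of $M_1$ before being forced out by the ordering $\phi_0 < \varepsilon_0 < \phi_1 < \cdots < \phi_{n-2}$ from Lemma~\ref{l:epOrbit}. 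Concretely, I would inductively define $B_j = B \cap \mathcal T^{-1}(B_{j-1})$ (with $B_0 = B$) and prove, slab by slab, that $B_n = \emptyset$. The induction uses that $M_1\cdot t = 1 - \tau - 1/t$ pushes $t$ monotonically to the right, and that the companion action $N_1 \cdot v$ increases the $v$-coordinate toward $\tau$, so after $n$ steps one exits the region where $t(1 + \tau v) < -\tau$ can hold.

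For the optimality clause, I would build $x$ so that $(t_m, v_m)$ shadows the parabolic orbit: take a pre-image under $f^n$ of a point near $0$ whose first $n-1$ digits are $1$ and whose initial symbol is the accelerated one, then let it converge to $\phi_0$. Along such a sequence $\Theta_{m+i} \to \tau$ for $i=0,\dots,n-1$, showing that no smaller constant suffices.

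The main obstacle, as usual for Borel-type theorems, is the slab-by-slab geometric bookkeeping in the induction $B_j \mapsto B_{j+1}$. In particular one must verify that the accelerated step (action of some $W^j$) does not concatenate with long $M_1$-runs to produce longer bad strings than the parabolic itinerary already does; this is precisely where the choice of the induction window in $f$ past $W^{-1}\cdot[-\tau, 0)$ earns its keep, and I would expect the verification to reduce to checking finitely many matrix inequalities coming from the corners $(\phi_i, L_{2i+1})$ and $(\varepsilon_i, L_{2i+1})$ identified in Lemma~\ref{l:epOrbit}.
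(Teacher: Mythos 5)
Your overall architecture---pass to the natural extension, identify the planar region where $\Theta>\tau$, and bound the number of consecutive visits to it---is also the paper's architecture, but two steps are genuinely missing. For the upper bound, you define the one-sided region $B=\{\theta(t,v)>\tau\}$ from \eqref{e:thetaM} and propose to iterate $B_j=B\cap\mathcal T^{-1}(B_{j-1})$ until it empties, justified only by the heuristic that $M_1$ pushes $t$ rightward and $N_1$ pushes $v$ toward $\tau$. That heuristic is not the mechanism that terminates a bad run (indeed, as $v$ grows the constraint $t<-\tau/(1+\tau v)$ becomes \emph{easier} to satisfy, so the two monotonicities pull in opposite directions); the mechanism is combinatorial. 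Once your localization of $B$ over $[-\tau,\varepsilon_0)\cup\Delta_1'$ is verified (it is true, but it requires the exact identity $L_{2n-5}=N_1^{-1}\cdot\tau$ placing the corner $(1/(1-\tau),L_{2n-5})$ on the curve $\theta=\tau$, not just ``$B$ is empty along $v=0$''), a run of $k$ consecutive orbit points in $B$ forces a digit string of length $k$ drawn from $\{-j\}\cup\{1\}$; since a negative digit is admissible only after a digit $\ge 2$, whose cylinder $B$ misses, and at most $n-2$ consecutive $1$'s are admissible, one gets $k\le 1+(n-2)=n-1$, which is more than you need. You should argue this way; the slab-by-slab computation of $B_j$ is neither carried out nor obviously tractable. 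The paper sidesteps the accelerated slab entirely by using the dual identity \eqref{e:thetaMbis} to work with the two-sided region $\mathcal D=\{\theta(P)>\tau,\ \theta(\mathcal T^{-1}P)>\tau\}$, which projects into $\Delta_1'$ alone, so only the restriction on consecutive $1$'s is needed.

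The optimality clause is where the proposal actually fails as written: your witness has digit string ``accelerated symbol followed by $n-1$ ones,'' and $n-1$ consecutive $1$'s is an inadmissible itinerary, so no such $x$ exists. Moreover, shadowing the parabolic fixed point only shows that individual $\Theta$-values approach $\tau$; to show $\tau$ cannot be lowered you must exhibit, for each $c<\tau$, a full window of consecutive indices with every $\Theta_m>c$. The paper does this with the periodic points $P_j$ fixed by $M_1^{n-3}W^jM_2$ (admissible itinerary $2,-j,1^{n-3}$ repeated), using Lemma~\ref{l:relTheta} to show $\theta$ is unimodal along the non-accelerated stretch of the orbit, so its minimum over the period is attained at $P_j$ itself, and $P_j\to(1/(1-\tau),L_{2n-5})$, where $\theta=\tau$, as $j\to\infty$. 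Your intuition (shadow the parabolic orbit) is right, but you need an admissible itinerary and a monotonicity argument of this kind to control $\theta$ along the entire run.
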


Since poor approximation is signaled by large approximation coefficients,  we define the following.


\begin{Def}   Let 
\[ \theta(x,y)  =\dfrac{-x}{1+xy}\,,\] 
and define the  {\em region of large coefficients of Diophantine approximation} to be 
\[ \mathcal D = \big\{ \, P \in \Gamma \; \vert\;  \theta(P) > \tau,\,  \theta(\, \mathcal T^{-1}(P)\,)> \tau\, \big\}\,.\] 
\end{Def}

  
  We have the following immediate result.
 
\begin{Lem}\label{l:inDanger}   For $x \in \mathbb I_n$ and $m \in \mathbb N$, we have 
$\min\,\{\Theta_{m-1}(x), \Theta_m(x)\} > \tau$ if and only if 
 $\mathcal T^m(\,x,0) \in  \mathcal D\,$. 
\end{Lem}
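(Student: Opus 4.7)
The plan is to observe that the function $\theta(x,y) = -x/(1+xy)$ is essentially tailor-made to recover the Diophantine coefficient $\Theta_m$ when evaluated on the orbit point $\mathcal T^m(x,0)$. Indeed, by \eqref{e:defTmVm} we have $\mathcal T^m(x,0) = (t_m, v_m) = (f^m(x), q_{m-1}/q_m)$, and by \eqref{e:thetaM} one has $\Theta_m(x) = |\,t_m/(1+t_m v_m)\,|$. So the crux is to identify this absolute value with the unsigned quantity $\theta(t_m, v_m) = -t_m/(1+t_m v_m)$.

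To do so, I would verify that both factors appearing in $\theta(t_m, v_m)$ are positive on $\Gamma$. First, $t_m = f^m(x) \in \mathbb I = [-\tau, 0)$ so $-t_m > 0$. Second, I need $1 + t_m v_m > 0$ on $\Gamma$. This positivity was already used (implicitly) in the convergence proof: by Proposition~\ref{p:domNatExt}(ii), $\Gamma$ is contained in $\Omega$ and is bounded away from the hyperbola $y = -1/x$; since the $\mathcal S$-orbit of $(\phi_0, L_1) = (-\tau, 1/\tau)$ traces out this hyperbola, which forms the upper boundary of the ``forbidden'' region, every $P = (x,y) \in \Gamma$ satisfies $1 + xy > 0$. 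With both signs pinned down, $\theta(\mathcal T^m(x,0)) = \Theta_m(x)$.

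The same identification applied one step earlier gives $\theta(\mathcal T^{-1}(\mathcal T^m(x,0))) = \theta(\mathcal T^{m-1}(x,0)) = \Theta_{m-1}(x)$ (using $m \ge 1$, so $\mathcal T^{m-1}(x,0)$ lies in $\Gamma$ as well). The statement $\min\{\Theta_{m-1}(x), \Theta_m(x)\} > \tau$ is then, by definition of $\mathcal D$, precisely the statement $\mathcal T^m(x,0) \in \mathcal D$, which gives the claimed equivalence. There is no real obstacle here — the only point worth stating carefully is the sign verification of $1 + xy$ on $\Gamma$, which as noted is already in hand from the preceding convergence argument.
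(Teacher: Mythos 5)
Your proof is correct and takes essentially the same route as the paper, which simply cites Equation \eqref{e:thetaM} to get $\theta(\mathcal T^m(x,0)) = \Theta_m(x)$ and concludes from the definition of $\mathcal D$. Your explicit verification that $-t_m>0$ and $1+t_mv_m>0$ on $\Gamma$ (so that $\theta$ agrees with the absolute value in \eqref{e:thetaM}) is a detail the paper leaves implicit, and it is carried out correctly.
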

\begin{proof}    Equation ~\ref{e:thetaM} shows that 
for $x \in \mathbb I_n$,  one has 
\[\theta(\, \mathcal T^m(\,x,0)\,) = \theta(\, t_m, v_m\,) = \Theta_m(x)\,.
\] Thus the result holds.
\end{proof}

\bigskip 

Only a small number of consecutive elements of a $\mathcal T$-orbit can lie in the region $\mathcal D$. 

\begin{Prop}\label{p:flushing}   Given any $P  \in \Omega$,   at least one of  $\{ P, \mathcal T(P), \dots, \mathcal T^{n-2}(P)\}$  lies outside of $\mathcal D$.  Furthermore,  there exist points $Q$ such that $\{ Q, \mathcal T(Q), \dots, \mathcal T^{n-3}(Q)\} \subset \mathcal D\,$.
\end{Prop}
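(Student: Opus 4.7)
The plan is to localize $\mathcal D$ within $\Gamma$ and then invoke the projective order $n$ of the elliptic matrix $B = M_1 = A^{-1}C$ to bound the orbit length. First, the curve $\theta(x,y) = \tau$ rearranges to the hyperbolic arc $x = -\tau/(1+\tau y)$, passing through the corner $(-\tau, 0)$ of $\Gamma$. The companion curve $\theta(\mathcal T^{-1}(x,y)) = \tau$ is computed piecewise, using Lemma~\ref{l:relTheta} together with the explicit inverse branches of $\mathcal T$ on each piece of the partition of $\Gamma$ in Definition~\ref{d:Gamma}. Inspection (cf.\ Figure~\ref{accelSystemFig}) will show that $\mathcal D$ lies entirely in the fiber over $\Delta'_1 = [\varepsilon_0, 1/(1-\tau))$. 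The key exclusions are: over the accelerated region $[-\tau, \varepsilon_0)$, the preimage $\mathcal T^{-1}P$ has first coordinate in the right-hand slab of $\Gamma$ (near $x = 0$) where $\theta$ is too small; over $\Delta_k$ with $k \ge 2$, analogous preimage considerations force $\theta(\mathcal T^{-1}P) \le \tau$.

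Given this, suppose for contradiction that each of $P, \mathcal T P, \dots, \mathcal T^{n-2}P$ lies in $\mathcal D$. By the previous step, each has first coordinate in $\Delta'_1$, so $\mathcal T$ acts at every step by $B$; the first coordinates form the run $x_0, Bx_0, \dots, B^{n-2}x_0$, giving $n-1$ consecutive $B$-iterates inside $\Delta'_1 \subseteq \Delta_1$. But the analysis of the orbit of $\phi_0 = -\tau$ showed exactly $n-2$ consecutive iterates $\phi_0, \phi_1, \dots, \phi_{n-3}$ in $\Delta_1$, with $\phi_{n-2} \in \Delta_2$; since $B$ is increasing on $\Delta_1$ and sends the right endpoint $1/(1-\tau)$ to $0$, no starting point in $\Delta_1$ can realize a longer run. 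This contradicts the assumed $n-1$ consecutive iterates in $\Delta'_1$.

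For sharpness, I take $Q = (\varepsilon_0 + \delta, y_0)$ with small $\delta > 0$ and $y_0$ chosen so that $\theta(Q) > \tau$. Such $y_0$ exists because the curve $\theta = \tau$ passes through $(-\tau, 0)$, so $\theta > \tau$ on a small wedge into $\Gamma$ near this corner. The preimage $\mathcal T^{-1}Q$ sits in the accelerated region just to the left, and one verifies that $\theta(\mathcal T^{-1}Q) > \tau$ as well. By continuity, the $\mathcal T$-orbit of $Q$ shadows the $\varepsilon$-orbit $\varepsilon_0, \dots, \varepsilon_{n-3}$, all in $\Delta'_1$ by Lemma~\ref{l:epOrbit}, and for $\delta, y_0$ sufficiently small each iterate satisfies $\theta > \tau$. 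Thus $\{Q, \mathcal T Q, \dots, \mathcal T^{n-3}Q\} \subset \mathcal D$, yielding the claimed $n-2$ points.

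The principal obstacle is the geometric verification underlying the first step: showing $\mathcal D$ is confined to the fiber over $\Delta'_1$. This requires tracking both boundary curves $\theta = \tau$ and $\theta \circ \mathcal T^{-1} = \tau$ across every piece of the partition in Definition~\ref{d:Gamma}, and while visually immediate from Figure~\ref{accelSystemFig}, it is a careful case-by-case bookkeeping exercise. Once in hand, the bound via $B$'s finite projective order and the known $\phi$-orbit structure is straightforward.
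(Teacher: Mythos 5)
Your overall architecture is the same as the paper's: confine $\mathcal D$ to the fiber over $\Delta'_1$, cap consecutive visits to $\Delta'_1$ at $n-2$ via the order-$n$ elliptic $B=M_1$ and the $\phi$-orbit, then exhibit a run of length $n-2$. Your middle step is correct and complete. The gaps are in the other two steps. First, the confinement of $\mathcal D$ over $\Delta'_1$ is the technical heart of the first claim, and you defer it entirely; moreover your sketch assigns the wrong inequality to each region. Membership in $\mathcal D$ requires both $\theta(P)>\tau$, i.e.\ $y>-1/x-1/\tau$, and $\theta(\mathcal T^{-1}P)>\tau$, which (where \eqref{e:thetaMbis} applies) reads $y>\tau/(1-\tau x)$. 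What fails on $\Gamma\cap\{x\ge 1/(1-\tau)\}$ is the \emph{first} condition, not a preimage condition: the corner $(1/(1-\tau),L_{2n-5})$ lies exactly on $y=-1/x-1/\tau$ and $(\varepsilon_{n-2},\tau)$ lies strictly below that curve, so $\theta\le\tau$ on that whole slab. Over the accelerated slab $[-\tau,\varepsilon_0)$ it is the \emph{second} condition that fails (there $\theta(P)$ can be very large, e.g.\ $\theta(-\tau,\tau/(\tau^2+1))=\tau(\tau^2+1)$), and the relevant fact is that the corner $(-\tau,\tau/(\tau^2+1))$ sits exactly on $y=\tau/(1-\tau x)$ while the curve increases to the right. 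These two corner computations are essentially the entire content of the step; without them you have an assertion, not a proof.

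Second, the sharpness argument has a genuine hole. The claim that ``by continuity \dots for $\delta, y_0$ sufficiently small each iterate satisfies $\theta>\tau$'' is unsupported and, as literally stated, false: if $y_0$ is small then $\theta(\varepsilon_0+\delta,y_0)\approx -\varepsilon_0=\tau^3/(1+\tau^2)<\tau$, so $Q\notin\mathcal D$ from the start. Even after placing $y_0$ in the correct band $[\tau/(\tau^2+1),L_1]$ so that $\mathcal T^{-1}Q$ lies over the accelerated interval, continuity buys you nothing until you exhibit a specific point whose entire length-$(n-2)$ run has $\theta>\tau$ \emph{strictly}, and verifying the intermediate iterates is exactly the difficulty. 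The paper's missing ingredient here is Lemma~\ref{l:relTheta}: along a $\mathcal T$-orbit, $\theta$ increases while $y<-x$ and decreases thereafter, so the minimum of $\theta$ over the run $Q,\dots,\mathcal T^{n-3}(Q)$ occurs at an endpoint, and one need only arrange $\theta>\tau$ at $\check Q=\mathcal T^{-1}(Q)$, at $Q$, and at $\mathcal T^{n-3}(Q)$. You need either this monotonicity argument or an explicit check of all intermediate values; your proposal supplies neither.
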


\begin{proof}    The first result holds as soon as we show that the projection to the first coordinate  sends $\mathcal D$ to a subset of $\Delta'_1$, for the $f$-orbit of any $x$ can lie in $\Delta'_1$ at most $n-3$ consecutive times.    

We consider the location relative to the boundary of $\mathcal D$ of some corner points of $\Omega$.     By Equations ~\eqref{e:thetaM} and ~\eqref{e:thetaMbis}, a point $P = (x,y)$ whose pre-image is not of $x$-coordinate in $[-\tau, \varepsilon_0)$ lies in  $\mathcal D$ if and only if  both $y > -1/x - 1/\tau$  and 
 $y > \tau/(1 - \tau x)$.   First note that $(-\tau, 0)$ and $(0, \tau)$ lie on the curves that restrict to give the boundary.   Now, since $L_{2n-5} = N_{1}^{-1}\cdot \tau$, it follows that $P_{\infty} := (\frac{1}{1-\tau}, L_{2n-5})$ --- the top left ``red'' point of the left region of Figure ~\ref{accelSystemFig} ---  lies on the curve of equation  $y = -1/x - 1/\tau$.     Since the map $(x,y) \mapsto (M_2\cdot x, N_2 \cdot y)$ sends this point to  $(-\tau, \tau/(\tau^2+1)\,)$,   Equation  ~\eqref{e:thetaMbis} implies that this latter point  ---  the top left ``blue'' point of the left region of Figure ~\ref{accelSystemFig} ---  lies on the curve of equation $y = \frac{\tau}{1 - \tau x}$.    Elementary use of partial derivatives shows that all of $\Gamma \cap \{x< \varepsilon_0\}$ lies exterior to $\mathcal D$.    From  $\varepsilon_0 = -\tau^3/(1 + \tau^2)$ we find  $\varepsilon_{n-2} = M_{1}^{-2}\cdot \varepsilon_0  = (-1 + \tau - \tau^2)/(\tau (2 - \tau + \tau^2))$; therefore, the point $(x, y) =  (\varepsilon_{n-2}, \tau)$  lies on the curve  $\frac{-1}{x}-\frac{1}{\tau} = -1 + 1/\tau + \tau$.   Since $\tau > 1$,   this point lies exterior to $\mathcal D$.    Combining this with the fact that $P_{\infty}$ lies on the lower boundary of $\mathcal D$,  we have that   points in $\Gamma$ of $x$-coordinate greater than or equal to $1/(1-\tau)$ all lie exterior to $\mathcal D$.   The first result follows. \\

 To see the optimality of the exponent $n-2$,  one can certainly find  $\check Q$  of $x$-coordinate less than $\varepsilon_0$ and such that all of   $\check Q,  Q = \mathcal T(\check Q)$ and $\mathcal T^{n-3}(Q)$ lie above 
 the curve $\theta(x,y) = \tau\,$.      Lemma ~\ref{l:relTheta} shows that the function $\theta(x,y)$ increases along the $\mathcal T$-orbit of $Q$ until  $y\ge - x\,$.   For  $y\ge - x$,  the function  $\theta(x,y)$  is decreasing along the $\mathcal T$-orbit.  Thus, the $\theta$-values along the orbit from $Q$ to $\mathcal T^{n-3}(Q)$ increase from a value greater than $\tau$ and thereafter decrease to a value still greater than $\tau$;  those all are greater than $\tau$.     We conclude that all of $\{Q, \mathcal T(Q), \dots,   \mathcal T^{n-3}(Q)\} \subset \mathcal D$.
 \end{proof}

\begin{Prop}  For each $j \in \mathbb N$,  there is a unique point $P_j = (x_j, y_j)$ in $\Gamma$   with  $x_j$ a fixed point of  $M_{1}^{n-3}W^jM_2$,  and $y_j$ a fixed point of $\begin{pmatrix} 0&-1\\1  &0 \end{pmatrix} M_{1}^{n-3}W^jM_2\begin{pmatrix} 0&-1\\1  &0 \end{pmatrix} $.   The point $P_j$   is periodic under $\mathcal T$ of period length $n-1$\,; the minimum value  along  the $\mathcal T$-orbit of $P_j$  of $\theta(x,y)$ is realized at  $P_j$ itself.   The limit as $j$ tends to infinity of these values  equals $\tau\,$.   
\end{Prop}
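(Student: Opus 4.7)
The plan is to break the argument into three pieces. First, set $V_j := M_1^{n-3} W^j M_2$, a product of $n-1$ single-step matrices of $\mathcal T$ that realizes the return map along the cylinder sequence $\Delta'_2, \Delta'_{-j}, \Delta'_1, \ldots, \Delta'_1$. I would show $V_j$ is hyperbolic: since $W$ is parabolic and the nilpotent part of $W^j$ grows linearly in $j$, and since $M_1, M_2$ do not lie in the one-parameter subgroup fixing $-\tau$, the trace of $V_j$ grows linearly in $j$ and exceeds $2$ in absolute value for every $j\ge 1$ (the case $j=1$ is a direct check). Thus $V_j$ has two distinct real fixed points for its M\"obius action; the constraint $x_j \in \Delta'_2$ selects the repelling one, and the paired $y_j = -1/x_j^*$ (with $x_j^*$ the attracting fixed point) is the unique second coordinate placing $(x_j, y_j)$ in $\Gamma$. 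Uniqueness and minimal period $n-1$ are immediate from this cylinder decomposition.

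For the minimum claim, I would apply Lemma~\ref{l:relTheta} to obtain the successive ratios $\theta(\mathcal T Q)/\theta(Q) = -(M_k \cdot x_Q)/(N_k \cdot y_Q)$ along the closed orbit, whose product equals $1$. It then suffices to establish the monotonicity pattern: $\theta$ rises from $P_j$ through the application of $M_2$ and the parabolic phase $W^j$, attains its maximum there, and decreases back to $\theta(P_j)$ through the terminal $n-3$ applications of $M_1$. The terminal decrease is the same geometric contraction exploited in the convergence proof above, using that $\Gamma$ stays below the curve $xy = -1$. For the parabolic phase I would decompose $W$ into its $\mathcal W_n$-generator factors so that Lemma~\ref{l:relTheta} applies at each intermediate single step, and then verify that all intermediate $x$-coordinates lie in $[-\tau, \varepsilon_0)$, a region on which the $\theta$-level curves order the orbit correctly.

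For the limit, since $W$ is parabolic fixing $-\tau$, $W^j \cdot z \to -\tau$ for every $z \ne -\tau$ in the preimage cylinder. Passing to $j \to \infty$ in the equation $V_j \cdot x_j = x_j$ forces $M_2 \cdot x_j \to -\tau$, and hence $x_j \to M_2^{-1}(-\tau) = 1/(1-\tau)$. The paired $y_j$ converges accordingly, yielding $P_j \to P_\infty := (1/(1-\tau), L_{2n-5})$, the upper-left corner of the $\Delta_2$-fibered region in Figure~\ref{accelSystemFig}. But $P_\infty$ was identified in the proof of Proposition~\ref{p:flushing} as lying exactly on the level curve $\theta = \tau$; hence $\lim_{j \to \infty} \theta(P_j) = \tau$, as claimed.

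The main obstacle is the monotonicity step for $\theta$ through the parabolic phase. Since $W^j$ acts in our accelerated system as a single $\mathcal T$-transition, Lemma~\ref{l:relTheta} does not directly resolve it into individual ratios; the decomposition of $W$ into $\mathcal W_n$-generators, together with a uniform-in-$j$ analysis showing the intermediate orbit stays in $[-\tau, \varepsilon_0)$, should carry the argument through.
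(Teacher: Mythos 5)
Your treatment of the limit is essentially the paper's: $M_2\cdot x_j$ is confined to $\Delta'_{-j}$, which shrinks to $-\tau$, so $x_j\to M_2^{-1}\cdot(-\tau)=1/(1-\tau)$, $y_j\to L_{2n-5}$, and $P_\infty$ sits on the level curve $\theta=\tau$; the identification $y_j=-1/x_j^{*}$ also matches. But your existence step has a gap: hyperbolicity of $V_j=M_1^{n-3}W^jM_2$ produces two real fixed points of the M\"obius action, yet nothing in that argument places either of them inside the rank-$(n-1)$ cylinder on which $\mathcal T^{n-1}$ is actually given by $V_j$. The paper gets existence from a covering argument: the relevant subinterval is carried by $M_2M_1^{n-3}$ onto $[-\tau,\varepsilon_{n-1})\supset\Delta'_{-j}$, and $W^j\cdot\Delta'_{-j}=[\varepsilon_0,0)$ covers the starting interval, so the composite maps the cylinder onto a superset of itself and has a fixed point there. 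You need something of this kind; "the constraint $x_j\in\Delta'_2$ selects the repelling one" presupposes what is to be shown.

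The more serious gap is in the minimum claim. By Lemma~\ref{l:relTheta}, $\theta$ increases at a step exactly when the image point lies below the line $y=-x$ and decreases once the orbit has crossed above it. The point $\mathcal T^2(P_j)$ has $x$-coordinate near $\phi_{n-1}=1-\tau$ and $y$-coordinate at most $L_1=1/\tau$, so it lies well below $y=-x$; hence $\theta$ keeps \emph{increasing} through the first applications of $M_1$ and only turns around when the orbit crosses $y=-x$ (already for $n=5$ the first of the two terminal $M_1$-steps increases $\theta$). So your unimodal pattern -- maximum at the end of the parabolic phase, decrease throughout the $M_1$-phase -- is false. The true obstruction is the single uncontrolled $W^j$-step from $\mathcal T(P_j)$ to $\mathcal T^2(P_j)$, and your proposed remedy (factor $W^j$ into generators, show $\theta$ increases at every sub-step, with all intermediate $x$-coordinates in $[-\tau,\varepsilon_0)$) fails concretely: the intermediate points are the $\mathcal S$-orbit, whose $x$-coordinates track the $\phi_i$ and range over most of $\mathbb I$, and whose $y$-coordinates hug $y=-1/x$, which for $|x|<1$ lies above $y=-x$, so $\theta$ actually decreases at some of those sub-steps. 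The paper sidesteps the parabolic phase entirely: it shows $\theta(\mathcal T^2(P_j))>\tau$ by a direct computation at $j=1$ plus monotonicity in $j$ (using $\partial\theta/\partial x<0$, $\partial\theta/\partial y>0$), and $\theta(P_j)<\tau$ because $P_j$ lies to the right of and below $P_\infty$; combined with unimodality of $\theta$ on the segment from $\mathcal T^2(P_j)$ back to $P_j$ and the controlled $M_2$-step out of $P_j$, this pins the minimum at $P_j$. Without some substitute for that comparison across the $W^j$-step, your argument does not establish where the minimum is realized.
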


\begin{proof}  For any $x \in [\varepsilon_0, \varepsilon_1)$,  we have that $f^{n-3}(x) = M_{1}^{n-3}\cdot x$.   Furthermore, amongst this set of $x$, there exists a subinterval such that $f^{n-3}(x) \ge  1/(1-\tau)$,  thus for which 
$f^{n-2}(x) = M_2 M_{1}^{n-3}\cdot x\,$.  Since the image under $M_2M_{1}^{n-3}$ of this subinterval is $[-\tau, \varepsilon_{n-1})$, there are smaller intervals upon which $f^{n-1}(x) = W^jM_2 M_{1}^{n-3}\cdot x\,$.   The image of each $\Delta'_{-j}$ under $W^j$ is  $[\varepsilon_0,0)$, thus this image covers all of our initial interval.   In particular, there is certainly a fixed point of $ W^jM_2 M_{1}^{n-3}$ there.    Conjugating, we find that there is an $x_j$ of the type announced.   One now easily verifies that there is a point $P_j$ fixed by  $\mathcal T^{n-1}$, with $P_j = (x_j, y_j)$ and $y_j \in [0, L_{2n-5}]$. \\  

  We now claim that $\lim_{j\to \infty}\, P_j = (1/(1-\tau), L_{2n-5}\,)$.    Indeed,   as $j$ increases,  the subinterval $\Delta'_{-j}$ tends to the left.   Since $M_2\cdot x$ is an increasing function,  we have that the $x_j$ are thus decreasing,  with limit value $M_{2}^{-1}\cdot (-\tau) = 1/(1-\tau)$.    Similarly, the images under $\mathcal T$ of $\Delta'_{-j}\times [0, \tau/(1+ \tau^2)\,]$ increase in height with $j$, with heights limiting to $L_1$.     Therefore,  the $y_j$ limit in value to $N_{1}^{n-3}\cdot L_1 = L_{2n-5}$.
Note that $\theta(\,1/(1-\tau), L_{2n-5}\,)= \tau$. \\   

   It is clear that each $y_j$ equals $-1/z$ with $z$ one of the two fixed points of the hyperbolic $M_{1}^{n-3}W^jM_2$.   Since $\Gamma$ contains no points on the curve $-1/x$,  it must in fact be that $y_j = -1/x^{*}_{j}\,$, where  ${}^*$ denotes the ``conjugate'' fixed point.       A direct calculation shows that $\theta(\,\mathcal T^2(P_1)\,) > \tau$. \\   
   
    For $j>1$,  $\mathcal T^2(P_j)$ lies to the left and higher then $\mathcal T^2(P_1)$; use of partial derivatives shows that $\theta(\,\mathcal T^2(P_j)\,) > \theta(\,\mathcal T^2(P_1)\,)\,$.   For any $j$,  as in the proof of Proposition ~\ref{p:flushing},   Lemma ~\ref{l:relTheta} implies that along the $\mathcal T$-orbit of $\mathcal T^2(P_j)$ until $y= -x$, the $\theta$-values increase,  thereafter they decrease  until reaching $\theta(P_j)$.   From this,  the minimal value on the orbit of $P_j$ is taken at either $P_j$ or $\mathcal T^2(P_j)$.     (Note that the lemma does not apply for an application of $W^j$.)   But,  at $\mathcal T^2(P_j)$ the $\theta$-values are above $\tau$, and at $P_j$ they are below.    The result follows.  
\end{proof}

\bigskip 
  
\begin{proof}   (of Proposition ~\ref{p:noLongBadness}): \quad First, since  $\mathcal T$ is ergodic,  the Jager result  Proposition ~\ref{p:unifDistrib} shows that there are   $x \in \mathbb I_n$ such that the $\mathcal T$-orbit of $(x,0)$ meets the set of points $Q$ such that all of $Q, \dots, \mathcal T^{n-3}(Q)$ lie in $\mathcal D$.  Lemma ~\ref{l:inDanger} shows that  such an $x$ has $n-2$ consecutive   $\Theta_m$ values greater than $\tau$.  

Second,   let the point $P_j$ as above have coordinates $(x_j, y_j)$.   Then $f^{n-2}(x_j) = x_j$ and $\mathcal T^{m(n-2)}(x_j,0)$ converges to $P_j$  (with increasing $y$-values),  showing the optimality in the statement of the theorem.
\end{proof}
    
\section{Transcendence results} 

A basic question one can ask about a given continued fraction algorithm is if it can be used to develop criteria for determining whether or not a real number $\alpha$ is transcendental. This question was first addressed for the regular continued fractions by E. Maillet  \cite{M}, H. Davenport and K.F. Roth \cite{DR} , and by A. Baker \cite{Ba}.  Bugeaud and Adamczewski improved upon their work in \cite{AB1} and \cite{AB2}.  They showed that if $\xi$ is an algebraic irrational number with convergents $p_n/q_n$, then the sequence $\lbrace q_n \rbrace _{n \geq 1}$ cannot increase too quickly.  Y. Bugeaud, P. Hubert and T. Schmidt also obtained similar transcendence results for the Rosen continued fractions in \cite{BHS}. 

In this section, we prove a transcendence theorem for the Ward continued fractions in the spirit of these past results. 

We recall the following theorem stated by Roth and proved by LeVeque, see Chapter 4 of  \cite{B}.  Note that if  $\alpha$ is an algebraic number, then its {\em naive height}, denoted by $H(\alpha)$,  is the largest absolute value of the coefficients of its minimal polynomial over $\mathbb Z$.)

\begin{Thm} (Roth-LeVeque)  Let $K$ be a number field and $\zeta$ a real algebraic number not in $K$. Then for any $\epsilon > 0$, there exists a positive constant $c(\zeta,K,\epsilon)$ such that 
\[ |\zeta - \alpha| > \frac{ c(\zeta,K,\epsilon)}{H(\alpha)^{2 + \epsilon}} \] holds for every $\alpha \in K$.  
\end{Thm}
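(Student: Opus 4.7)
The plan is to run the Thue--Siegel--Roth machinery of auxiliary polynomials in many variables, adapted to the number field $K$ as in LeVeque's refinement of Roth. Argue by contradiction: suppose there is $\epsilon > 0$ such that infinitely many $\alpha \in K$ satisfy $|\zeta - \alpha| \le H(\alpha)^{-(2+\epsilon)}$. From such a sequence I would extract $m$ approximations $\alpha_1, \ldots, \alpha_m \in K$ whose naive heights $H_i := H(\alpha_i)$ grow so rapidly that $\log H_{i+1}/\log H_i$ exceeds a threshold depending on $\epsilon$, $m$, and $[K:\mathbb Q]$. The integer $m$ itself is chosen large at the outset in terms of $\epsilon$ and the degree.

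Next, use Siegel's lemma to construct a nonzero auxiliary polynomial $P(x_1, \ldots, x_m) \in \mathbb Z[x_1, \ldots, x_m]$ of partial degrees $r_i$ chosen inversely proportional to $\log H_i$ (so that typical monomials have comparable magnitude when evaluated at the $\alpha_i$) and of controlled coefficient height, subject to the requirement that $P$ vanishes at the diagonal point $(\zeta, \ldots, \zeta)$ to high weighted order in the sense of Roth's ``index'', say at least $m(1-\delta)/2$ for small $\delta > 0$. Siegel's lemma applies because the number of linear vanishing conditions is smaller than the number of free coefficients, and standard linear algebra bounds the resulting integer solution in size.

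The heart of the argument is to compare two competing estimates for a carefully chosen partial derivative of $P$ evaluated at $(\alpha_1, \ldots, \alpha_m)$. The upper bound comes from Taylor-expanding at $(\zeta, \ldots, \zeta)$: the high vanishing order of $P$ combined with each $\alpha_i$ being within $H_i^{-(2+\epsilon)}$ of $\zeta$ yields a value whose size is much smaller than the reciprocal of any fixed power of $H_1$. The lower bound comes from regarding the value as an element of $K$: clearing denominators and taking the norm $N_{K/\mathbb Q}$ yields a nonzero rational integer, hence of absolute value at least one, whose archimedean sizes at the remaining embeddings of $K$ are controlled via the $H_i$ and the height of $P$. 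The main obstacle is verifying that the chosen partial derivative does not vanish at $(\alpha_1, \ldots, \alpha_m)$; this is Roth's lemma, proved by induction on $m$ via Wronskian-type determinant identities that exploit the rapid growth gap between successive $\log H_i$. Once non-vanishing is secured, the two bounds contradict each other for $m$ large enough, yielding the conclusion.

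The passage from Roth's original theorem (where $K = \mathbb Q$) to LeVeque's version is largely bookkeeping: the archimedean size of $P(\alpha_1, \ldots, \alpha_m)$ must be tracked at each of the $[K:\mathbb Q]$ embeddings of $K$, and the naive height on $K$ replaces the denominator of a rational approximant. The product formula on $K$, together with the hypothesis $\zeta \notin K$ (which ensures that $P$ does not vanish on the relevant orbit for trivial reasons), lets the upper-versus-lower-bound clash go through. With these height conventions in place, the resulting lower bound $|\zeta - \alpha| \ge c(\zeta, K, \epsilon)/H(\alpha)^{2+\epsilon}$ then holds for all $\alpha \in K$, with the explicit constant $c$ absorbing the dependence on the chosen parameters $m$, $\delta$, and the finitely many ``bad'' approximations one discards at the outset.
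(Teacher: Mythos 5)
The paper does not prove this statement at all: it is quoted as the Roth--LeVeque theorem with a pointer to Chapter~4 of Bugeaud's book \cite{B}, and is used purely as a black box in the transcendence argument of Section~5. Your proposal, by contrast, sketches the actual Thue--Siegel--Roth machinery in LeVeque's number-field form, and as an outline it is faithful to the standard proof: the contradiction hypothesis, the extraction of $m$ approximants with a large gap condition on $\log H_{i+1}/\log H_i$, the auxiliary polynomial with partial degrees $r_i$ inversely proportional to $\log H_i$ produced by Siegel's lemma, the index threshold $m(1-\delta)/2$ at the diagonal point $(\zeta,\dots,\zeta)$, Roth's lemma (Wronskian induction) to secure a non-vanishing low-order derivative, and the clash between the Taylor-expansion upper bound and the Liouville-type lower bound obtained by clearing denominators and taking $N_{K/\mathbb Q}$; the passage to all $\alpha\in K$ by absorbing finitely many exceptions into the constant (legitimate precisely because $\zeta\notin K$) is also correctly noted. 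Be aware, however, that what you have written is a roadmap rather than a proof: every genuinely difficult step --- the lattice-point count that makes Siegel's lemma applicable, the large-deviations estimate behind the index condition, the height bookkeeping across all embeddings of $K$, and above all Roth's lemma itself --- is named but not executed, and these are exactly where the depth of the theorem resides. For the purposes of this paper, citing the result as the authors do is the appropriate course; if you wanted a self-contained treatment you would essentially be reproducing a chapter of \cite{B}.
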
 

\begin{Lem} \label{Lem:qineq} There exists a constant $c$ such that for every real number $x$ and for each $m \geq 0$,  

\[ |x-p_m/q_m| < c/{q_mq_{m+1}}.\]
\end{Lem}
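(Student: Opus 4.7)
The plan is to combine the standard ``consecutive approximants'' inequality from classical continued fraction theory with the uniform control over $\Theta_m$ that the natural extension domain $\Gamma$ provides.

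First, I would observe that every generator appearing in our algorithm---each $A^{-k}C$ for $k\ge 1$ and each power $W^j$ for $j\ge 1$---has determinant one; so does the product $M_{k_{m-1}}\cdots M_{k_0}$. The standard determinant identity $p_m q_{m+1} - p_{m+1} q_m = \pm 1$ then gives
\[
\left|\frac{p_{m+1}}{q_{m+1}}-\frac{p_m}{q_m}\right|=\frac{1}{q_m q_{m+1}}.
\]

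Next, by \eqref{e:thetaM}, $\Theta_m=|\theta(\mathcal T^m(x,0))|$ for $\theta(t,v) = -t/(1+tv)$. Statement (ii) of Proposition~\ref{p:domNatExt} gives that $\Gamma$ is bounded away from the curve $1+tv = 0$; hence $\theta$ is bounded on $\Gamma$, and there is a finite constant $C$ with $\Theta_m \le C$ uniformly in $m$ and in the $f$-irrational $x$. In particular
\[
\left|x-\frac{p_{m+1}}{q_{m+1}}\right|=\frac{\Theta_{m+1}}{q_{m+1}^{2}}\le \frac{C}{q_{m+1}^{2}}.
\]
Furthermore, the second coordinate of $(t_{m+1},v_{m+1})\in\Gamma$ satisfies $v_{m+1}=q_m/q_{m+1}\le\tau$; thus $q_{m+1}\ge q_m/\tau$, which gives $1/q_{m+1}^{2}\le\tau/(q_m q_{m+1})$.

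Putting these together via the triangle inequality, I obtain
\[
\left|x-\frac{p_m}{q_m}\right|\le\left|x-\frac{p_{m+1}}{q_{m+1}}\right|+\left|\frac{p_{m+1}}{q_{m+1}}-\frac{p_m}{q_m}\right|\le\frac{C\tau+1}{q_m q_{m+1}},
\]
so the lemma will follow with $c=C\tau+1$. The one point that requires care---which is not so much an obstacle as a bookkeeping check---is that both of the uniform bounds (on $\Theta_m$ and on $v_{m+1}$) survive the accelerated $W^j$-steps. But both quantities depend only on the location of $\mathcal T^m(x,0)$ in $\Gamma$, not on the local form of the map; and the determinant identity automatically includes $W^j$ since each $W^j$ is itself a product of determinant-one restricted matrices.
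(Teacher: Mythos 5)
Your argument is correct, but it reaches the bound by a different decomposition than the paper. The paper's proof is a direct computation: applying \eqref{e:xAndConvMatrix} with $m$ replaced by $m+1$ and using that the convergent matrix has determinant one, one obtains the exact identity
\[
\Bigl|\,x-\frac{p_m}{q_m}\Bigr| \;=\; \frac{1}{q_mq_{m+1}\,\bigl|\,1+t_{m+1}v_{m+1}\bigr|}\,,
\]
after which the single geometric input---statement (ii) of Proposition~\ref{p:domNatExt}, i.e.\ that $\Gamma$ stays at distance $d>0$ from the hyperbola $1+xy=0$---gives the lemma with $c=1/d$. Your triangle-inequality route through the $(m+1)$-st approximant, $x-p_m/q_m=(x-p_{m+1}/q_{m+1})+(p_{m+1}/q_{m+1}-p_m/q_m)$, in effect splits that same identity into two pieces, and in exchange needs two uniform facts where the paper needs one: the bound $\Theta_{m+1}\le C$ (which again rests on $|1+tv|\ge d$ on $\Gamma$, together with $|t|\le\tau$) and the separate observation $v_{m+1}=q_m/q_{m+1}\le\tau$ (the heights of $\Gamma$ are at most $R=\tau$). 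Both are legitimate, and your remark that these bounds depend only on where $\mathcal T^{m+1}(x,0)$ sits in $\Gamma$---not on whether the step was an ordinary $M_k$ or an accelerated $W^j$---is exactly the right check, so the proof goes through; it merely produces a larger constant $C\tau+1=\tau^2/d+1$ than the $1/d$ of the direct computation. One cosmetic point common to both arguments: writing $q_mq_{m+1}$ without absolute values requires $q_m$ and $q_{m+1}$ to have the same sign, which follows from $v_{m+1}\ge 0$ on $\Gamma$. Note also that the paper's own proof says ``$\Omega$'' where it should say ``$\Gamma$'' (the larger region $\Omega$ actually touches the curve $y=-1/x$, which is why it has infinite measure); you cite the correct region.
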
 

\begin{proof} The region $\Omega$ is bounded away from the curve $y=-1/x$. Let $d$ be the distance from this curve to $\Omega$, and let $c=1/d$. Then 

\begin{align*}  |x-p_m/q_m | &= \frac{1}{q_m^2 |\frac{q_{m+1}}{q_m} + f^{m+1}(x)|} \\
&= \frac{1}{q_m q_{m+1}|1+\frac{q_m}{q_{m+1}}f^{m+1}(x)|}  \\ 
&=\frac{1}{q_m q_{m+1}|1+v_{m+1}t_{m+1}|}  \\ 
&< \frac{1}{d q_m q_{m+1}}=\frac{c}{q_m q_{m+1}}\,. 
\end{align*} 

In the third equality, we used that the definition $\mathcal T^{m+1}(x,0) = (t_{m+1}, v_{m+1})$,  and the identity $v_{m+1}=q_m/q_{m+1}$. 

\end{proof} 

We also have the following lemma, which appears in entirely analogous form as Lemma 3.2 in \cite{BHS}, their proof goes through here.   

\begin{Lem} \label{l:heightBd} Let $d$ denote the field extension degree $[\mathbb Q(\lambda):\mathbb Q]$.  If $\xi \notin \mathbb Q(\lambda)$ is  a real algebraic number   that is $f$-irrational, with $f$-approximants $p_m/q_m$, then there exist constants $k=k(\lambda)$ and $m_0 = m_0(x)$ so that for all $m \geq m_0$, 

\[H(p_m/q_m) \leq kq_m^d\] 
\end{Lem}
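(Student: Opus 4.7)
The plan is to exhibit an explicit degree-$d$ integer polynomial that annihilates $\alpha_m := p_m/q_m$, then bound the naive height of $\alpha_m$ in terms of that polynomial's coefficients, and finally control these coefficients via the Galois conjugates $p_m^\sigma$ and $q_m^\sigma$. Since $\mathcal W_n \subset \mathrm{PSL}_2(\mathcal O_K)$, both $p_m$ and $q_m$ lie in $\mathcal O_K$, so the polynomial
\[
P(X) := \prod_\sigma \bigl(q_m^\sigma X - p_m^\sigma\bigr),
\]
where $\sigma$ runs over the $d$ embeddings $K \hookrightarrow \mathbb C$, is symmetric under the Galois action, hence lies in $\mathbb Z[X]$, and vanishes at $\alpha_m$. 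The minimal polynomial of $\alpha_m$ over $\mathbb Z$ therefore divides $P$.

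Next, Mahler's bound on the naive height of a factor of an integer polynomial yields $H(\alpha_m) \le 2^d H(P)$, and expanding $P(X) = \sum_j a_j X^j$ gives
\[
|a_j| \le \binom{d}{j}\prod_\sigma \max\bigl(|p_m^\sigma|, |q_m^\sigma|\bigr).
\]
For the identity embedding, convergence $p_m/q_m \to \xi$ together with $\xi$ lying in (a neighborhood of) $\mathbb I_n = [-\tau,0)$ gives $\max(|p_m|,|q_m|) \le (1+\tau)\,q_m$ for all $m \ge m_0(\xi)$. The lemma is thus reduced to showing $\max(|p_m^\sigma|,|q_m^\sigma|) \le c_\sigma\, q_m$ for each non-identity embedding $\sigma$, with constants $c_\sigma$ depending only on $\lambda$.

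The hard part will be this last bound, and it is the direct analog of Lemma~3.2 of \cite{BHS} for the Rosen continued fractions; I would transfer that proof. Its underpinning is that every matrix occurring in $\gamma_m = M_{k_m}\cdots M_{k_1}$, namely $M_k = A^{-k}C = \begin{pmatrix}1-k\tau & -1\\ 1 & 0\end{pmatrix}$ for $k \ge 1$, or $W^j = I + j(W-I)$ with $(W-I)^2 = 0$ on the accelerated interval, has entries that are low-degree polynomials in $\tau = 1 + 2\cos(\pi/n)$. Since each $\tau^\sigma = 1 + 2\cos(a\pi/n)$ is bounded in absolute value by $3$, the norms $\|M_k^\sigma\|$ and $\|(W^j)^\sigma\|$ grow in $k$ or $j$ at the same rates as their identity-embedding counterparts. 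The main obstacle is that $\mathcal W_n$ is not discrete in the non-identity factors of $\mathrm{PSL}_2(\mathbb R)^d$, so a term-by-term comparison of $\|\gamma_m^\sigma\|$ with $\|\gamma_m\|$ is unavailable a priori; the \cite{BHS} argument circumvents this by exploiting the specific word structure produced by the continued-fraction algorithm together with the uniform per-generator bounds, and that argument transfers to our setting. Combining the three steps yields $H(\alpha_m) \le k(\lambda)\,q_m^d$ for all $m \ge m_0(\xi)$.
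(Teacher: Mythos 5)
Your overall reduction is the right one, and it is the same route the paper intends: form the norm polynomial $P(X)=\prod_\sigma\bigl(q_m^\sigma X-p_m^\sigma\bigr)\in\mathbb Z[X]$, pass from $H(P)$ to $H(p_m/q_m)$ by Gelfond/Mahler, bound the coefficients of $P$ by $\prod_\sigma\max(|p_m^\sigma|,|q_m^\sigma|)$, handle the identity embedding via $p_m/q_m\to\xi\in[-\tau,0)$, and reduce everything to the estimate $\max(|p_m^\sigma|,|q_m^\sigma|)\le c_\sigma\,q_m$ for the nontrivial embeddings. The paper gives no more detail than ``the proof of Lemma 3.2 of \cite{BHS} goes through here,'' so up to that point you are aligned with it.

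The gap is in your stated reason why that last estimate transfers. You attribute it to uniform per-generator bounds $\|M_k^\sigma\|\asymp\|M_k\|$ combined with the word structure of the algorithm. That is not the mechanism, and by itself it fails: submultiplicativity only gives $\|\gamma_m^\sigma\|\le\prod_i\|M_{k_i}^\sigma\|$, and there is no a priori comparison between that product and the single entry $q_m$ of $\gamma_m$ --- precisely because, as you yourself note, the conjugate embeddings of $\mathcal W_n$ are indiscrete, $\|\gamma_m^\sigma\|$ could in principle grow exponentially faster than $\|\gamma_m\|$. The input that actually makes the \cite{BHS} argument work, and which the paper states explicitly immediately after the lemma, is the \emph{domination of conjugates} property of these triangle groups (Corollary 5 of \cite{SW} combined with \cite{CW}): for every $M\in\mathcal W_n$ with $|\mathrm{tr}(M)|>2$ one has $|\mathrm{tr}(M)|\ge|\sigma(\mathrm{tr}(M))|$ for every embedding $\sigma$ of $\mathbb Q(2\cos\pi/n)$. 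The \cite{BHS} proof extracts the bounds on $|p_m^\sigma|,|q_m^\sigma|$ by expressing these entries in terms of traces of suitable hyperbolic products of $\gamma_m$ with fixed group elements and then applying this inequality; it is an arithmetic fact about semi-arithmetic Fuchsian groups admitting modular embeddings, not a consequence of bounds on the individual generators. Your proposal needs this ingredient named and invoked; without it the transfer of Lemma 3.2 of \cite{BHS} is not justified.
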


It is important to note, however, that the result from \cite{BHS} depends on the fact their continued fraction algorithms arise from Fuchsian triangle groups, as do our algorithms as well.  These groups have the following {\em domination of conjugates} property, by combining Corollary 5 of  \cite{SW} with the main result of \cite{CW}.

\begin{Thm}(Wolfart {\em et al.})  For any $M \in \mathcal W_n$ whose trace is of absolute value greater than two, we have 
$$ | tr(M)| \geq |\sigma(tr(M))|$$ 
where $\sigma$ is any field embedding of $Q(2\cos(\pi/n))$.  
\end{Thm}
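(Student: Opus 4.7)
The plan is to realize $\mathcal W_n$ inside the norm-one units of a quaternion algebra $A$ over the trace field $K = \mathbb Q(\lambda)$, and then exploit the fact that at every Galois infinite place of $K$ other than the given real embedding, $A$ becomes the Hamilton quaternions, so trace values at those places are forced into the compact interval $[-2,2]$.

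\textbf{Step 1: Embedding into a quaternion algebra.} I would first apply the main theorem of \cite{CW} to realize $\mathcal W_n \subset \text{PSL}_2(\mathbb R)$ as a subgroup of $A^1/\{\pm 1\}$, where $A$ is a quaternion algebra over $K$, uniquely determined by $\mathcal W_n$ and its invariant trace field. The key inputs are that the invariant trace field of the $(3,n,\infty)$ triangle group coincides with $K=\mathbb Q(\lambda)$ and that the presence of torsion of orders $3$ and $n$, together with one cusp, pins down the local invariants of $A$ at the finite places.

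\textbf{Step 2: Ramification at the Galois conjugate places.} I would then invoke Corollary 5 of \cite{SW}, which in this context asserts that for \emph{every} embedding $\sigma : K \hookrightarrow \mathbb R$ other than the identity embedding used to realize $\mathcal W_n$ in $\text{PSL}_2(\mathbb R)$, the completion $A \otimes_{K,\sigma} \mathbb R$ is the Hamilton division algebra $\mathbb H$. In other words, $A$ is ramified at all ``other'' real places of $K$. This is the heart of the domination phenomenon and is exactly what distinguishes triangle groups (and more generally semi-arithmetic Fuchsian groups of ``dominated type'') from arbitrary finitely generated subgroups of $\text{PSL}_2(K)$.

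\textbf{Step 3: Compactness forces the trace bound.} Given an element $M \in \mathcal W_n$ with $|\text{tr}(M)|>2$, view $M$ as a norm-one element of $A$. Under a non-identity embedding $\sigma$, the image $M^\sigma$ lies in the norm-one elements of $\mathbb H$, i.e., on the unit sphere $S^3 \cong \text{SU}(2)$. Every unit quaternion has the form $\cos\theta + \sin\theta\cdot u$ for a unit pure quaternion $u$, so its reduced trace $2\cos\theta$ lies in $[-2,2]$. Since the reduced trace on $A$ restricts to the matrix trace on the embedded $\mathcal W_n$, this gives $|\sigma(\text{tr}(M))| \leq 2 < |\text{tr}(M)|$, which is the desired inequality.

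\textbf{Main obstacle.} The substantive work is concentrated in Steps 1 and 2: producing the quaternion algebra $A$ for the specific family $(3,n,\infty)$ with $n\geq 4$ and verifying its ramification data. For $n$ not on Takeuchi's arithmetic list the group $\mathcal W_n$ is only semi-arithmetic, so one cannot appeal to classical arithmeticity; instead one must use the rigidity of the triangle signature together with the special cyclotomic structure of $K=\mathbb Q(2\cos(\pi/n))$ to force $A$ to be ramified at every non-identity real place. Once those two facts are in hand, Step 3 is a one-line compactness argument.
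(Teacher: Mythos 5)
Your Step 1 is the right starting point, but Steps 2 and 3 contain a fatal error, and the mechanism you propose is not the one that the cited results actually provide. (The paper itself gives no proof of this theorem; it simply quotes it as the combination of Corollary 5 of \cite{SW} with the main result of \cite{CW}, so the question is whether your reconstruction of what those references deliver is correct. It is not.)

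First, the quaternion algebra attached to $\mathcal W_n$ cannot be ramified at any place at all. The group has signature $(3,n,\infty)$ and contains parabolic elements (e.g.\ $A$, fixing the cusp $\infty$); a nontrivial unipotent of reduced trace $2$ and norm $1$ cannot exist in a division quaternion algebra (from $(u-1)^2=0$ one gets $u=1$), so the algebra generated by $\mathcal W_n$ (or by $\mathcal W_n^{(2)}$) over its trace field is the split algebra $M_2(K)$. Hence $A\otimes_{K,\sigma}\mathbb R\cong M_2(\mathbb R)$ for every real embedding $\sigma$, never the Hamilton quaternions. Second, even setting the cusp aside, if your Step 3 worked it would prove the much stronger bound $|\sigma(\mathrm{tr}(M))|\le 2$ for all $M$ and all non-identity $\sigma$; by Takeuchi's characterization of arithmeticity this would force $\mathcal W_n$ to be arithmetic. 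But for $n\ge 4$ the $(3,n,\infty)$ groups are arithmetic only for $n=4,6$ (a non-cocompact arithmetic Fuchsian group is commensurable with $\mathrm{PSL}_2(\mathbb Z)$ and has invariant trace field $\mathbb Q$, whereas here it is $\mathbb Q(\cos(2\pi/n))$). For non-arithmetic $n$ the sets $\{\sigma(\mathrm{tr}(M))\}$ are in fact unbounded, so no compactness argument of this type can succeed.

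The actual content of \cite{CW} combined with \cite{SW} is analytic, not algebraic: the triangle group admits a \emph{modular embedding}, i.e.\ for each embedding $\sigma$ there is a holomorphic map $F_\sigma:\mathcal H\to\mathcal H$ satisfying $F_\sigma(M\cdot z)=\sigma(M)\cdot F_\sigma(z)$ for all $M\in\mathcal W_n$. By the Schwarz--Pick lemma $F_\sigma$ does not increase hyperbolic distance, so for hyperbolic $M$ the translation length of $\sigma(M)$ is at most that of $M$; since $|\mathrm{tr}|=2\cosh(\ell/2)$ is increasing in the translation length, this yields $|\sigma(\mathrm{tr}(M))|\le|\mathrm{tr}(M)|$ when $\sigma(M)$ is hyperbolic, and the inequality is trivial (bounded by $2<|\mathrm{tr}(M)|$) when $\sigma(M)$ is elliptic or parabolic. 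If you want to supply a proof rather than a citation, this Schwarz-lemma argument is the one to write down; the quaternionic ramification route is closed off by the presence of cusps and by non-arithmeticity.
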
 

\bigskip 
We are now ready to give the proof for our second main theorem.   We use the standard notation of $\gg$ to denote inequality with implied constant.

\begin{proof}(of Theorem ~\ref{t:Transc}).    Fix $\varepsilon >0$. The Roth-LeVeque Theorem implies 

\[ |\xi - p_m/q_m | >> H(p_m/q_m)^{2 + \varepsilon}, n\geq 1. \]    And from Lemma~\ref{l:heightBd}, we have that for $m \geq m_0= m_0 (\xi)$, 

\[ |\xi - p_m/q_m| >> q_m^{-d(2 + \varepsilon)}. \] 

Lemma~\ref{Lem:qineq} thus implies that there exists a constant $c_1$ such that  \[ q_{m+1} < c_1 \, q_m^{d(2 + \varepsilon) -1} \] for all $m \geq m_0$.  On the other hand, for each $j < m_0$, there exists an $l_j$ such that $q_j < l_j \, q_{j-1}^{ d(2 + \epsilon)-1}$. If we let $c_2= \max \lbrace c_1, l_1, \dots , l_{m_0 -1} \rbrace$, then for all $m > 1$, 

\[ q_{m+1} < c_2 \, q_m^{D(2 + \varepsilon) -1}. \]  Let $a=d(2 + \epsilon)-1$.  Iterating  this inequality once, we have that 

\[ q_{m+1} <  c_2\, q_m^{a} < c_2\, (c_2\, q_{m-1}^a)^a < (c_2 \, q_{m-1})^{a^2} \]  
and continuing, $q_{m+1} < (c_2\, q_1)^{a^m}$ 

Since $q_{m}/q_{m+1} \leq \tau$ for all $m \geq1$,  $q_{m+1} \geq ( 1/\tau) q_m$ and so by letting $c_3=c_2\, q_1$,  we have that $\log q_m < a^n \log(c_3)$.  It then follows that 
\[ \limsup_{ n \to \infty} \frac{\log \log q_m}{m} < \log(d(2+ \epsilon) - 1). \]  
If we let $\varepsilon$ go to zero, we have that every algebraic number satisfies 
\[ \limsup_{ m \to \infty} \frac{\log \log q_m}{m} \leq \log(2d-1). \] 

\end{proof}


\begin{thebibliography}{Thurst88}   

\bibitem{AB1}  B. Adamczewski and Y. Bugeaud, {\em On the complexity of algebraic numbers II,  Continued
fractions}, Acta Math. 195 (2005), 1--20.

\bibitem{AB2}  B. Adamczewski and Y. Bugeaud, {\em On the Maillet-Baker continued fractions}, J. Reine Angew. Math. 606
(2007), 105--121.

\bibitem{A} R. Adler, {\em Continued fractions and Bernoulli trials}, in:   Ergodic Theory (A Seminar), J. Moser, E. Phillips and S. Varadhan, eds., Courant Inst. of Math. Sci. (Lect. Notes 110), 1975, New York.  
 
\bibitem{AH} P. Arnoux and P. Hubert, {\em Fractions continues 
sur  les surfaces de Veech}, J. Anal. Math. 81 (2000),  35--64.

\bibitem{AS} P. Arnoux and T. A. Schmidt, {\em Veech surfaces with non-periodic directions in the trace field},     J. Mod. Dyn.  3  (2009),  no. 4, 611--629.
 
\bibitem{Ba} A. Baker,{\em Continued fractions of transcendental numbers}, Mathematika 9 (1962), 1--8.

\bibitem{BBDK}
J. Barrionuevo,  R. Burton, K. Dajani, C. Kraaikamp,  
{\em Ergodic properties of generalized L\"uroth series},
Acta Arith. 74 (1996), no. 4, 311--327. 

\bibitem{BJW} W. Bosma, H.  Jager  and F. Wiedijk,  
\emph{Some metrical observations on the approximation by continued
fractions}, Nederl.\ Akad.\ Wetensch.\ Indag.\ Math.\ {\bf 45}
(1983), no. 3, 281--299.

\bibitem{B} Y. Bugeaud, 
{\em Approximation by algebraic numbers}, 
Cambridge Tracts in Mathematics, 160. Cambridge University Press, Cambridge, 2004.


\bibitem{BHS} Y. Bugeaud, P. Hubert and T. A. Schmidt,
{\em Transcendence with Rosen continued fractions},  preprint (2010):   
arXiv:1007.2050.

\bibitem{BKS} R. M. Burton, C. Kraaikamp and T. A. Schmidt,  
\emph{Natural extensions for the Rosen fractions}, Trans.\ Amer.\
Math.\ Soc.\ {\bf 352} (1999), 1277--1298.

 
\bibitem{CS} K. Calta and J. Smillie, {\em Algebraically periodic translation surfaces},   J. Mod. Dyn.  2  (2008),  no. 2, 209--248.

\bibitem{CW} P. Cohen and J. Wolfart,  
{\em Modular embeddings for some nonarithmetic Fuchsian groups},
Acta Arith. 56 (1990), no. 2, 93--110. 

 
\bibitem{DKS}  K. Dajani, C.  Kraaikamp and W. Steiner, \emph{Metrical theory for $\alpha$-{R}osen fractions},
 J. Eur. Math. Soc. (JEMS)   11 (2009), no. 6, 1259--1283.
 
\bibitem{DR} H. Davenport and K. F. Roth, { \em Rational approximations to algebraic numbers, Mathematika}, 
2 (1955), 160--167.

       
\bibitem{J} H. Jager, \emph{Continued fractions and ergodic theory},  in Transcendental number and related topics, RIMS Kokyuroku 599, Kyoto University,  Kyoto,  Japan, 55--59  (1986).

 
 
\bibitem{KS}
R. Kenyon and J. Smillie, {\em Billiards in rational-angled
triangles}, Comment. Mathem. Helv. \textbf{75}  (2000), 65--108.


\bibitem{K}C.~Kraaikamp, \emph{A new class of
continued fraction expansions}, Acta Arith.\ 57 (1991), no.\ 1,
1--39.

\bibitem{KNS}C. Kraaikamp, H.  Nakada, H. and T. A. Schmidt,    \emph{Metric and arithmetic properties of mediant-Rosen maps},   Acta Arith.  137  (2009),  no. 4, 295--324.

\bibitem{KrS} C. Kraaikamp and I. Smeets, {\em Approximation results for $\alpha$-Rosen fractions},   Unif. Distrib. Theory  5  (2010),  no. 2, 15--53.


\bibitem{KSS} C. Kraaikamp, I.  Smeets  and T. A. Schmidt,  \emph{Tong's spectrum for Rosen continued fractions},    J. ThŽor. Nombres Bordeaux  19  (2007),  no. 3, 641--661. 
 
\bibitem{M} E. Maillet, {\em Introduction \`a la th\'eorie des nombres transcendants et des propri\'et\'es
arithm\'etiques des fonctions}, Gauthier-Villars, Paris, 1906.
 

\bibitem{NIT} H.~Nakada, S.~Ito, and S.~Tanaka, \emph{On the invariant
measure for the transformations associated with
some real continued-fractions}, Keio Engrg.\ Rep.\ 30 (1977), no.\
13, 159--175.
 
\bibitem{Na} H. Nakada, \emph{On the Lenstra constant associated to the Rosen
continued fractions}, J. Eur. Math. Soc. (JEMS)  12  (2010),  no. 1, 55--70. 
 
\bibitem{Roh} V.A.\ Rohlin, \emph{Exact endomorphisms of
Lebesgue spaces}, Izv.\ Akad.\ Nauk SSSR Ser.\ Mat.\ \textbf{25}
(1961), 499--530. Amer.\ Math.\ Soc.\ Transl.\ Series 2, {\bf 39}
(1964), 1--36.


\bibitem {R} D. Rosen,  {\em A Class of Continued Fractions 
Associated with Certain Properly Discontinuous Groups}, 
{\em Duke Math. J.} 21 (1954), 549--563.

 

\bibitem{SW}  P. Schmutz Schaller and J. Wolfart,  
{\em Semi-arithmetic Fuchsian groups and modular embeddings}, 
J. London Math. Soc. (2) 61 (2000), no. 1, 13--24.


\bibitem{Schw} F.\ Schweiger, \emph{Ergodic theory of fibred
systems and metric number theory}. Oxford: Clarendon Press, 1995.


\bibitem{SU} J. Smillie and C. Ulcigrai,
{\em Beyond Sturmian sequences: coding linear trajectories in the regular octagon}, 
 Proc.  London Math. Soc., 2010 (published online: doi: 10.1112/plms/pdq018)
 

\bibitem{SU2} \bysame,
{\em Geodesic flow on the Teichm\"uller disk of the regular octagon,
cutting sequences and octagon continued fractions maps}, 
Trans.  AMS, to appear.

\bibitem{V} W.A. Veech, {\em Teichmuller curves in modular
space, Eisenstein series, and an application to triangular billiards},
Inv. Math. 97 (1989), 553 -- 583.

\bibitem{W}  C. Ward  {\em Calculation of Fuchsian groups associated to billiards in a rational triangle}, Ergodic Theory Dynam. Systems 18,  (1998), 1019--1042.

\end{thebibliography}
\end{document}